\documentclass{amsart}

\usepackage[enumitem,theorem,Rn,hyperref,frak,indicator]{paper_diening}

\usepackage{bbm}

\usepackage{microtype}
\usepackage{pgfplots}
  
\usepackage{tikz}
\usetikzlibrary{intersections,calc,arrows.meta,patterns,angles,quotes}
% \usetikzlibrary{positioning}

\providecommand{\px}{\ensuremath{{p(\cdot)}}}
\providecommand{\pdx}{\ensuremath{{p'(\cdot)}}}
\providecommand{\phix}{\ensuremath{{\phi(\cdot)}}}
\providecommand{\phidx}{\ensuremath{{\phi^*(\cdot)}}}

\title{New Examples on Lavrentiev Gap Using Fractals}
\dedicatory{Dedicated to Vasili\u{\i} Vasil'evich Zhikov}

\author{Anna Kh.~Balci}

\author{Lars Diening}

\address{Anna Kh.~Balci/Lars Diening, University Bielefeld, Universit\"atsstrasse 25, 33615
  Bielefeld, Germany.}
\email{akhripun@math.uni-bielefeld.de}
\email{lars.diening@uni-bielefeld.de}

\author{Mikhail Surnachev}
\address{Mikhail Surnachev, Keldysh Institute of Applied Mathematics (Russian Academy of Sciences),  Miusskaya sq. 4, 125047
  Moscow,  Russia.}
\email{peitsche@yandex.ru}

\thanks{This research was supported by the Russian Science Foundation
  under grant 19-71-30004 and the German Research
  Foundation (DFG) through the CRC 1283.
}

\keywords{Lavrentiev phenomen; nonlinear elliptic equations; variable exponent, double phase potential}

\subjclass[2010]{%
35J60, %   	Nonlinear elliptic equations
46E35, %   Sobolev spaces and other smooth functions,  traces, embeddings
35J20. %   	Variational methods for second-order elliptic equations
}

% \thanks{\textsuperscript{$\ast$}University Bielefeld, Universit\"atsstrasse 25, 33615 Bielefeld, Germany. Emails: \textsf{akhripun@math.uni-bielefeld.de} (Anna Kh.~Balci) and \textsf{lars.diening@uni-bielefeld.de} (Lars Diening)} 
% \thanks{\textsuperscript{$\dagger$} Keldysh Institute of Applied Mathematics (Russian Academy of Sciences),  Miusskaya sq. 4, 125047
% Moscow,  Russia.  Email: \textsf{peitsche@yandex.ru}}
% \thanks{\textsuperscript{$\ddagger$}\it{Corresponding author.}}

\begin{document}

\begin{abstract}
  Zhikov showed 1986 with his famous checkerboard example that
  functionals with variable exponents can have a Lavrentiev gap. For
  this example it was crucial that the exponent had a saddle point
  whose value was exactly the dimension.  In 1997 he extended this
  example to the setting of the double phase potential. Again it was
  important that the exponents crosses the dimensional threshold.
  Therefore, it was conjectured that the dimensional threshold plays
  an important role for the Lavrentiev gap. We show that this is not
  the case. Using fractals we present new examples for the Lavrentiev
  gap and non-density of smooth functions. We apply our method to the
  setting of variable exponents, the double phase potential and
  weighted $p$-energy.
\end{abstract}
\maketitle

%\tableofcontents

%% ------------------------------------------------------------
%% ------------------------------------------------------------
%% commands to draw fractal pictures

%% change opacity of p+ and p- regions!
\providecommand{\opac}{0.2}

\newcommand{\fullpattern}[1]{%
  %% main pattern, level 0
  #1
  
  \repeatpatternthree{#1}
  
    %% maint pattern, level -1
  \repeatpatternbig{#1}
}
  
\newcommand\repeatpatternbig[1]{%
  \begin{scope}[shift={(-1,0)},scale=3]
    #1
  \end{scope}
  \begin{scope}[shift={(1,0)},scale=3]
    #1
  \end{scope}
}

\newcommand{\repeatpatternone}[1]{%
  \begin{scope}[shift={(-1/3,0)},scale=1/3]
    #1
  \end{scope}
  \begin{scope}[shift={(1/3,0)},scale=1/3]
    #1
  \end{scope}
}

\newcommand{\repeatpatterntwo}[1]{%
  \begin{scope}[shift={(-1/3,0)},scale=1/3]
    #1
    \repeatpatternone{#1}
  \end{scope}
  \begin{scope}[shift={(1/3,0)},scale=1/3]
    #1
    \repeatpatternone{#1}
  \end{scope}
}

\newcommand{\repeatpatternthree}[1]{%
  \begin{scope}[shift={(-1/3,0)},scale=1/3]
    #1
    \repeatpatterntwo{#1}
  \end{scope}
  \begin{scope}[shift={(1/3,0)},scale=1/3]
    #1
    \repeatpatterntwo{#1}
  \end{scope}
}

\newcommand{\vfullpattern}[1]{%
  %% main pattern, level 0
  #1
  
  \vrepeatpatternthree{#1}
  
  %% maint pattern, level -1
  \vrepeatpatternbig{#1}
}

\newcommand\vrepeatpatternbig[1]{%
  \begin{scope}[shift={(0,-1)},scale=3]
    #1
  \end{scope}
  \begin{scope}[shift={(0,1)},scale=3]
    #1
  \end{scope}
}

\newcommand{\vrepeatpatternone}[1]{%
  \begin{scope}[shift={(0,-1/3)},scale=1/3]
    #1
  \end{scope}
  \begin{scope}[shift={(0,1/3)},scale=1/3]
    #1
  \end{scope}
}

\newcommand{\vrepeatpatterntwo}[1]{%
  \begin{scope}[shift={(0,-1/3)},scale=1/3]
    #1
    \vrepeatpatternone{#1}
  \end{scope}
  \begin{scope}[shift={(0,1/3)},scale=1/3]
    #1
    \vrepeatpatternone{#1}
  \end{scope}
}

\newcommand{\vrepeatpatternthree}[1]{%
  \begin{scope}[shift={(0,-1/3)},scale=1/3]
    #1
    \vrepeatpatterntwo{#1}
  \end{scope}
  \begin{scope}[shift={(0,1/3)},scale=1/3]
    #1
    \vrepeatpatterntwo{#1}
  \end{scope}
}

\section{Introduction}
\label{sec:introduction}

The Lavrentiev gap is a phenomenon that may occur in the study of
variational problems. In particular, the minimum of the integral
functional~$\mathcal{G}$ taken over smooth functions may differ from
the one taken over the associated energy space.

The first example for Lavrentiev gap was constructed by Lavrentiev in
\cite{Lav26}. A simpler one was provided by Mani{\'a} in~\cite{Man34}, who considered the functional
\begin{align*}
  \mathcal{G}(w) &:= \int_0^1 \big(x-(w(x))^3\big)^2 \abs{w'(x)}^2\,dx
\end{align*}
subject to the boundary condition~$w(0)=0$ and $w(1)=1$.  Now,
Mani{\'a} showed that there exists~$\tau > 0$ such that
$\mathcal{G}(w) \geq \tau$ for all $w \in C^1([0,1])$ with $w(0)=0$
and $w(1)=1$. However, the function $x^{\frac 13} \in W^{1,1}((0,1))$
has strictly smaller energy, namely $\mathcal{G}(x^{\frac
  13})=0$. This gap between zero and~$\tau$ is the so called
Lavrentiev gap.

In the example of Mani{\'a} the integrand
$f(x,w,\xi) := (x-w^3)^2 \abs{\xi}^2$ depends on~$x$, $w$ and
$\xi$. If the integrand only depends on~$x$ and $\xi$, then the
Lavrentiev gap does not appear in the case of one-dimensional
problems, see~\cite{Lav26}.
The corresponding question for two and higher dimensional problems
with integrands of the form $f(x,\nabla w(x))$ remained open for a
very long time.

\subsection{Zhikov's Famous Checkerboard Example -- Variable Exponents}

In 1986 Zhikov presented his famous two-dimensional checkerboard
example with a Lavrentiev gap, see~\cite{Zhi86}. In particular, he
considered the functional
\begin{align*}
  \mathcal{G}(w) &:= \int\limits_{(0,1)^2} \tfrac{1}{p(x)} \abs{\nabla
                   w}^{p(x)}\,dx 
                   -\int_\Omega b \cdot \nabla w  \, dx
\end{align*}
with the variable exponent (see Figure~\ref{fig:zhikov})
\begin{align*}
  p(x) &:=
         \begin{cases}
           p_2 &\qquad \text{for $x_1 x_2 > 0$},
           \\
           p_1 &\qquad \text{for $x_1 x_2 < 0$},
         \end{cases}
\end{align*}
where $1 < p_1 < 2 < p_2$ and $b \in L^\pdx(\Omega)$ with
$\frac{1}{p'(x)} +\frac{1}{p(x)}=1$. At this~$L^\px(\Omega)$ denotes
the space of Lebesgue space with variable exponent~$p$. The
natural energy space for this functional is $W^{1,\px}_0(\Omega)$, the
space of functions~$w \in W^{1,1}_0(\Omega)$ with
$\nabla w \in L^\px(\Omega)$. See
Subsections~\ref{ssec:gener-orlicz-space}
and~\ref{ssec:variable-exponents} for a precise definition of all
function spaces.  Now, Zhikov constructed a special
vector field~$b \in L^{\pdx}(\Omega)$ such that the infimum
of~$\mathcal{G}$ taken over~$W^{1,\px}_0(\Omega)$ is strictly smaller
than the one taken over the smooth functions~$C^\infty_0(\Omega)$.  If
we denote by~$H^{1,\px}_0(\Omega)$ the closure of~$C^\infty_0(\Omega)$
in $W^{1,\px}(\Omega)$, then we can summarize his result as
\begin{align*}
  \inf \mathcal{G}\big(W^{1,\px}_0(\Omega) \big)
  &<
    \inf \mathcal{G}\big(H^{1,\px}_0(\Omega)\big) =0.
\end{align*}
The important feature of this example is that the exponent $p$ has a
saddle point, where it crosses the dimensional threshold, i.e.
$p_1 < 2 < p_2$. Moreover, the vector field~$b$
satisfies~$\divergence b =0$ in the sense of distributions, so
$\int_\Omega b \cdot \nabla w\,dx=0$ for $w \in C^\infty_0(\Omega)$.
However, for a suitable cut-off
function~$\eta \in C^\infty_0(\Omega)$, we have
$\int_\Omega b \cdot \nabla (\eta u) \,dx = -1$ (see
Proposition~\ref{pro:separating} and~\ref{pro:Su}). Thus, $b$ doesn't
see the gradients of~$C^\infty_0(\Omega)$-functions but it sees the
one of~$\eta u \in W^{1,\phix}_0(\Omega)$.  Therefore, the
vector field~$b$ is also called \emph{separating vector field}. Another useful
feature is that $\abs{b} \cdot \abs{\nabla u}=0$ almost everywhere.

The question of the Lavrentiev gap is closely related to the density
of smooth functions, i.e. if
$W^{1,\px}_0(\Omega) = H^{1,\px}_0(\Omega)$ and
$W^{1,\px}(\Omega) = H^{1,\px}(\Omega)$. Using his checkerboard
exponent Zhikov showed that the function (see Figure~\ref{fig:zhikov})
\begin{align*}
  u(x) &:=
      \begin{cases}
        1 &\qquad \text{on $Q_1 := \set{x_1,x_2 > 0}$},
        \\
        \sin \theta &\qquad \text{on $Q_2 := \set{x_2>0>x_1}$},
        \\
        0 &\qquad \text{on $Q_3 := \set{x_1,x_2 < 0}$},
        \\
        \cos \theta &\qquad \text{on $Q_4 := \set{x_1>0>x_2}$},
      \end{cases}
\end{align*}
defined in polar coordinates $x=(r\cos \theta,r \sin \theta)$ satisfies
$u \in W^{1,\px}(\Omega) \setminus H^{1,\px}(\Omega)$.

The vector field~$b \in L^\pdx(\Omega)$ is in Zhikov's example defined as
$b(x) := \nabla^\perp (u(x^\perp))$ with $x^\perp = (-x_2,x_1)$ and
$\nabla^\perp = (-\partial_2,\partial_1)$, see
\cite[Section~3]{Zhi95}. 
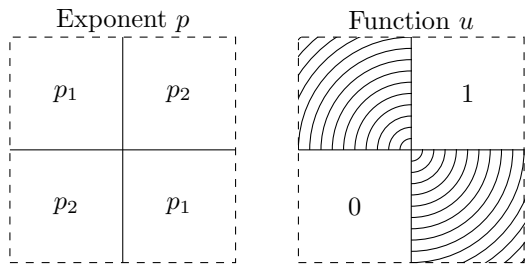
\begin{figure}
  \centering

  \begin{tikzpicture}[scale=1.5]
    \node at (0,1.15) {Exponent~$p$};
    \draw[dashed] (-1,-1) -- (-1,+1) -- (+1,+1) -- (+1,-1) --cycle;
    \draw (-1,0) -- (1,0);
    \draw (0,-1) -- (0,1);
    \node at (0.5,0.5) {$p_2$};
    \node at (-0.5,-0.5) {$p_2$};
    \node at (-0.5,0.5) {$p_1$};
    \node at (0.5,-0.5) {$p_1$};
  \end{tikzpicture}
  % \begin{tikzpicture}[scale=1.5]
  %   \node at (0,1.15) {Function $u$};
  %   \draw[dashed] (-1,-1) -- (-1,+1) -- (+1,+1) -- (+1,-1) --cycle;
  %   \draw (-1,0) -- (1,0);
  %   \draw (0,-1) -- (0,1);
  %   \node at (0.5,0.5) {$1$};
  %   \node at (0.5,-0.5) {$-\cos \theta$};
  %   \node at (-0.5,0.5) {$\sin \theta$};
  %   \node at (-0.5,-0.5) {$0$};
  % \end{tikzpicture}
  \qquad
  \begin{tikzpicture}[scale=1.5]
    \node at (0,1.15) {Function $u$};
    \draw[dashed] (-1,-1) -- (-1,+1) -- (+1,+1) -- (+1,-1) --cycle;
    \draw (-1,0) -- (1,0);
    \draw (0,-1) -- (0,1);

    \node at (0.5,0.5) {$1$};
    \node at (-0.5,-0.5) {$0$};
    \begin{scope}
      \clip (-1,0) rectangle (0,1);
      \draw (0,0) circle (1.6);
      \draw (0,0) circle (1.5);
      \draw (0,0) circle (1.4);
      \draw (0,0) circle (1.3);
      \draw (0,0) circle (1.2);
      \draw (0,0) circle (1.1);
      \draw (0,0) circle (1.0);
      \draw (0,0) circle (0.9);
      \draw (0,0) circle (0.8);
      \draw (0,0) circle (0.7);
      \draw (0,0) circle (0.6);
      \draw (0,0) circle (0.5);
      \draw (0,0) circle (0.4);
      \draw (0,0) circle (0.3);
      \draw (0,0) circle (0.2);
      \draw (0,0) circle (0.1);
    \end{scope}
    \begin{scope}
      \clip (0,-1) rectangle (1,0);
      \draw (0,0) circle (1.6);
      \draw (0,0) circle (1.5);
      \draw (0,0) circle (1.4);
      \draw (0,0) circle (1.3);
      \draw (0,0) circle (1.2);
      \draw (0,0) circle (1.1);
      \draw (0,0) circle (1.0);
      \draw (0,0) circle (0.9);
      \draw (0,0) circle (0.8);
      \draw (0,0) circle (0.7);
      \draw (0,0) circle (0.6);
      \draw (0,0) circle (0.5);
      \draw (0,0) circle (0.4);
      \draw (0,0) circle (0.3);
      \draw (0,0) circle (0.2);
      \draw (0,0) circle (0.1);
    \end{scope}
  \end{tikzpicture}
  % \hfill
  % \begin{tikzpicture}[scale=1.5]
  %   \draw[dashed] (-1,-1) -- (-1,+1) -- (+1,+1) -- (+1,-1) --cycle;
  %   \draw (-1,0) -- (1,0);
  %   \draw (0,-1) -- (0,1);
  %   \begin{scope}
  %     \clip (-1,-1) rectangle (1,1);%
  %     \foreach \theta in {0,15,30,45,60,75,90} {%
  %       \foreach \r in {0.2,0.4,0.6,0.8,1.0,1.2, 1.4} {%
  %         \draw ({\r*cos(\theta)}, {\r*sin(\theta)}) --
  %         ++({(0.2*(1-cos(\theta)*sin(\theta)))},
  %         {0.2*(-cos(\theta)*sin(\theta))}); } }
  %   \end{scope}
  %   \begin{scope}
  %     \clip (-1,-1) rectangle (1,1);%
  %     \foreach \theta in {180,195,210,225,240,255,270} {%
  %       \foreach \r in {0.2,0.4,0.6,0.8,1.0,1.2, 1.4} {%
  %         \draw ({\r*cos(\theta)}, {\r*sin(\theta)}) --
  %         ++({(0.2*(1-cos(\theta)*sin(\theta)))},
  %         {0.2*(-cos(\theta)*sin(\theta))}); } }
  %   \end{scope}
  % \end{tikzpicture}
  
  \caption{Zhikov's checkerboard example; $1 < p_1 < 2 < p_2$. The
    lines indicate smooth transitions.}
  \label{fig:zhikov}
\end{figure}
Now, $u \notin W^{1,p_2}(\Omega)$, since $p_2>2=d$ and
$u$ jumps at the point~$0$. But, since $u$ changes its value in the
area of the exponent~$p_1 <2$, we still get
$u \in W^{1,p(\cdot)}(\Omega)$.

However, $u$ cannot be approximated by smooth functions~$u_n$. Indeed,
it follows from $u_n\in W^{1,p_2}(Q_1) \cap W^{1,p_2}(Q_3)$ and the
continuity of~$u_n$ at~$0$ that the $u_n$ are uniformly H\"older
continuous on $\overline{Q_1} \cup \overline{Q_3}$ with exponent
$\alpha := 1-\frac{2}{p_2}>0$ uniformly in~$n$, but the limit~$u$ is
not even continuous.

It is possible to generalize this example to higher dimensions, i.e.
$\Omega = (-1,1)^d$ with~$d\ge 2$. Again the variable
exponent~$p$ has a saddle point at zero. It takes the value $p_2 > d$ on
the double
cone~$\Omega \cap \set{x=(x',x_d)\,:\,\abs{x_d} \geq \abs{x'}}$ and
$p_1 < d$ on its complement. The crucial point is again
that~$W^{1,p_2}(\Omega) \embedding C^{0,\alpha}(\overline{\Omega})$
for $\alpha = 1-\frac{d}{p_2}$. So the exponent has a saddle point,
where it just crosses the dimension~$d$.

Up to now no other examples of
$H^{1,\px}(\Omega) \neq W^{1,\px}(\Omega)$ with variable exponents
were known. This led people to the question if the dimension plays a
critical role, i.e. that it is important that at the saddle point the
variable exponent~$p$ crosses the threshold~$d$.  This question has been
repeatedly raised by Zhikov and also by
H\"ast\"o~\cite{DieHasNek04}.

The saddle point setup is the simplest geometry for the Lavrentiev gap
to appear. It has been shown by Zhikov~\cite{Zhi86} that if~$\px$
takes only two values separated by a smooth surface,
then~$H^{1,\px}(\Omega)=W^{1,\px}(\Omega)$. Even more, if we take a piecewise constant
exponent which takes three constant values in three sectors of the
plane separated by rays emanating from the origin, then
also~$H^{1,\px}=W^{1,\px}$. This is a special case of the montonicity
condition on cones by Edmunds and Rakosnik~\cite{EdmRak92} that
ensures $H^{1,\px}(\Omega)=W^{1,\px}(\Omega)$.
% Moreover, also regularity of~$p$ such
% as $\log$-H\"older continuity ensures that $H^{1,\px}=W^{1,\px}$
% proved by Zhikov in~\cite{Zhi95}. In fact, if $p$ is $\log$-H\"older
% continuous, then density of smooth functions follows by convolutions,
% see~\cite{Zhi95,Die04}.

Another situation for~$H^{1,\px}(\Omega)=W^{1,\px}(\Omega)$ is,
when~$p$ has certain regularity.  In 1995 Zhikov~\cite{Zhi95} found
the celebrated local $\log$-Hölder condition
\begin{align*}
  \abs{p(x)-p(y)}\le \frac{c}{\log (e+\frac{1}{\abs{x-y}})}.
\end{align*}
This condition allows to use mollification to prove density of smooth
functions~\cite{Zhi95}, \cite{Die04}, \cite[Section
4.6]{DieHHR11}. The $\log$-H\"older continuity is also important for
many other properties like boundedness of the maximal operator, the
Riesz potential, singular integral operators and sharp Sobolev
embeddings. For more details we refer to the
books~\cite{DieHHR11,CruFio13,KokMesRafSam16}. For the density of
smooth functions it is possible to weaken the modulus of continuity
slightly by an extra double log factor, see~\cite{Zhi04}.

In Zhikov's original example the exponent~$p(\cdot)$ jumped at the
saddle point. However, it is possible to modify the exponent to a
uniformly continuous one (not $\log$-H\"older continuous). Again the
exponent has a saddle point, where it crosses the dimensional
threshold. Such examples have been obtained independently by
Zhikov~\cite{Zhi95} and H\"ast\"o~\cite{Has05}.

Zhikov also showed with his counter example that the notion of
$p(\cdot)$-harmonic functions becomes ambiguous. In particular, minimizers
of
\begin{align*}
  \mathcal{F}(w) &:= \int_\Omega \tfrac 1{p(x)} \abs{\nabla w}^{p(x)}\,dx
\end{align*}
for given nice boundary may differ depending if we minimize in
$H^{1,\px}(\Omega)$ or in $W^{1,\px}(\Omega)$.

One of the goals of this paper is to provide new examples of variable
exponents, such that the Lavrentiev gap occurs, but which do not need
to cross the dimensional threshold, see
Subsection~\ref{ssec:variable-exponents}. We also show the non-density
of smooth functions, i.e. $H^{1,\px}(\Omega) \neq W^{1,\px}(\Omega)$
and the ambiguity of $\px$-harmonicity.

\subsection{Double Phase Potential}

The famous checkerboard example became the guiding principle for other
models. In 1995 Zhikov~\cite[Example~3.1]{Zhi95} considered the double phase potential
\begin{align*}
  \mathcal{F}(w) &:= \int_\Omega \tfrac 1 p \abs{\nabla w}^p + \tfrac 1q
                   a(x) \abs{\nabla w}^q\,dx,
\end{align*}
where $1 < p < q < \infty$ and~$a \in C^{0,\alpha}(\overline{\Omega})$
is a non-negative weight. He constructed with a similar checkerboard
setup a weight~$a \in C^{0,\alpha}(\overline{\Omega})$ with
$\alpha=1$, $\Omega= (-1,1)^2$ and $p < 2 < 2+\alpha =3 < q$ such that
the Lavrentiev gap occurs. On the quadrants~$Q_1$ and~$Q_3$ he
chose~$a(x) = \frac{x_1x_2}{\abs{x}}$ and on the quadrants~$Q_2$
and~$Q_4$ he chose~$a(x)=0$. The exponents take the same values as in
Figure 1 with~$p_1=p$ and~$p_2=q$.

Again he showed that there exists a
functional~$\mathcal{G}(w) = \mathcal{F}(w) + \int_\Omega b \cdot
\nabla w\,dx$ such that
\begin{align*}
  \inf \mathcal{G}\big(W^{1,p}_0(\Omega) \big)
  &<
    \inf \mathcal{G}\big(W^{1,q}_0(\Omega)\big).
\end{align*}
This example was generalized by Esposito, Leonetti and Mingione
in~\cite{EspLeoMin04} to the case of higher dimensions and less
regular weights, i.e. $\alpha \in (0,1]$. In particular
for~$\Omega=(-1,1)^d$, they constructed a
weight~$a \in C^{0,\alpha}(\overline{\Omega})$ and exponents
$1 < p < d < d+ \alpha < q$ such that the Lavrentiev gap occurs.  For
this they changed $a$ 
to~$a(x)=\abs{x}^\alpha \max
\set{\frac{{x_d}^2-\abs{\bar{x}}^2}{\abs{x}^2},0}$ with
$x=(\bar{x},x_d)$.  In both examples by Zhikov and
Esposito-Leonetti-Mingione the two exponents~$p$ and~$q$ cross the
dimensional threshold~$d$. So again, there was the question, if this
threshold is important for the Lavrentiev gap.

This phenomenon for the double phase potential can also be seen as a
lack of higher regularity, see Marcellini~\cite{Mar89} for the first
example in this direction. In fact, local minimizers of~$\mathcal{F}$
need not be~$W^{1,q}$-functions unless~$a$, $p$ and~$q$ satisfy
certain assumptions. In fact, if $\frac{q}{p} \leq 1+\frac \alpha{d}$
and $a \in C^{0,\alpha}$, then minimizers of~$\mathcal{F}$ are
automatically in~$W^{1,q}$, see \cite{ColMin15}. Moreover, bounded
minimizers of~$\mathcal{F}$ are automatically~$W^{1,q}$
if~$a \in C^{0,\alpha}$ and~$q \leq p+\alpha$,
see~\cite{BarColMin18}. If the minimizer is from~$C^{0,\gamma}$, then
the requirement can be relaxed to $q \leq p + \frac{\alpha}{1-\gamma}$
\cite[Theorem~1.4]{BarColMin18}. The example of
Esposito-Leonetti-Mingione shows that in some sense these estimates
are sharp. However, they are sharp only for $p=d-\epsilon$
with~$\epsilon>0$ small.  In this paper we will provide new examples
for the Lavrentiev gap that get rid of this
condition~$p = d-\epsilon$. We will present new examples that show
that the conditions~$q \leq p+\alpha$ and
$q \leq p + \frac{\alpha}{1-\gamma}$ are sharp for a much wider range
of~$p$ and~$q$, see Subsection~\ref{ssec:double-phase-potent}. In
particular, we present examples without the dimensional threshold.

The question of Lavrentiev gap can also be viewed from the point of
function spaces. In fact, the energy~$\mathcal{F}$ defines a
generalized Sobolev-Orlicz space~$W^{1,\phix}(\Omega)$ and its
counterpart~$W^{1,\phix}_0(\Omega)$ with zero boundary values, see
Subsection~\ref{ssec:gener-orlicz-space} for the precise definition of
the spaces. Then the above Lavrentiev gap can be also written as
\begin{align*}
  \inf \mathcal{G}\big(W^{1,\phix}_0(\Omega) \big)
  &<
    \inf \mathcal{G}\big(H^{1,\phix}_0(\Omega)\big),
\end{align*}
where $H^{1,\phix}_0(\Omega)$ is the closure of~$C^\infty_0(\Omega)$
functions in~$W^{1,\phix}(\Omega)$. Hence, the question of the
Lavrentiev cap is closely related to the density of smooth functions,
i.e. if $H^{1,\phix}_0(\Omega) = W^{1,\phix}_0(\Omega)$ and
$H^{1,\phix}(\Omega) = W^{1,\phix}(\Omega)$.

We present fractal examples without the dimensional threshold that
support the Lavrentiev gap, the non-density of smooth functions and
the ambiguity of the related harmonicity.

\subsection{\texorpdfstring{Weighted $p$-Energy}{Weighted p-Energy}}
\label{ssec:zhikov-weight-sobol-spac}

Zhikov also considered another example, namely the one of weighted
Sobolev spaces. In particular, he considered the energy
\begin{align*}
  \mathcal{F}(w) := \int_\Omega \tfrac 1p a(x) \abs{\nabla w}^p\,dx
  =\int_\Omega \tfrac 1p (\omega(x) \abs{\nabla w})^p\,dx.
\end{align*}
Again, he used a checkerboard setup to construct weights~$a$,
resp.~$\omega$ that provide for~$p=2$ a Lavrentiev gap and non-density
of smooth functions, see \cite[Example~3.3]{Zhi95}.  His weight is
unbounded but it is bounded from above and below by two Muckenhoupt
weights from~$A_2$.  In \cite[Section 5.3]{Zhi98} he presented another
more complicated example with a bounded weight, see
Remark~\ref{rem:zhikov-barrier}.

Again, we present fractal examples without the dimensional threshold
that support the Lavrentiev gap, the non-density of smooth functions
and the ambiguity of the related harmonicity.

If~$a$ itself is a Muckenhoupt weight, then it is well known that
smooth functions are dense, so
$W^{1,\phix}(\Omega) = H^{1,\phix}(\Omega)$. For other results on the
density in the context of weighted Sobolev spaces with even variable
exponents, we refer to~\cite{Sur14,ZhiSur16}.

\subsection{Structure of the Article}

The structure of the article is as follows. In
Section~\ref{sec:constr-fract} we will use fractals to construct the
functions~$u$ and~$b$ that we need later in our applications. We start
with a modified version of the checker board example by Zhikov, which
works in all dimensions. Then we introduce the necessary fractals of
Cantor type to construct function~$u$ and the vector field~$b$ without the problem of
the dimensional threshold.

In Section~\ref{sec:consequences} we show how~$u$ and~$b$ can be used
to deduce the Lavrentiev gap, the non-density of smooth functions and
ambiguity of the related harmonicity. In this section we also introduce the
necessary function spaces.

In Section~\ref{sec:applications} we apply out technique to the model
of variable exponents, the double phase potential and weighted
$p$-energy. From the point of applications these are the main results
of our paper.

%% ------------------------------------------------------------
%% ------------------------------------------------------------

\section{Construction of Fractal Examples}
\label{sec:constr-fract}

In this section we will use fractals to construct the functions~$u$
and~$b$, which are necessary to study later in
Section~\ref{sec:consequences} the Lavrentiev gap and the other
phenomena. The construction of these functions is independent of the
models that we consider in
Section~\ref{sec:applications}. 

% \begin{enumerate}
% \item Tiny intro
% \item Cantor sets
% \item Construction of functions plus properties
% \end{enumerate}

Let us clarify our notation. By $B^m_r(x)$ we denote the ball
of~$\setR^m$ with radius~$r$ and center~$x$. We denote
by~$\indicator_A$ the indicator function of the
set~$A$. By~$L^p(\Omega)$ and $W^{1,p}(\Omega)$ we denote the usual
Lebesgue and Sobolev spaces. Moreover, let~$W^{1,p}_0(\Omega)$ be the
Sobolev space with zero boundary values. By~$L^1_{\loc}(\Omega)$ we
denote the space of locally integrable functions (integrable on
compact subsets) with $W^{1,1}_{\loc}(\Omega)$ defined analogously. We
use~$c>0$ for a generic constants whose value may change from line to
line but does not depend on critical parameters. We also abbreviate
$f \lesssim g$ for $f \leq c\, g$.

\subsection{One Building Block}
\label{ssec:one-building-block}

We begin with a multidimensional, revised version of the Zhikov
example. We will use it later as the building block for fractal
examples.

\begin{definition}[Zhikov's example; revised]
  \label{def:zhikov}
  For~$d \geq 2$ define $u_d$, $A_d$ and $b_d$ on~$\Rd$ by
  \begin{align*}
    u_d &:= \sgn(x_d)\, \theta\bigg( \frac{\abs{x_d}}{\abs{\bar{x}}} \bigg), 
    \\
    A_d(x) &:= \theta\bigg( \frac{\abs{\bar{x}}}{\abs{x_d}} \bigg)
            \frac{1}{\sigma_{d-1}} \abs{\bar{x}}^{1-d}
              \begin{pmatrix}
                0 & -\bar{x}\\
                \bar{x}^T & 0
              \end{pmatrix}, 
    \\
    b_d &:= \divergence A_d,
  \end{align*}
  where $\sigma_{d-1}$ is the surface area of the $d-1$-dimensional
  sphere and ~$\theta \in C^\infty_0((0,\infty))$ is such that
  $\indicator_{(\frac 12,\infty)} \leq \theta \leq \indicator_{(\frac
    14, \infty)}$,  $\norm{\theta'}_\infty \le 6$.
\end{definition}

The matrix divergence is taken rowwise, i.e for matrix~$A=\set{A_{ij}}$ we define~$(\divergence A)_i=\partial_j A_{ij}$. 

In Figure~\ref{fig:revised-checkerboard} it is shown how our revised
version of Zhikov's checkerboard example looks for~$d=2$. The picture
shows the function~$u_2$, the $(2,1)$-component of~$A_2$ and a
possible exponent~$p$. The picture should be compared to the one of
Figure~\ref{fig:zhikov}. There are two main differences. First, our
version is rotated by~$45^\circ$ counterclockwise. Second, there is an
additional area, where~$u_2=0$. This fact will be very useful
later. Note that on the shaded region of~$u_2$, resp. $A_2$, we have
$\abs{x_1} \eqsim \abs{x_2} \eqsim \abs{x}$, which allows us later a
freedom in the choice of variable on the shaded region.
\begin{figure}[!ht]
  \centering
  
  \begin{tikzpicture}[scale=1.5]
    \node at (0,1.15) {Function~$u_2$};
    \draw[dashed] (-1,-1) -- (-1,+1) -- (+1,+1) -- (+1,-1) --cycle;
    % \draw[thick,dotted] (-0.8,-0.85) -- (-0.8,+0.85) -- (+.8,+.85) -- (+.8,-.85) --cycle;
    \node at (0,.7) {\scalebox{0.8}{$\frac 12$}};
    \node at (0,-.7) {\scalebox{0.8}{$-\frac 12$}};
    \draw (-1,-1/2) -- (1,1/2);
    \draw (-1,1/2)-- (1,-1/2);
    \node at (+.6,0) {\scalebox{0.8}{$0$}};
    \node at (-.6,0) {\scalebox{0.8}{$0$}};
    \filldraw[pattern=vertical lines] (-1,-1/2) -- (0,0) -- (-1,-1/4);
    \filldraw[pattern=vertical lines] (-1,+1/2) -- (0,0) -- (-1,+1/4);
    \filldraw[pattern=vertical lines] (+1,-1/2) -- (0,0) -- (+1,-1/4);
    \filldraw[pattern=vertical lines] (+1,+1/2) -- (0,0) -- (+1,+1/4);
  \end{tikzpicture}
  \quad
  \begin{tikzpicture}[scale=1.5]
    \node at (0,1.15) {$(2,1)$-component of~$A_2$};
    \draw[dashed] (-1,-1) -- (-1,+1) -- (+1,+1) -- (+1,-1) --cycle;
    
    \node at (.6,0) {\scalebox{0.8}{$\frac 12$}};
    \node at (-.55,0) {\scalebox{0.8}{$-\frac 12$}}; 
    \draw (-1/2,-1)--(+1/2,1);
    
    \draw(+1/2,-1)-- (-1/2,+1) ;
    \node at (0,.7) {\scalebox{0.8}{$0$}};
    \node at (0,-.7) {\scalebox{0.8}{$0$}};
    
    \filldraw[pattern=horizontal lines] (-1/2,-1) -- (0,0) -- (-1/4,-1);
    \filldraw[pattern=horizontal lines] (+1/2,-1) -- (0,0) -- (+1/4,-1);
    \filldraw[pattern=horizontal lines] (-1/2,+1) -- (0,0) -- (-1/4,+1);
    \filldraw[pattern=horizontal lines]
    (+1/2,+1) -- (0,0) -- (+1/4,+1);

  \end{tikzpicture}
  \quad
  \begin{tikzpicture}[scale=1.5]
    \node at (0,1.15) {Exponent~$p$};
    \draw[dashed] (-1,-1) -- (-1,+1) -- (+1,+1) -- (+1,-1) --cycle;
    \node at (0,.7) {\scalebox{0.8}{$2+\epsilon$}};
    \node at (0,-.7) {\scalebox{0.8}{$2+\epsilon$}};
    \node at (.6,0) {\scalebox{0.8}{$2-\epsilon$}};
    \node at (-.7,0) {\scalebox{0.8}{$2-\epsilon$}};
    \draw (0,0) -- ( 1, 1)-- (-1, 1)--cycle;
    \draw (0,0) -- ( 1,-1)-- (-1,-1)--cycle;
    \draw (0,0) -- ( 1, 1)-- ( 1,-1)--cycle;
    \draw (0,0) -- (-1, 1)-- (-1,-1)--cycle;
    \draw (0,0) -- (+1,+1);
    \draw (0,0) -- (+1,-1);
    \draw (0,0) -- (-1,+1);
    \draw (0,0) -- (-1,-1);
  \end{tikzpicture}
  \caption{Revised Checkerboard Example}
  \label{fig:revised-checkerboard}
\end{figure}
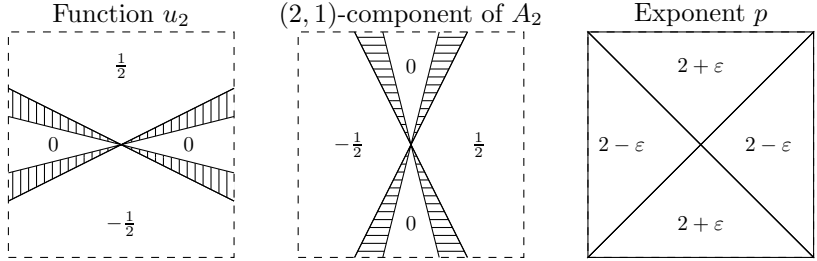
Another difference to the example of Zhikov is the improved regularity
away from the singularity at zero.
\begin{proposition}
  \label{pro:prop-uAb-d}
  There holds
  \begin{enumerate}
  \item
    $u_d \in L^\infty(\Rd) \cap W^{1,1}_{\loc}(\Rd) \cap C^\infty(\Rd
    \setminus \set{0})$,
  \item $A_d \in W^{1,1}_{\loc}(\Rd) \cap C^\infty(\Rd \setminus \set{0})$,
  \item
    $b_d \in L^1_{\loc}(\Rd) \cap C^\infty(\Rd \setminus \set{0})$.
  \end{enumerate}
  Moreover, the following estimates hold
  \begin{align*}
    \begin{alignedat}{2}
      \abs{\nabla u_d} &\lesssim \abs{x_d}^{-1} \indicator_{ \set{ 2
          \abs{x_d} \leq \abs{\bar{x}} \leq 4 \abs{x_d}}} &&\eqsim
      \abs{\bar{x}}^{-1} \indicator_{ \set{ 2 \abs{x_d} \leq
          \abs{\bar{x}} \leq 4 \abs{x_d}}}
      \\
      \abs{b_d} &\eqsim \abs{x_d}^{1-d}
      \indicator_{ \set{ 2 \abs{\bar{x}} \leq \abs{x_d} \leq 4
          \abs{\bar{x}}}} .
     \end{alignedat}
  \end{align*}
  In particular, $\abs{\nabla u_d}\cdot \abs{b_d} = 0$.
\end{proposition}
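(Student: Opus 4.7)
\medskip

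\textbf{Proof plan.}

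The plan is to treat the three items and the size estimates through direct computation, exploiting two structural observations: first, that $\theta$ is identically $0$ near the origin and identically $1$ near infinity, so compositions with $|x_d|/|\bar x|$ or $|\bar x|/|x_d|$ are smooth wherever at least one of $x_d$ or $\bar x$ is nonzero; second, that the matrix
\begin{align*}
  M(x) \;:=\; \frac{1}{\sigma_{d-1}} |\bar x|^{1-d}
  \begin{pmatrix} 0 & -\bar x \\ \bar x^T & 0 \end{pmatrix}
\end{align*}
is divergence-free on $\{\bar x \neq 0\}$, since $\bar x\, |\bar x|^{1-d}$ is the (rescaled) fundamental solution of the Laplacian in the $\bar x$-variables. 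This last fact will make $b_d$ supported only where $\theta(|\bar x|/|x_d|)$ is in transition.

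First I would verify the smoothness statements. Away from the origin, on the open cone $\{|x_d| > |\bar x|/2\}$ we have $u_d(x) = \sgn(x_d)$, which is locally constant (and smooth) since $x_d \neq 0$ there; on $\{|x_d| < |\bar x|/4\}$ we have $u_d \equiv 0$; and on the transition region $\{|\bar x|/4 < |x_d| < |\bar x|/2\}$ we have $\bar x \neq 0$ and $u_d$ is a smooth composition. The same kind of case analysis yields $A_d \in C^\infty(\R^d \setminus \{0\})$. The gradient of $u_d$ is then computed by the chain rule: $\nabla u_d = \sgn(x_d)\, \theta'(|x_d|/|\bar x|) \, \nabla(|x_d|/|\bar x|)$, whose modulus is $\lesssim |\theta'|\,(|x_d|/|\bar x|^2 + 1/|\bar x|)$ and is supported precisely where $|x_d|/|\bar x| \in [1/4,1/2]$, i.e.\ where $2|x_d| \leq |\bar x| \leq 4 |x_d|$; on that region $|x_d| \eqsim |\bar x|$, giving the stated bound. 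Local integrability of $\nabla u_d$ (hence $u_d \in W^{1,1}_{\loc}$) follows from cylindrical coordinates $dx = d\bar x\, dx_d$ and the volume estimate $\int_{B_r}|\nabla u_d|\,dx \lesssim \int_0^r \rho^{d-2}\,d\rho < \infty$ for $d \geq 2$; the case of $A_d$, where $|A_d| \lesssim |\bar x|^{2-d}$ and $|\nabla A_d|\lesssim |\bar x|^{1-d}$ on the support of $\theta(|\bar x|/|x_d|)$, is analogous.

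The main step, and the one I expect to take the most care, is the identification of $b_d = \divergence A_d$. Writing $A_d(x) = \theta(|\bar x|/|x_d|)\, M(x)$ and applying the product rule, one gets $b_d = \theta(|\bar x|/|x_d|)\,\divergence M + M\,\nabla \theta(|\bar x|/|x_d|)$. The first term vanishes away from the origin by the divergence-free computation sketched above (componentwise: for $i<d$ the only entry is $A_{id}$, independent of $x_d$, so $\partial_d A_{id}=0$; the $i=d$ component gives $\frac{1}{\sigma_{d-1}}\divergence_{\bar x}(\bar x\,|\bar x|^{1-d}) = 0$). Therefore $b_d$ is concentrated on the support of $\theta'(|\bar x|/|x_d|)$, namely $\{2|\bar x| \leq |x_d| \leq 4|\bar x|\}$, and a direct computation of $M\,\nabla\theta(|\bar x|/|x_d|)$ gives both components of order $|\bar x|^{2-d}|\theta'|/|x_d|$, hence $|b_d| \eqsim |x_d|^{1-d}$ on that region (using $|\bar x| \eqsim |x_d|$ there, and a fixed choice of $\theta$ strictly increasing on $[1/4,1/2]$ to obtain the lower bound). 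Local integrability of $b_d$ is then immediate from its size times the volume of its support, or alternatively from $A_d \in W^{1,1}_{\loc}$.

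Finally, the identity $|\nabla u_d|\cdot|b_d|=0$ is just the observation that the two supports $\{2|x_d|\leq|\bar x|\leq 4|x_d|\}$ and $\{2|\bar x|\leq|x_d|\leq 4|\bar x|\}$ are disjoint: on the first $|\bar x|\geq 2|x_d|$ while on the second $|x_d|\geq 2|\bar x|$, which together force $\bar x = x_d = 0$. The whole argument is essentially bookkeeping; the only subtle point is the cancellation making $M$ divergence-free, which is where the normalization $\frac{1}{\sigma_{d-1}}|\bar x|^{1-d}$ in the definition is exploited.
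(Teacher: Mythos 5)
Your proof is correct and takes essentially the same route as the paper's (very terse) proof: smoothness away from the origin by inspection of the three cones, the chain/product rule computations for $\nabla u_d$ and $b_d=\divergence A_d$ using that $\bar{x}\,|\bar{x}|^{1-d}$ is divergence-free in $\bar{x}$, local integrability from the size of the supports, and disjointness of the supports for $|\nabla u_d|\,|b_d|=0$. The only point the paper makes explicit that you pass over is the justification that the locally integrable pointwise gradient is actually the distributional one across the origin (the paper invokes the ACL property; some such remark is needed, since "smooth off a set with integrable pointwise gradient" alone would also apply to $\sgn(x_d)$), and note that the two-sided bound $|b_d|\eqsim |x_d|^{1-d}$ on the whole transition cone cannot hold uniformly for a smooth $\theta$ (since $\theta'$ vanishes at the edges) — only the upper bound is used later, and this looseness is already in the paper's statement, so neither point is a real gap.
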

\begin{proof}
  It is easy to see that
  $u_d,A_d,b_d \in C^\infty(\Rd \setminus \set{0})$ and
  $u_d \in L^\infty(\Rd)$. Moreover, $u_d$ and $A_d$ have the ACL
  property (absolutely continuous on almost every line parallel to the
  axis).  Now, the estimates for $\abs{\nabla u_d}$, $\abs{b_d}$ 
   are also straight forward. They imply immediately
  that $\nabla u_d,  b_d \in L^1_{\loc}(\Rd)$. This proves
  the claim.
\end{proof}
Note that
\begin{align}
  \label{eq:bd-divfree}
  \divergence b_d = \divergence (\divergence A_d)=0 \qquad
  \text{on~$\setR^d \setminus \set{0}$,}
\end{align}
since~$A_d$ is skew-symmetric. Moreover, for all~$\xi \in
C^\infty_0(\Rd)$ we have
\begin{align*}
  \int_{\Rd} b_d \cdot \nabla \xi \,dx
  &=
    \int_{\Rd} \divergence A_d \cdot \nabla \xi \,dx = -
    \int_{\Rd} A_d  : \nabla^2 \xi \,dx  =0,
\end{align*}
again, since~$A_d$ is skew-symmetric. Thus,
\begin{align}
  \label{eq:divbd}
  \divergence b_d &=\divergence (\divergence A_d) = 0 \qquad \text{in $(C^\infty_0(\Rd))'$, i.e. in
                    the distributional sense}.
\end{align}
This is a crucial property of~$b_d$, since it implies that~$b_d$ is
orthogonal to the gradient of smooth functions. This will allow us
later to separate~$u_d$ from the smooth functions. Therefore, we
call~$b_d$ also \emph{separating vector field}.

It follows from~\eqref{eq:divbd} and the regularity of~$b_d$ that
\begin{align}
  \label{eq:divbd2}
  \divergence b_d &=0 \qquad \text{in the classical sense on~$\Rd
  \setminus \set{0}$.}
\end{align}

Another important feature of~$u_d$ and~$b_d$ is the following
proposition on the boundary integral that we would obtain it if we were
allowed to use partial integration
on~$\int_\Omega b_d \cdot \nabla u_d\,dx$ (we are not,
since~$b_d \notin W^{1,1}(\Omega)$).
\begin{proposition}
  \label{pro:match-int-bu}
  Let $\Omega= (-1,1)^d$ with~$d \geq 2$. Then
  \begin{align*}
    \int_{\partial \Omega} (b_d \cdot \nu) u_d\,dS = 1.
  \end{align*}
\end{proposition}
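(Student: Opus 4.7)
\medskip

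\noindent\textbf{Proof plan.} The strategy is to exploit three structural properties: $b_d$ is (classically) divergence-free on $\setR^d\setminus\{0\}$ by \eqref{eq:divbd2}, the supports of $\nabla u_d$ and $b_d$ are disjoint (so $\nabla u_d\cdot b_d\equiv 0$ by Proposition \ref{pro:prop-uAb-d}), and both objects are homogeneous under scaling. Since $b_d$ is not integrable enough near the origin to allow a direct integration by parts, I would first excise the singularity. Fix $\epsilon\in(0,\tfrac12)$ and set $\Omega_\epsilon:=\Omega\setminus\overline{B^d_\epsilon(0)}$. On $\overline{\Omega_\epsilon}$ both $u_d$ and $b_d$ are smooth, so the divergence theorem applies to the vector field $u_d b_d$, giving
\begin{align*}
\int_{\partial\Omega_\epsilon} u_d\,(b_d\cdot\nu_{\text{out}})\,dS
=\int_{\Omega_\epsilon}\bigl(\nabla u_d\cdot b_d + u_d\,\divergence b_d\bigr)\,dx = 0.
\end{align*}
Splitting $\partial\Omega_\epsilon=\partial\Omega\cup\partial B^d_\epsilon(0)$, with outer normal $\nu$ on the first piece and $-\nu_\epsilon$ (minus the outer normal of the ball) on the second, this rearranges to
\begin{align*}
\int_{\partial\Omega} u_d\,(b_d\cdot\nu)\,dS
=\int_{\partial B^d_\epsilon(0)} u_d\,(b_d\cdot\nu_\epsilon)\,dS.
\end{align*}

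Next I would observe that the right-hand side is independent of $\epsilon$, either by running the same argument on an annulus $B^d_R\setminus B^d_\epsilon$ or, more transparently, by the change of variable $x=\epsilon y$ together with the homogeneities $u_d(\epsilon y)=u_d(y)$, $b_d(\epsilon y)=\epsilon^{1-d}b_d(y)$, and the surface-measure scaling $\epsilon^{d-1}\,dS$. Thus it suffices to evaluate the integral on any single surface enclosing the origin. I would use $\partial\Omega$ itself, since several faces contribute nothing.

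For the explicit computation I would first argue that the side faces are inert. On $\{x_i=\pm 1\}$ with $i<d$ one has $|\bar x|\geq 1$ while $|x_d|\leq 1$, so the condition $2|\bar x|\leq |x_d|$ appearing in the support of $b_d$ (Proposition \ref{pro:prop-uAb-d}) fails, and $b_d=0$ there. On the top and bottom faces $\{x_d=\pm 1\}$, the flip $x_d\mapsto -x_d$ reverses the sign of $u_d$ and of $\nu$ but preserves $(b_d)_d$ (since $|x_d|$ appears only through absolute values in $A_d$), so the two faces contribute equally. It remains to compute $(b_d)_d=\sum_{j<d}\partial_j A_{dj}$ on $\{x_d=1\}$: the terms arising from differentiating $|\bar x|^{1-d}$ and from the product rule on the factor $x_j$ cancel, leaving
\begin{align*}
(b_d)_d=\frac{1}{\sigma_{d-1}}\cdot\frac{|\bar x|^{2-d}\,\theta'(|\bar x|/|x_d|)}{|x_d|}.
\end{align*}
Plugging this into the boundary integral and noting that $u_d=\theta(1/|\bar x|)=1$ on the support of $\theta'(|\bar x|)\subset\{|\bar x|\in(\tfrac14,\tfrac12)\}$, one passes to polar coordinates in $\setR^{d-1}$; the factor $|\bar x|^{2-d}\cdot r^{d-2}=1$ makes the angular integral trivial and reduces the whole quantity to a multiple of $\int_{1/4}^{1/2}\theta'(r)\,dr=1$. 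Putting the two faces together, the dimensional constants are exactly those encoded in the choice of $1/\sigma_{d-1}$ in the definition of $A_d$, yielding the value $1$.

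The main obstacle is the bookkeeping in Step 3: verifying that the two potentially problematic terms in $(b_d)_d$ cancel thanks to the specific radial weight $|\bar x|^{1-d}$, and that the angular factors from polar coordinates on $\setR^{d-1}$ combine with $\sigma_{d-1}$ to produce precisely $1$. Everything else — the integration by parts, the vanishing of the bulk integrand, the side-face analysis, and the symmetry doubling the top/bottom contribution — is structural and essentially dictated by Proposition \ref{pro:prop-uAb-d} and \eqref{eq:divbd2}.
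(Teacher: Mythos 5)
Your argument is, at its core, the same as the paper's: on the side faces $\set{x_i=\pm 1}$, $i<d$, the vector field $b_d$ vanishes because its support lies in $\set{2\abs{\bar{x}}\le \abs{x_d}}$ (Proposition~\ref{pro:prop-uAb-d}); the oddness of $u_d$ and the evenness of $(b_d)_d$ in $x_d$ combine the two faces $\set{x_d=\pm1}$; and the problem reduces to the flux of $(b_d)_d$ through $\set{x_d=1}$. The paper packages this last step as Lemma~\ref{lem:bd-int}, applying the divergence theorem in $\setR^{d-1}$ to $g(\bar{x})=\theta(\abs{\bar{x}}/\abs{x_d})\,\sigma_{d-1}^{-1}\bar{x}\abs{\bar{x}}^{1-d}$ over a ball on whose boundary $\theta\equiv 1$, whereas you compute $(b_d)_d=\sigma_{d-1}^{-1}\abs{\bar{x}}^{2-d}\abs{x_d}^{-1}\theta'(\abs{\bar{x}}/\abs{x_d})$ directly (your cancellation is exactly the statement that $\bar{x}\abs{\bar{x}}^{1-d}$ is divergence free in $\setR^{d-1}\setminus\set{0}$) and then integrate radially; these are the same computation in different clothing, so that part is fine. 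Your opening excision/divergence-theorem step is correct but superfluous: it only shows the flux is the same through every surface enclosing the origin, and you then evaluate on $\partial\Omega$ anyway, which is what the statement asks for directly.

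Where your write-up does not close is the final constant, which matters here because the claim is an exact identity. On $\set{x_d=1}$ you take, consistently with the displayed formula in Definition~\ref{def:zhikov}, $u_d=\theta(1/\abs{\bar{x}})=1$ on $\support\theta'(\abs{\bar{x}})$; your radial integration then gives $\int_{\setR^{d-1}}(b_d)_d(\bar{x},1)\,d\bar{x}=1$ (with $\sigma_{d-1}$ read as the measure of the unit sphere of $\setR^{d-1}$, the same reading Lemma~\ref{lem:bd-int} requires, since the angular integral produces exactly that quantity), and doubling by the top/bottom symmetry yields $2$, not $1$. The paper's own proof instead uses the values $u_d=\pm\tfrac12$ on these faces (consistent with Figures~\ref{fig:revised-checkerboard} and~\ref{fig:2Dsub}, i.e.\ the intended $u_d$ carries a factor $\tfrac12$ absent from the printed formula), so that the two faces together reproduce exactly the single integral $\int_{(-1,1)^{d-1}}b_d(\bar{x},1)\cdot e_d\,d\bar{x}=1$. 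Your closing sentence, that ``the dimensional constants are exactly those encoded in $1/\sigma_{d-1}$, yielding the value $1$,'' asserts rather than performs this bookkeeping and, with the values of $u_d$ you state, actually produces $2$. This is a normalization inconsistency inherited from the paper rather than a structural flaw in your approach, but as written the last step does not deliver the asserted constant; you should either adopt the $\pm\tfrac12$ normalization of $u_d$ or carry the constants through explicitly and note the discrepancy.
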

For a better understanding we show in Figure~\ref{fig:boundary} the
values for~$b_2 u_2$ that we need in
Proposition~\ref{pro:match-int-bu}. 
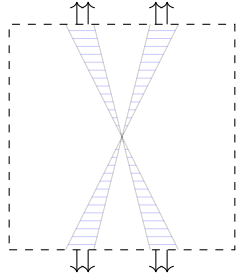
\begin{figure}[!ht]
  \centering
  \begin{tikzpicture}[scale=1.5]
    % \node at (0,1.4) {Boundary Values of~$b_2 u_2$};
    \draw[dashed] (-1,-1) -- (-1,+1) -- (+1,+1) -- (+1,-1) --cycle;

    \draw[->] (-0.4,1) -- (-0.4,1.2);
    \draw[->] (-0.3,1) -- (-0.3,1.2);
    \draw[->] (0.4,1) -- (0.4,1.2);
    \draw[->] (0.3,1) -- (0.3,1.2);
    \draw[->] (-0.4,-1) -- (-0.4,-1.2);
    \draw[->] (-0.3,-1) -- (-0.3,-1.2);
    \draw[->] (0.4,-1) -- (0.4,-1.2);
    \draw[->] (0.3,-1) -- (0.3,-1.2);

    \filldraw[pattern=horizontal lines, pattern color=blue,opacity=0.2] (-1/2,-1) -- (0,0) -- (-1/4,-1);
    \filldraw[pattern=horizontal lines, pattern color=blue,opacity=0.2] (+1/2,-1) -- (0,0) -- (+1/4,-1);
    \filldraw[pattern=horizontal lines, pattern color=blue,opacity=0.2] (-1/2,+1) -- (0,0) -- (-1/4,+1);
    \filldraw[pattern=horizontal lines, pattern color=blue,opacity=0.2]
    (+1/2,+1) -- (0,0) -- (+1/4,+1);

   %\filldraw[white] (-1/4,-1/5) rectangle (1/4,1/5);
    % \node at (0,0) {$b_2  u_2$};
  \end{tikzpicture} 
  \caption{Boundary values of~$b_2 u_2$ used for $\int_{\partial
      \Omega} (b_2 \cdot \nu)u_2 \,dx =1$.}
  \label{fig:boundary}
\end{figure}
Before we get to the proof we need the following lemma.
\begin{lemma}
  \label{lem:bd-int}
  There holds
  $(\bar{x} \mapsto b_d(\bar{x},1)) \in
  C^\infty_0((-1,1)^{d-1})$ and
  \begin{align*}
    \int_{\setR^{d-1}} b_{d} (\bar{x},1) \cdot e_d\,d\bar{x} &= 1.
  \end{align*}
\end{lemma}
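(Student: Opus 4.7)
The plan is to recognize $b_d(\bar x,1)\cdot e_d$ as the $\bar x$-divergence of an explicit vector field and then apply the divergence theorem. On the slice $x_d=1$ we have $\abs{x_d}=1$, so $\theta(\abs{\bar x}/\abs{x_d})=\theta(\abs{\bar x})$. Since the last row of $A_d$ satisfies $(A_d)_{dd}=0$ and $(A_d)_{dj}=\sigma_{d-1}^{-1}\theta(\abs{\bar x}/\abs{x_d})\abs{\bar x}^{1-d}\bar x_j$ for $j<d$, only the $\bar x$-partials contribute to the $d$-th component of $\divergence A_d$, so I would start by writing
\[
  b_d(\bar x,1)\cdot e_d=\sum_{j=1}^{d-1}\partial_j F_j(\bar x),\qquad F(\bar x):=\frac{\theta(\abs{\bar x})}{\sigma_{d-1}}\,\frac{\bar x}{\abs{\bar x}^{d-1}}.
\]

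Next I would verify the regularity and compact-support claim. Because $\theta$ vanishes on $[0,\tfrac14]$, the cutoff kills the singularity of $\abs{\bar x}^{1-d}\bar x$ at the origin, so $F\in C^\infty(\setR^{d-1})$. On $\set{\abs{\bar x}\ge\tfrac12}$, where $\theta(\abs{\bar x})=1$, the field $F$ coincides with the radial Newton kernel $\sigma_{d-1}^{-1}\bar x\abs{\bar x}^{1-d}$, which is divergence-free away from the origin by a direct computation. Consequently $\sum_j\partial_j F_j$ is smooth and supported in the compact annulus $\set{\tfrac14\le\abs{\bar x}\le\tfrac12}\subset(-1,1)^{d-1}$, proving the first assertion.

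For the integral identity, I would apply the divergence theorem on any ball $B^{d-1}_R$ with $R\ge\tfrac12$, which contains the support of $\sum_j\partial_j F_j$:
\[
  \int_{\setR^{d-1}} b_d(\bar x,1)\cdot e_d\,d\bar x=\int_{\partial B^{d-1}_R}F\cdot\nu\,dS=\frac{R^{2-d}}{\sigma_{d-1}}\cdot\abs{\partial B^{d-1}_R}=1,
\]
using $F\cdot\nu=\sigma_{d-1}^{-1}R^{2-d}$ on $\partial B^{d-1}_R$ and $\abs{\partial B^{d-1}_R}=\sigma_{d-1}R^{d-2}$ in the paper's convention for $\sigma_{d-1}$. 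There is no serious obstacle: the prefactor $1/\sigma_{d-1}$ in the definition of $A_d$ is chosen precisely so that the Newton-kernel flux across a sphere equals~$1$, so the main task is simply to unpack the matrix, identify the radial Newton kernel on $\setR^{d-1}$, and match the normalization.
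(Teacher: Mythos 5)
Your proof is correct and follows essentially the same route as the paper's: on the slice $x_d=1$ the $e_d$-component of $b_d=\divergence A_d$ is recognized as the $\bar{x}$-divergence of the cut-off Newton kernel $\sigma_{d-1}^{-1}\theta(\abs{\bar{x}})\,\bar{x}\,\abs{\bar{x}}^{1-d}$, and the divergence theorem reduces the integral to the unit flux of that kernel through a sphere (the paper integrates over $B^{d-1}_{\sqrt{d}}$, you over $B^{d-1}_R$ with $R\ge\tfrac12$ --- immaterial). The only cosmetic point is that the first assertion concerns all components of $b_d(\cdot,1)$, and the in-slice components $(b_d)_i=\partial_d (A_d)_{id}$ for $i<d$ involve $\theta'$; they are likewise smooth and supported in the annulus $\set{\tfrac14\le\abs{\bar{x}}\le\tfrac12}$, so your support argument extends verbatim.
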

\begin{proof}
  Let $B := B^{d-1}_{\sqrt{d}}(0) \subset \setR^{d-1}$. Then
  $\support (b_d(\cdot,1)) \subset B$ and
  \begin{align*}
    I := \int_{\setR^{d-1}} b_{d} (\bar{x},1) \cdot e_d\,d\bar{x}
    &=\int_B  (\divergence A_d)(\bar{x},1) \cdot
      e_d\,d\bar{x}.
  \end{align*}
  Let use define $g \,:\, \setR^{d-1} \to \setR^{d-1}$ as
  \begin{align*}
    g(\bar{x}) &:= \theta
                 \bigg(\frac{\abs{\bar{x}}}{\abs{x_d}} \bigg)
                 \frac{1}{\sigma_{d-1}} \frac{\bar{x}}{\abs{\bar{x}}^{d-1}}
  \end{align*}
  Then by the definition of~$A_d$ we get
  \begin{align*}
    (\divergence A_d)(\bar{x},1) \cdot
    e_d &= \divergence_{\bar{x}} g.
  \end{align*}
  Thus, by the theorem of Gau{\ss}
  \begin{align*}
    I
    &=
      \int_{B} \divergence_{\bar{x}} g \,d\bar{x} =
      \int_{\partial B} g \cdot \nu\,dS =
      \int_{\partial B}                  \frac{1}{\sigma_{d-1}} \abs{\bar{x}}^{2-d}
      \,dS = 1
  \end{align*}
  using that~$\sigma_{d-1}$ is the surface area of the
  $d-1$-dimensional sphere. This proves the claim.
\end{proof}
We can now prove Proposition~\ref{pro:match-int-bu}.
\begin{proof}[Proof of Proposition~\ref{pro:match-int-bu}]
  Note that~$b_d=0$ on $\partial \Omega$ except for the
  sets~$\set{x_d=\pm 1} \cap \partial \Omega$. On these sets $u_d$ takes the
  values~$\pm \frac 12$ and $\nu = \pm e_d$. Moreover, $b_d\cdot e_d$ is even
  with respect to~$x_d$. Thus,
  \begin{align*}
    \int_{\partial \Omega} (b \cdot \nu) u_d\,dS = 
    \int_{(-1,1)^{d-1}} b(\bar{x},1) \cdot e_d\,d\bar{x} = 1
  \end{align*}
  using Lemma~\ref{lem:bd-int}.
\end{proof}

\subsection{Cantor Sets}
\label{ssec:cantor-sets}

In the Zhikov's example the contact set~$\frS$ consists  just of
one point, the origin, which has dimension zero. For our new examples
we want to use contact sets of higher, fractal dimension. For this
reason we start with the definition of a few fractal Cantor sets that
we need later.

We begin with the one dimensional generalized Cantor
set~$\frC_\lambda$ with $\lambda \in (0,\frac 12)$, which is also
known as the (1-2$\lambda$)-middle Cantor set. We start with the
interval~$\frC_{\lambda,0} := (-\frac 12,\frac 12)$. Then we
define~$\frC_{\lambda,k+1}$ inductively by removing the
middle~$1-2\lambda$ parts from~$\frC_{\lambda,k}$. In particular, we
define~$\frC_\lambda := \cap_{k \geq 1} \frC_{\lambda,k}$.  The
corresponding Cantor measure~$\mu_\lambda$ (also Cantor distribution)
is then defined as the weak limit of the
measures~$\mu_{\lambda,k}:=(2\lambda)^{-k}
\indicator_{\frC_{\lambda,k}}\,dx$. The factor $(2 \lambda)^{-k}$ is
chosen such
that~$\mu_{\lambda,k}([-\frac 12,\frac 12])=1= \mu([-\frac 12,\frac
12])$.  Thus, $\mu_\lambda(\setR)=1$ and
$\support \mu_\lambda = \frC_\lambda$.  The fractal dimension
of~$\frC_\lambda$ is
$\dim(\frC_\lambda) = \log(2)/ \log(1/\lambda) \in (0,1)$,
i.e. $\lambda = 2^{-\frD}$.

We will also need the~$m$-dimensional Cantor sets~$\frC_\lambda^m$ and
its distribution~$\mu^m_\lambda$, which are just the Cartesian
products of $\frC_\lambda$ and $\mu_\lambda$. Its fractal dimension
is~$\dim \frC_\lambda^m = m \dim \frC_\lambda = m \log(2)/ \log(1/
\lambda) \in (0,m)$. Note that
$\frC_\lambda^m = \cap_{k \geq 1} \frC_{\lambda,k}^m$.

\begin{figure}[!ht]
  \centering
  \begin{tikzpicture}[scale=8]
    \draw[ultra thick] (0,0) -- (1,0);
    \draw[ultra thick] (0,-.05) -- (1/3,-0.05);
    \draw[ultra thick] (2/3,-.05) -- (1,-0.05);
    \draw[ultra thick] (0,-.1) -- (1/9,-0.1);
    \draw[ultra thick] (1/9+1/9,-.1) -- (1/3,-0.1);
    \draw[ultra thick] (2/3,-.1) -- (2/3+1/9,-0.1);
    \draw[ultra thick] (2/3+2/9,-.1) -- (1,-0.1);
    \node[anchor=east] at (0,0) {$\frC_{\lambda,0}$};
    \node[anchor=east] at (0,-0.05) {$\frC_{\lambda,1}$};
    \node[anchor=east] at (0,-0.1) {$\frC_{\lambda,2}$};
  \end{tikzpicture}
  
  \caption{Construction of Generalized Cantor Set~$\frC_{\frac 13}$.}
  \label{fig:cantor-lambda}
\end{figure}
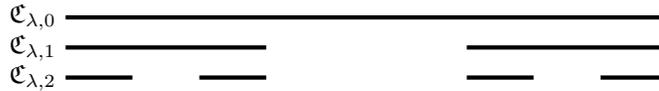
In the construction of our fractal examples we need a
smooth approximation of the indicator function
$\indicator_{\set{ d(\bar{x},\frC^{m}_\lambda) \leq 3
    \abs{\hat{x}}}}$, where
$x= (\bar{x},\hat{x}) \in \setR^m \times \setR^{d-m}$. This is the
purpose of the following lemma.

\begin{lemma}
  \label{lem:smooth-indicator}
  Let~$\lambda \in (0,\frac 12)$, $1 \leq m \leq d$ and
  let~$\frS := \frC^m_\lambda \times \setR^{d-m}$. We use the
  notation $x=(\bar{x},\hat{x}) \in \setR^m \times \setR^{d-m}$. Let
  $\frac 14 \leq \tau_1 < \tau_2 \leq 4$ and~$\tau_2-\tau_1 \geq \frac
  14$. Then there exists~$\rho \in C^\infty(\setR^d \setminus \frS)$
  such that
  \begin{enumerate}
  \item \label{itm:rho1}
    $\indicator_{\set{d(\bar{x},\frC^m_\lambda) \leq \tau_1 \abs{\hat{x}}}}
    \leq \rho \leq \indicator_{\set{ d(\bar{x},\frC^m_\lambda) \leq
        \tau_2 \abs{\hat{x}}}}$.
  \item \label{itm:rho2}
    $\abs{\nabla \rho(\bar{x},\hat{x})} \lesssim \abs{\hat{x}}^{-1}
    \indicator_{\set{\tau_1 \abs{\hat{x}} \leq d(\bar{x},\frC^m_\lambda) \leq
        \tau_2 \abs{\hat{x}}}}$.
  \end{enumerate}
  In particular, $\rho=1$
  on~$\set{d(\bar{x},\frC^m_\lambda) \leq \tau_1 \abs{\hat{x}}}$ and
  $\rho=0$ on $\set{\tau_2 \abs{\hat{x}} \leq d(\bar{x},\frC^m_\lambda)}$.
\end{lemma}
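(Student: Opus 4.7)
The plan is to build $\rho$ by smoothing the Lipschitz function $d(\bar{x},\frC^m_\lambda)$ with a mollifier whose scale is proportional to $\abs{\hat{x}}$ and composing with a scalar cutoff. I fix a standard nonnegative radially symmetric mollifier $\eta\in C^\infty_0(B_1^m(0))$ with $\int\eta = 1$, set $\kappa := (\tau_2-\tau_1)/8 \in [1/32,\tfrac12)$, and define for $\hat{x}\neq 0$
\[
D(\bar{x},\hat{x}) := \int_{\setR^m} d(\bar{y},\frC^m_\lambda)\, \frac{1}{(\kappa\abs{\hat{x}})^m}\, \eta\!\left(\frac{\bar{x}-\bar{y}}{\kappa\abs{\hat{x}}}\right) d\bar{y}.
\]
Then I pick $\psi\in C^\infty(\setR;[0,1])$ with $\psi = 1$ on $(-\infty,\tau_1+\kappa]$, $\psi = 0$ on $[\tau_2-\kappa,\infty)$ and $\abs{\psi'}\lesssim 1$, and set $\rho(\bar{x},\hat{x}) := \psi(D(\bar{x},\hat{x})/\abs{\hat{x}})$ for $\hat{x}\neq 0$, extended by $0$ on $\{\hat{x}=0\}\setminus\frS$.

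The function $D$ is manifestly smooth on $\{\hat{x}\neq 0\}$. Because $d(\cdot,\frC^m_\lambda)$ is $1$-Lipschitz, averaging gives $\abs{D-d(\cdot,\frC^m_\lambda)}\le \kappa\abs{\hat{x}}$; passing the $\bar{x}$-derivative onto the convolution yields $\abs{\nabla_{\bar{x}}D}\le 1$; and, after the substitution $\bar{u}=(\bar{x}-\bar{y})/(\kappa\abs{\hat{x}})$ which rewrites $D = \int d(\bar{x}-\kappa\abs{\hat{x}}\bar{u},\frC^m_\lambda)\,\eta(\bar{u})\,d\bar{u}$, the chain rule plus the a.e.\ bound $\abs{\nabla d}\le 1$ yield $\abs{\nabla_{\hat{x}}D}\lesssim \kappa$. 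With these estimates in hand, the $\kappa$-margin is exactly what is needed to enforce \ref{itm:rho1}: $d\le \tau_1\abs{\hat{x}}$ forces $D/\abs{\hat{x}}\le \tau_1+\kappa$, hence $\rho=1$, and symmetrically $d\ge \tau_2\abs{\hat{x}}$ forces $\rho=0$. On $\{\psi'(D/\abs{\hat{x}})\neq 0\}$ the argument is trapped in $[\tau_1+\kappa,\tau_2-\kappa]$, which forces $d/\abs{\hat{x}}\in[\tau_1,\tau_2]$ and $D\eqsim\abs{\hat{x}}$; the chain rule then gives
\[
\abs{\nabla\rho} \lesssim \abs{\psi'}\bigg( \frac{\abs{\nabla D}}{\abs{\hat{x}}} + \frac{D}{\abs{\hat{x}}^2}\bigg)\lesssim \abs{\hat{x}}^{-1},
\]
which is \ref{itm:rho2}.

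The only delicate point is smoothness across $\{\hat{x}=0\}\setminus \frS$, where the mollification scale degenerates. Near any such point $(\bar{x}_0,0)$ with $\bar{x}_0\notin \frC^m_\lambda$ one has $d(\cdot,\frC^m_\lambda)\ge \delta_0>0$ in a neighborhood of $\bar{x}_0$, so $D\ge \delta_0/2$ on that neighborhood for $\abs{\hat{x}}$ small; hence $D/\abs{\hat{x}}\to\infty$ uniformly and $\rho$ vanishes identically on a full open neighborhood of $(\bar{x}_0,0)$ in $\setR^d\setminus\frS$, making the extension trivially $C^\infty$. The main technical obstacle I expect is the clean justification of $\abs{\nabla_{\hat{x}}D}\lesssim \kappa$: differentiating under the integral with a merely Lipschitz integrand has to be justified (either via Rademacher plus dominated convergence, or by moving the $\hat{x}$-derivative onto the mollifier and exploiting that $\nabla\eta$ has zero mean). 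Once this is settled, the rest is bookkeeping with the margin~$\kappa$.
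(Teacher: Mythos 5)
Your construction is correct, but it takes a different route from the paper's proof, so let me compare the two. The paper takes the sharp cutoff $\indicator_{\set{d(\bar{x},\frC^m_\lambda) \leq \tau \abs{\hat{x}}}}$ for an intermediate threshold $\tau$ and mollifies this indicator directly with a $d$-dimensional mollifier at the point-dependent scale $\abs{\hat{x}}/100$; the small factor $1/100$ guarantees that the smeared jump set stays inside the band $\set{\tau_1\abs{\hat{x}} \leq d(\bar{x},\frC^m_\lambda)\leq \tau_2\abs{\hat{x}}}$, and (i), (ii) are then quoted as standard mollification estimates, while smoothness across $\set{\hat{x}=0}$ away from the Cantor set follows, exactly as in your argument, because $\rho$ vanishes identically near such points. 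You instead mollify the $1$-Lipschitz distance function $d(\cdot,\frC^m_\lambda)$ (in $\bar{x}$ only, at scale $\kappa\abs{\hat{x}}$) and compose the ratio $D/\abs{\hat{x}}$ with a fixed scalar cutoff $\psi$. What your version buys is that every estimate is explicit and elementary: the $1$-Lipschitz property yields the margin $\abs{D-d}\leq\kappa\abs{\hat{x}}$, hence (i) and the correct support of $\nabla\rho$, and the bounds $\abs{\nabla_{\bar{x}}D}\leq 1$, $\abs{\nabla_{\hat{x}}D}\lesssim\kappa$ together with $D\eqsim\abs{\hat{x}}$ on $\set{\psi'\neq 0}$ give (ii) by the chain rule; in the paper's version the analogous chain-rule bookkeeping for the variable mollification scale is left implicit in the phrase ``standard estimate.'' The price you pay is the extra composition layer and the need to justify differentiating the mollified Lipschitz integrand, which you correctly flag; either of the two justifications you mention (difference quotients dominated by the Lipschitz constant, or moving derivatives onto the kernel) closes this, and even more simply one can note that $D$ is smooth on $\set{\hat{x}\neq 0}$ by differentiating the explicit smooth kernel, while the Lipschitz bounds on $D$ in $\bar{x}$ and $\hat{x}$ follow from difference quotients alone and then control $\nabla D$ pointwise. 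Also note your choice $\kappa=(\tau_2-\tau_1)/8$ together with $\tau_2-\tau_1\geq\frac14$ is what makes the implicit constant in $\abs{\psi'}\lesssim 1$, and hence in (ii), absolute, which is needed since the lemma's constants must not depend on $\tau_1,\tau_2$.
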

\begin{proof}
  Let $\tau := \frac{\tau_2-\tau_1}{2}$. The function
  $\indicator_{\set{\tau \abs{\hat{x}} \geq d(\bar{x},\frC^m_\lambda) }}$
  would satisfy~\ref{itm:rho1} but not the smoothness
  requirement. Therefore, we need to mollify this function depending
  on the~$\abs{\hat{x}}$-value. For this let $\set{\psi_t}$ denote a
  standard mollifier, i.e. $\support(\psi_1) = \overline{B^d_1(0)}$,
  $\psi_1\geq 0$, $\int \psi_1\,dx=1$, $\psi_1 \in C_0^\infty(B^d_1(0))$ and  
  $\psi_t(x) = t^{-d}\psi(x/t)$.
  \begin{align*}
    \rho(x) &:= \big(\indicator_{\set{\tau
              \abs{\hat{x}} \geq d(\bar{x},\frC^m_\lambda) }} *
              \psi_{\frac{\abs{\hat{x}}}{100}}\big)(x). 
  \end{align*}
  The factor~$\frac{1}{100}$ in the scaling of the mollifier is chosen
  so small such that the smeared version of the jump set
  $\set{\tau\abs{\hat{x}} = d(\bar{x},\frC^m_\lambda)}$ stays inside
  $\set{\tau_1 \abs{\hat{x}} \leq d(\bar{x},\frC^m_\lambda) \leq \tau_2
    \abs{\hat{x}}}$. This proves~\ref{itm:rho1}. Now, the standard estimate
  implies~\ref{itm:rho2}.  It is obvious
  that~$\rho \in C^\infty(\setR^d \setminus (\setR^m \times
  \set{0}^{d-m}))$. Moreover, since $\rho=0$ on $\set{\tau_2 \abs{\hat{x}} < d(\bar{x},\frC^m_\lambda)}$ it follows that~$u$ is also~$C^\infty$ at $(\setR^{m} \times \{0\}^{d-m}) \setminus \frS$. This proves
  that~$u \in C^\infty(\setR^d \setminus \frS)$.
\end{proof}
The following two lemmas provides further technical estimates that are used
later to determine the integrability of our fractal examples. 

\begin{lemma}
  \label{lem:cantor-estimates}
  Let~$\lambda \in (0,\frac 12)$, $1\leq m \leq d$ and
  $\frD := \dim(\frC_\lambda^m) = -m \log(2)/\log(\lambda)$. We
  use the notation $x=(\bar{x},\hat{x}) \in \setR^m \times \setR^{d-m}$.
  Then we have the following properties: 
  \begin{enumerate}
    \setlength{\itemsep}{1ex}%
  \item \label{itm:cantor-estimates1} For  every
    ball~$B^m_r(\bar{x})$ there
    holds~$\mu^m_\lambda (B^m_r(\bar{x})) \lesssim
    \indicator_{\set{d(\bar{x},\frC^m_\lambda)\le r}} r^{\frD}$.

  \item \label{itm:cantor-estimates2} For all $r>0$ there holds
    $
    \mathcal{L}^m\big(                                \set{
      \bar{x}:   
      d(\bar{x},\frC^m_\lambda) \leq  r } \big)
    \lesssim r^{m-\frD}$.
  \item \label{itm:cantor-estimates3}
    For all $\tau \in (0,4]$ there
    holds
    \begin{align*}
      \bigabs{\big((\mu_\lambda^m \times \delta_0^{d-m}) *
      \indicator_{\set{\abs{\bar{x}}\leq \tau\abs{\hat{x}}}}\big)(x)} 
      \lesssim
      \indicator_{ \set{ 
      d(\bar{x},\frC^m_\lambda) \leq\tau \abs{\hat{x}}}}(x)
      \,\abs{\hat{x}}^\frD.
    \end{align*}
  \end{enumerate}
\end{lemma}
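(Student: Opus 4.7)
The plan is to dispatch the three estimates in sequence, deriving (iii) essentially for free from (i) once the geometric bookkeeping on $\frC^m_\lambda$ is in place. Throughout I use that $\mu^m_\lambda = \mu_\lambda^{\otimes m}$ is a product measure supported on $\frC^m_\lambda$, together with the algebraic identity $2^m = \lambda^{-\frD}$ coming from the definition $\frD = m \log 2/\log(1/\lambda)$.

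For (i), the indicator factor on the right is automatic: since $\support \mu^m_\lambda = \frC^m_\lambda$, the ball $B^m_r(\bar x)$ carries no mass unless $d(\bar x, \frC^m_\lambda) \le r$. For the quantitative bound I pick the unique $k \ge 0$ with $\lambda^{k+1} < r \le \lambda^k$ and exploit the level-$k$ structure. The one-dimensional set $\frC_{\lambda,k}$ is a disjoint union of $2^k$ intervals of length $\lambda^k$, any two of which are separated by at least $(1-2\lambda)\lambda^{k-1} = c_\lambda \lambda^k$, with $c_\lambda := (1-2\lambda)/\lambda > 0$. Taking products, $\frC^m_{\lambda,k}$ decomposes as $2^{km}$ cubes of side $\lambda^k$, with pairwise separation $\gtrsim \lambda^k$, each of $\mu^m_\lambda$-mass $2^{-km}$. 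A ball of radius $r \le \lambda^k$ meets only $O_{m,\lambda}(1)$ of them, so
\[
\mu^m_\lambda(B^m_r(\bar x)) \lesssim 2^{-km} = (\lambda^k)^\frD \lesssim r^\frD.
\]

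For (ii) I reuse the same partition at the matched scale $\lambda^k \sim r$: the set $\set{d(\bar x, \frC^m_\lambda) \le r}$ is covered by the $2^{km}$ level-$k$ cubes enlarged by $r$ in every coordinate, each of Lebesgue measure $\lesssim r^m$, giving
\[
\mathcal{L}^m\bigl(\set{d(\bar x, \frC^m_\lambda) \le r}\bigr) \lesssim 2^{km} r^m = \lambda^{-k\frD} r^m \lesssim r^{m-\frD}.
\]
Claim (iii) then falls out of (i): the Dirac factor $\delta^{d-m}_0$ forces $\hat y = 0$, so at $x = (\bar x, \hat x)$ the convolution reduces to
\[
\int \indicator_{\set{|\bar x - \bar y| \le \tau |\hat x|}} \, d\mu^m_\lambda(\bar y) = \mu^m_\lambda\bigl(B^m_{\tau |\hat x|}(\bar x)\bigr),
\]
and applying (i) with $r = \tau|\hat x|$ and absorbing $\tau^\frD \le 4^\frD$ into the implicit constant yields the advertised bound. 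The one delicate point, which I expect to be the main obstacle, is the separation estimate for level-$k$ cubes used in (i); once this is in hand, both (ii) and (iii) are essentially covering-and-support bookkeeping, and the rest of the proof is a straight consequence of the self-similar scaling $2^m = \lambda^{-\frD}$.
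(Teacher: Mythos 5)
Your proposal is correct and follows essentially the same route as the paper's proof: fix the scale $k$ with $\lambda^{k+1}<r\le\lambda^k$, use the $2^{mk}$ level-$k$ cubes of side $\lambda^k$ and mass $2^{-mk}$ together with $2^m=\lambda^{-\frD}$, count boundedly many cubes meeting $B^m_r(\bar{x})$ for (i), cover the $r$-neighbourhood by enlarged cubes for (ii), and reduce (iii) to (i) via the identity with $r=\tau\abs{\hat{x}}$. The separation estimate you single out as the delicate point is fine (and could even be replaced by a simple volume-packing count), so no gap remains.
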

\begin{proof}
  Choose~$k \in \setZ$ such that
  $\lambda^{k+1} < r < \lambda^k$. Let $A_1, \dots, A_{2^{mk}}$
  denote the connected components of~$\frC^m_{\lambda,k}$.
  
  We begin with~\ref{itm:cantor-estimates2}.  We estimate
  \begin{align*}
    \mathcal{L}^m\big( \set{
    \bar{x}: 
    d(\bar{x},\frC^m_\lambda) < r } \big)
    &\lesssim
      \mathcal{L}^m\big( \set{
      \bar{x}: 
      d(\bar{x},\frC^m_{\lambda,k}) < r } \big)
    \\
    &\lesssim
      \sum_{j=1}^{2^{mk}} \mathcal{L}^m\big( \set{
      \bar{x}: 
      d(\bar{x},A_j) < r } \big)
    \\
    &\leq 2^{mk}\, (\lambda^k + 2r)^m
    \\
    &= \lambda^{-\frD k}\, (\lambda^k + 2r)^m
    \\
    &\lesssim r^{m-\frD}
  \end{align*}
  using $2^m = \lambda^{-\frD}$. This proves~\ref{itm:cantor-estimates2}.

  Let us prove~\ref{itm:cantor-estimates1}. If
  $d(\bar{x}, \frC^m_\lambda)>r$, then
  $\mu^m_\lambda (B^m_r(\bar{x}))=0$. This explains the indicator
  function in~\ref{itm:cantor-estimates1}. Clearly,
  \begin{align*}
    \mu^m_\lambda (B^m_r(\bar{x}))=\sum_{l=1}^{2^{mk}}  \mu^m_\lambda (B^m_r(\bar{x}) \cap A_l).
  \end{align*}
  By the construction of $\frC^m_{\lambda,k}$ the sets~$A_j$ are pairwise disjoint translates
  of~$(0,\lambda^k)^m$. So the number of indices $l$ such that the intersection $B^m_r(\bar{x}) \cap A_l$ is nonempty does not exceed $2^m$. Thus
  \begin{align*}
  \mu^m_\lambda (B^m_r(\bar{x}))\leq 2^m\,\mu_\lambda^m(A_1)
  = 2^m\, 2^{-mk} = 2^m\, \lambda^{k \frD}   \lesssim r^{\frD}
  \end{align*}
  using again $2^m = \lambda^{-\frD}$ and $\lambda^k\leq r\lambda^{-1}$. This finishes the proof of~\ref{itm:cantor-estimates1}.

  Let us prove~\ref{itm:cantor-estimates3}. Note that
  \begin{align*}
    \big((\mu_\lambda^m \times \delta_0^{d-m}) *
    \indicator_{\set{\abs{\bar{x}}\leq \tau\abs{\hat{x}}}} \big)(x)
    &= \int_{\setR^m}
      \indicator_{\set{\abs{\bar{x}-\bar{y}}\leq
      \tau
      \abs{\hat{x}}}}(\bar{y})\,d\mu_\lambda^m(\bar{y}) =
      \mu^m_\lambda (B^m_{\tau \abs{\hat{x}}}(\bar{x})).
  \end{align*}
  Now, the claim follows by an application
  of~\ref{itm:cantor-estimates1} with $r=\tau \abs{\hat{x}}$.
\end{proof}

The following lemma will be useful to determine later the integrability of
our fractal examples.
\begin{lemma}
  \label{lem:cantor-weak} 
  Let~$\lambda \in (0,\frac 12)$, $1\leq m \leq d$ and
  \begin{align*}
  \frD := \dim(\frC_\lambda^m) = -m \log(2)/\log(\lambda).
  \end{align*}

We use
  the notation $x=(\bar{x},\hat{x}) \in \setR^m \times \setR^{d-m}$. Then
  \begin{enumerate}
    \setlength{\itemsep}{1ex}
  \item \label{itm:cantor-weak1}
    $\abs{\hat{x}}^{-\beta} \indicator_{\set{\abs{\bar{x}} \leq 4 \abs{\hat{x}}}} \in
    L^{\frac d\beta,\infty}(\Rd)$ for all~$0< \beta \leq d$.
  \item \label{itm:cantor-weak2}
    $\abs{\hat{x}}^{-\beta} \indicator_{\set{d(\bar{x},\frC^m_\lambda)
        \leq 4\, \abs{\hat{x}}}} \in L^{\frac{d-\frD}{\beta},\infty}(\Rd)$
    for all $0< \beta \leq d-\frD$.
  \end{enumerate}
\end{lemma}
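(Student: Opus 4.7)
The plan is to verify the weak $L^p$ estimates directly from the definition $\norm{f}_{L^{p,\infty}}^p = \sup_{t>0} t^p \abs{\set{\abs{f}>t}}$, by bounding the distribution function of each expression. In both cases the relevant level set factors cleanly into an $\hat{x}$-condition and a $\bar{x}$-condition, which lets me apply Fubini.

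For part~\ref{itm:cantor-weak1}, note that $\abs{\hat{x}}^{-\beta} \indicator_{\set{\abs{\bar{x}} \leq 4\abs{\hat{x}}}}(x) > t$ is equivalent to $\abs{\hat{x}} < t^{-1/\beta}$ together with $\abs{\bar{x}} \leq 4\abs{\hat{x}}$. For each fixed $\hat{x}$ the $\bar{x}$-slice has $\mathcal{L}^m$-measure $\lesssim \abs{\hat{x}}^m$, so integrating in $\hat{x}$ using polar coordinates in $\setR^{d-m}$ gives
\begin{align*}
  \Bigabs{\Bigset{\abs{\hat{x}}^{-\beta} \indicator_{\set{\abs{\bar{x}} \leq 4\abs{\hat{x}}}} > t}}
  \lesssim \int_0^{t^{-1/\beta}} r^{m}\, r^{d-m-1}\, dr
  = \int_0^{t^{-1/\beta}} r^{d-1}\, dr
  \lesssim t^{-d/\beta},
\end{align*}
which gives exactly the claimed membership in $L^{d/\beta,\infty}(\Rd)$.

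For part~\ref{itm:cantor-weak2}, the argument is the same except that the $\bar{x}$-slice $\set{\bar{x} : d(\bar{x},\frC^m_\lambda) \leq 4\abs{\hat{x}}}$ has $\mathcal{L}^m$-measure $\lesssim \abs{\hat{x}}^{m-\frD}$ by Lemma~\ref{lem:cantor-estimates}\ref{itm:cantor-estimates2} applied with $r = 4\abs{\hat{x}}$. This replaces $r^{m}$ by $r^{m-\frD}$ in the integrand, producing
\begin{align*}
  \Bigabs{\Bigset{\abs{\hat{x}}^{-\beta} \indicator_{\set{d(\bar{x},\frC^m_\lambda) \leq 4\abs{\hat{x}}}} > t}}
  \lesssim \int_0^{t^{-1/\beta}} r^{d-\frD-1}\, dr
  \lesssim t^{-(d-\frD)/\beta},
\end{align*}
where the final integral converges since $\frD < m \leq d$ (recall $\lambda \in (0,\tfrac12)$ forces $\frD = m\log 2/\log(1/\lambda) < m$). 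This gives the claimed membership in $L^{(d-\frD)/\beta,\infty}(\Rd)$.

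There is no real obstacle: the only things to be careful about are the dimensional split between $\bar{x}$ and $\hat{x}$, the use of Lemma~\ref{lem:cantor-estimates}\ref{itm:cantor-estimates2} to absorb the Cantor set into a power of $\abs{\hat{x}}$, and the condition $\beta \leq d$ resp.\ $\beta \leq d-\frD$, which is only needed to make the statement nontrivial (so that the exponent $d/\beta$ resp.\ $(d-\frD)/\beta$ is at least $1$); the bound on the distribution function itself requires only $\beta > 0$.
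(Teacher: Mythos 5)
Your proof is correct and follows essentially the same route as the paper: bound the distribution function by splitting the level set into the condition $\abs{\hat{x}} \leq t^{-1/\beta}$ and the $\bar{x}$-slice condition, estimate the slice measure by $\abs{\hat{x}}^m$ (resp.\ $\abs{\hat{x}}^{m-\frD}$ via Lemma~\ref{lem:cantor-estimates}~\ref{itm:cantor-estimates2}), and integrate in $\hat{x}$. Your closing remark is also accurate: the estimate only needs $\beta>0$, and the restriction $\beta \leq d$ resp.\ $\beta \leq d-\frD$ merely keeps the Marcinkiewicz exponent at least $1$.
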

\begin{proof}
  Let $\beta>0$. We begin with~\ref{itm:cantor-weak1}. Let
  $f := \abs{\hat{x}}^{-\beta} \indicator_{\set{\abs{\bar{x}} \leq 4
      \abs{\hat{x}}}}$ and~$\gamma>0$.  If $\abs{f}> \gamma$, then
  $\abs{\hat{x}} \leq \gamma^{-\frac 1 \beta}$. Thus,
  \begin{align*}
    \int_\Rd \gamma^{\frac d\beta} \indicator_{\set{\abs{f}> \gamma}}
    \,dx
    &\leq \int_\Rd \gamma^{\frac d\beta} \indicator_{\set{\abs{\hat{x}} \leq 
      \gamma^{-\frac{1}{\beta}}}}
      \indicator_{\set{\abs{\bar{x}} \leq 4 \abs{\hat{x}}}}
      \,dx
    % \\
    \\
    &\lesssim \gamma^{\frac d\beta} \int_{\setR^m}  \indicator_{\set{\abs{\hat{x}} \leq 
      \gamma^{-\frac 1\beta}}} \abs{\hat{x}}^{m} \,d\hat{x}
    \\
    &\lesssim \gamma^{\frac{d}\beta-\frac{d}{\beta}} = 1.
  \end{align*}
  This proves~\ref{itm:cantor-weak1}.
  
  Now, let
  $g := \abs{\hat{x}}^{-\beta} \indicator_{\set{d(\bar{x},\frC^m_\lambda)
      \leq 4\, \abs{\hat{x}}}} \in L^{s,\infty}(\Rd)$ and~$\gamma>0$. If
  $\abs{g} > \gamma$, then $\abs{\hat{x}} \leq \gamma^{-\frac 1
    \beta}$. Thus, with
  Lemma~\ref{lem:cantor-estimates} we get
  \begin{align*}
    \int_\Rd \gamma^{\frac{d-\frD}\beta} \indicator_{\set{d(\bar{x},\frC^m_\lambda)
    \leq 4\, \abs{\hat{x}}}}
    \,dx
    &\leq \int_\Rd \gamma^{\frac{d-\frD}\beta} \indicator_{\set{\abs{\hat{x}} \leq 
      \gamma^{-\frac{1}{\beta}}}}
      \indicator_{\set{d(\bar{x},\frC^m_\lambda)
      \leq 4\, \abs{\hat{x}}}}
      \,dx
    % \\
    \\
    &\lesssim \gamma^{\frac{d-\frD}\beta} \int_{\setR^m}  \indicator_{\set{\abs{\hat{x}} \leq 
      \gamma^{-\frac 1\beta}}} \abs{\hat{x}}^{m-\frD} \,d\hat{x}
    \\
    &\lesssim \gamma^{\frac{d-\frD}\beta-\frac{d-\frD}{\beta}} = 1.
  \end{align*}
  This proves~\ref{itm:cantor-weak2}.
\end{proof}

\begin{remark}
  \label{rem:cantor-weak}
  Note that integrability exponents $\frac{d}{\beta}$ and
  $\frac{d-\frD}{\beta}$ in Lemma~\ref{lem:cantor-weak} are sharp. In
  particular,
  $\abs{\hat{x}}^{-\beta} \indicator_{\set{\abs{\bar{x}} \leq 4
      \abs{\hat{x}}}} \notin L^{\frac d\beta}(\Omega)$ and
  $\abs{\hat{x}}^{-\beta} \indicator_{\set{d(\bar{x},\frC^m_\lambda)
      \leq 4\, \abs{\hat{x}}}} \notin
  L^{\frac{d-\frD}{\beta}}(\Omega)$ with $\Omega = (-1,1)^d$.
\end{remark}

\subsection{Construction of Fractal Examples}
\label{ssec:construction}

We can now construct our fractal examples, namely the functions~$u$
and~$b$. The contact set~$\frS$ will in our examples be a subset
of~$\setR^{d-1} \times \set{0}$ or $\set{0}^{d-1} \times \setR$. In
particular, we split~$\setR^d$ into~$\setR^{d-1} \times \setR$ and
write $x= (\bar{x},x_d) \in \setR^{d-1} \times \setR$.

We will provide some pictures
after the formal definition.
\begin{definition}[Fractal Examples]
  \label{def:fractal-examples}
  Let $\Omega := (-1,1)^d$ with~$d\geq 2$.  Let $u_d,A_d,b_d,$
  be as Definition~\ref{def:zhikov}. Let $1 < p_0 < \infty$. We
  define~$u,A,b$ on~$\overline{\Omega}$ distinguishing three cases:
  \begin{enumerate}
  \item (Matching the dimension; Zhikov) $p_0=d$:

    Let $u:= u_d$, $A := A_d$, $b := b_d$, $\frS := \set{0}$ and~$\frD
    := \dim \frS =0$.
  \item (Sub-dimensional) $1<p_0<d$:

    Let $\frS := \frC^{d-1}_\lambda \times \set{0}$ and
    $\frD= \dim(\frS)=\frac{(d-1)\log 2}{\log(1/\lambda)}$,
    where~$\lambda \in (0,\frac 12)$ is chosen such that
    $p_0 = d - \frD$.  Let $\rho \in C^\infty(\setR^d\setminus \frS)$
    be such that (using Lemma~\ref{lem:smooth-indicator})
    \begin{enumerate}[label={(\roman{*})}]
    \item
      $\indicator_{\set{d(\bar{x},\frC^{d-1}_\lambda) \leq 2 \abs{x_d}}}
      \leq \rho \leq \indicator_{\set{d(\bar{x},\frC^{d-1}_\lambda)
          \leq 4 \abs{x_d}}}$.
    \item
      $\abs{\nabla \rho} \lesssim \abs{x_d}^{-1}
      \indicator_{\set{2 \abs{x_d} \leq d(\bar{x},\frC^{d-1}_\lambda) \leq
          4 \abs{x_d}}}$.
    \end{enumerate}
    We define
    \begin{align*}
      u &:= \sgn(x_d)\, \rho(x),
      \\
      A &:= (\mu^{d-1}_\lambda \times \delta_0) * A_d,
      \\
      b &:= \divergence A.
    \end{align*}

  \item (Super-dimensional) $p_0 > d$:
    
    Let $\frS := \set{0}^{d-1} \times \frC_\lambda$ and
    $\frD= \dim(\frS)=\frac{\log 2}{\log(1/\lambda)}$,
    where~$\lambda \in (0,\frac 12)$ is chosen such that
    $p_0 = \frac{d-\frD}{1-\frD}$.  Let $\rho \in C^\infty(\setR^d \setminus \frS)$ be such that (using
    Lemma~\ref{lem:smooth-indicator})
    \begin{enumerate}[label={(\roman{*})}]
    \item
      $\indicator_{\set{d(x_d,\frC_\lambda) \leq 2
      \abs{\bar{x}}}}
      \leq \rho \leq \indicator_{\set{ d(x_d,\frC_\lambda) \leq
          4\abs{\bar{x}}}}$.
    \item
      $\abs{\nabla \rho} \lesssim \abs{\bar{x}}^{-1}
      \indicator_{\set{2\abs{\bar{x}} \leq d(x_d,\frC_\lambda) \leq
          4\abs{\bar{x}}}}$.
    \end{enumerate}
    We define
    \begin{align*}
      u &:= (\delta_0^{d-1} \times \mu_\lambda) * u_d
      \\
      A(x) &:=
             \frac{1}{\sigma_{d-1}} \abs{\bar{x}}^{1-d}
             \begin{pmatrix}
               0 & -\bar{x} \\
               \bar{x}^T & 0
             \end{pmatrix}
                            \rho(x)
      \\
      b &:= \divergence A.
    \end{align*}
  \end{enumerate}
\end{definition}
\begin{remark}
  We will use the functions~$u,b$ from the
  Definition~\ref{def:fractal-examples} with the exponent~$p_0$ later
  to construct a variable exponent $p\,:\, (-1,-1)^d \to (1,\infty)$
  with saddle point value~$p_0$ that provides a Lavrentiev gap,
  see~Subsection~\ref{ssec:variable-exponents}. This explains that we
  use~$p_0$ as a parameter to label our fractal examples. Another
  reason is that~$\nabla u$ is in the Marcinkiewicz (weak Lebesgue)
  space $L^{p_0,\infty}$ and $b \in L^{p_0',\infty}(\Omega)$, see
  Corollary~\ref{cor:ub-p0} and Remark~\ref{rem:ub-p0}.
\end{remark}

Let us provide a few pictures for the 2D case to illustrate our
fractal examples from Definition~\ref{def:fractal-examples}.  Using
the $\frac 13$-Cantor set~$\frC^1_{\frac 13}$ we provide in
Figure~\ref{fig:2Dsub}, resp. Figure~\ref{fig:2Dsup}, the
sub-dimensional, resp. super-dimensional case.  The case of matching the dimension
is already considered in Figure~\ref{fig:zhikov}. The left picture
shows the values of~$u$. The right picture shows the
component~$A_{1,2}$ of the skew-symmetric~$A$. The vertical and
horizontal patterns indicate that the function is almost linear along
those lines.
\begin{figure}[!ht]
  \centering
  
  \begin{tikzpicture}[scale=3]
    %% draw outer box
    \draw[dashed] (-1,-1) -- (-1,+1) -- (+1,+1) -- (+1,-1) --cycle;

    \clip (-1,-1) rectangle (1,1);

    \fullpattern{%
      \fill[pattern=vertical lines] (-1/6,0) -- (0,1/12) -- (1/6,0) --
      (0,1/24) -- (-1/6,0) -- (0,-1/12) -- (1/6,0) -- (0,-1/24) --
      (-1/6,0);
      % \fill[opacity=0.05, red] (-1/6,0) -- (0,1/12) -- (1/6,0) --
      % (0,1/24) -- (-1/6,0) -- (0,-1/12) -- (1/6,0) -- (0,-1/24) --
      % (-1/6,0);
      \draw (-1/6,0) -- (0,1/12) -- (1/6,0) --
      (0,1/24) -- (-1/6,0) -- (0,-1/12) -- (1/6,0) -- (0,-1/24) --
      (-1/6,0);
    }

    %% write values
    \node at (0,1/2) {$\frac 1 2$};
    \node at (0,-1/2) {$-\frac {1}{ 2}$};
    
    \node at (+5/6,0) {$0$};
    \node at (0,0) {$\scriptstyle 0$};
    \node at (-5/6,0) {$0$};
  \end{tikzpicture}
  \quad
  \begin{tikzpicture}[scale=3]
    \clip (-1,-1) rectangle (1,1);

    \filldraw[pattern=horizontal lines](-1,-1)--(-1,1)--(1,1)--(1,-1)--cycle;

    \fullpattern{%
      \filldraw[white] (-1/6,0) -- (0,1/3) -- (1/6,0) -- (0,-1/3) -- (-1/6,0); 
      \draw (-1/6,0) -- (0,1/3) -- (1/6,0) -- (0,-1/3) -- (-1/6,0); 

    }

    %% write values
    \node at (-3/4,0) {$-\frac 12$};
    \node at (3/4,0) {$\frac 12$};
    \node at (0,0) {$\scriptstyle 0$};
    \node at (-1/3,0) {\scalebox{0.6}{$\text{-}\frac 14$}};
    \node at (+1/3,0) {\scalebox{0.6}{$\frac 14$}};

    %% draw outer box
    \draw[dashed] (-1,-1) -- (-1,+1) -- (+1,+1) -- (+1,-1) -- cycle;
  \end{tikzpicture}
  \caption{Sub-dimensional case: functions~$u$ and the
    component~$A_{1,2}$ for $d=2$, $p_0=2-\frD$ and
    $\frS = \frC^1_{\frac 13} \times \set{0}$. The lines indicate
    smooth almost linear extensions.}
  \label{fig:2Dsub}
\end{figure}
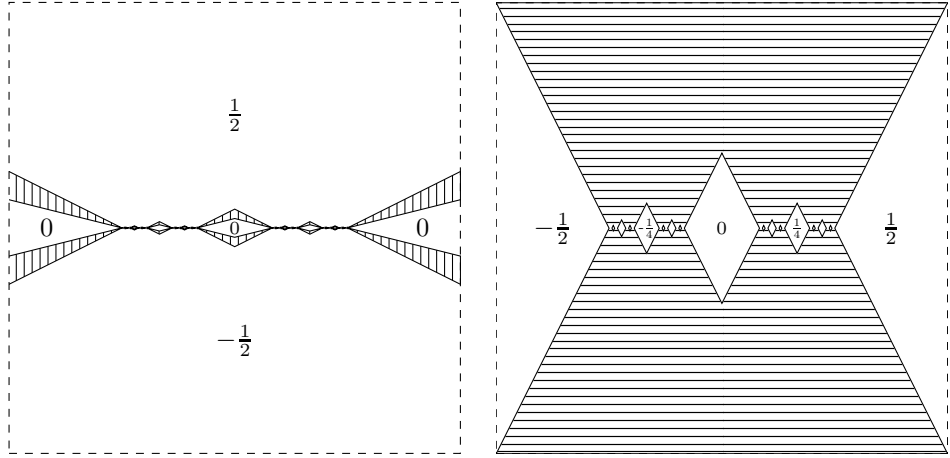
\begin{figure}[!ht]
  \centering
  
  \begin{tikzpicture}[scale=3]
    %% draw outer box
    \draw[dashed] (-1,-1) -- (-1,+1) -- (+1,+1) -- (+1,-1) -- cycle;

    \clip (-1,-1) rectangle (1,1);
    
    \fill[pattern=vertical lines] (-1,-1) rectangle
    (1,1);
    
    \vfullpattern{%
      \filldraw[white] (0,-1/6) -- (1/3,0) -- (0,1/6) -- (-1/3,0) -- (0,-1/6); 
      \draw  (0,-1/6) -- (1/3,0) -- (0,1/6) -- (-1/3,0) -- (0,-1/6); 
    }

    %% write values
    \node at (0,-3/4) {$-1$};
    \node at (0,3/4) {$1$};
    
    \node at (0,0) {$\scriptstyle 0$};
    
    \node at (0,-1/3) {\scalebox{0.6}{$\text{-}\frac 12$}};
    \node at (0,+1/3) {\scalebox{0.6}{$\frac 12$}};

    \draw[dashed] (-1,-1) -- (-1,+1) -- (+1,+1) -- (+1,-1) -- cycle;
  \end{tikzpicture}
  \quad
  \begin{tikzpicture}[scale=3]
    %% draw outer box
    \draw[dashed] (-1,-1) -- (-1,+1) -- (+1,+1) -- (+1,-1) --cycle;

    \clip (-1,-1) rectangle (1,1);
    
    \vfullpattern{%
      \fill[pattern=horizontal lines] (0,-1/6) -- (1/12,0) -- (0,1/6) --
      (1/24,0) -- (0,-1/6) -- (-1/12,0) -- (0,1/6) -- (-1/24,0) --
      (0,-1/6);
      \draw  (0,-1/6) -- (1/12,0) -- (0,1/6) --
      (1/24,0) -- (0,-1/6) -- (-1/12,0) -- (0,1/6) -- (-1/24,0) --
      (0,-1/6);
    }
    
    %% write values
    \node at (1/2,0) {$1$};
    \node at (-1/2,0) {$-1$};
    
    \node at (0,+5/6) {$0$};
    \node at (0,0) {$\scriptstyle 0$};
    \node at (0,-5/6) {$0$};
  \end{tikzpicture}

  \caption{Super-dimensional case: functions~$u$ and~$A_{1,2}$ for $d=2$,
    $p_0=\frac{d-\frD}{1-\frD}$ and $\frS = \set{0} \times \frC^1_{\frac 13} \times$. The
    lines indicate smooth almost linear extensions.}
  \label{fig:2Dsup}
\end{figure}

\begin{remark}
  \label{rem:avoid-diff-forms}
  The use of the skew-symmetric~$A$ allows us to avoid the language
  of differential forms:
  \begin{enumerate}
  \item  \label{itm:avoid-diff-forms1}
    For~$d=2$ we can rewrite~$A$ as
    \begin{align*}
      A &=
          \begin{pmatrix}
            0 & -v 
            \\
            v & 0
          \end{pmatrix}
    \end{align*}
    with $v\,:\, \setR^2 \to \setR$.  Then
    $\divergence A= (-\partial_2 v, \partial_1 v)^T = \nabla^\perp
    v$. Thus, $\divergence \divergence A = \divergence b =0$ becomes the well known
    $\divergence \nabla^\perp v = 0$, compare~\eqref{eq:divbd2} and
    Proposition~\ref{pro:b-divfree}.
  \item   \label{itm:avoid-diff-forms2}
    If~$d=3$ we can rewrite~$A$ as
    \begin{align*}
      A &:=
          \begin{pmatrix}
            0 & v_3 & -v_2
            \\
            -v_3 & 0 & v_1
            \\
            v_2 & -v_1 & 0
          \end{pmatrix}
    \end{align*}
    with $v\,:\, \setR^3 \to \setR^3$. Then $\divergence A= \curl
    v$. Thus, $\divergence \divergence A = \divergence b =0$ becomes
    the well known $\divergence \curl v = 0$. Hence, for $d=3$ we
    could also work with $v$ and $\curl v$ instead of $A$ and
    $\divergence A$. Compare also~\eqref{eq:divbd2} and
    Proposition~\ref{pro:b-divfree}.
  \end{enumerate}
\end{remark}
\begin{remark}
  \label{rem:rankine}
  If we use
  Remark~\ref{rem:avoid-diff-forms}~\ref{itm:avoid-diff-forms2} to
  find~$v$ with $A_3 = \curl v$. Then
  $(x_1,x_2) \mapsto v(x_1,x_2,x_3)$ is just a smooth version of the
  so called \emph{Rankine vortex}. It has a central core of
  radius~$\frac 12 \abs{x_3}$, where the velocity increases linearly,
  surrounded by a free vortex, where the velocity drops off from the
  center like~$\frac 1r$ with $r=\abs{(x_1,x_2)}$.
  % \begin{tikzpicture}
  %  \begin{axis}[
  %      axis y line = middle,
  %      axis x line = middle,
  %      samples     = 200,
  %      domain      = 0.1:10,
  %      xmin = -10, xmax = 10,
  %      ymin = -10, ymax = 10,
  %      unbounded coords=jump
  %    ]
  %    \addplot[black, thick, mark=none] {1/(x )};
  %  \end{axis}  
% \begin{axis}[ticks=none,
%     axis lines = middle,
%     axis line style={->},
%     ymin=0,
%     xmin=-2, xmax=2,
%     axis equal image,
% ]
% density of Normal distribution:
% \addplot [
% red,
% domain=-100:100,
% samples=100,
% ]
% {1/x};
% \draw (axis cs:0,3) circle [blue, radius=1];
%  \draw[->] (0.5*2^0.5,0)--(0.5*2^0.5, 0.5*2^0.5);
% \draw[->] (1,0)--(1, 1);
% \draw[->] (-0.5*2^0.5,0)--(-0.5*2^0.5, -0.5*2^0.5);
% \draw[->] (-1,+0)--(-1, -1);
% \draw (0,0) circle (10);
% \draw(-1,-1)--(1,1);
% \draw[->, dashed] (0,-2)--(0,2);
% \draw[->, dashed] (-2,0)--(2,0);

% \end{axis}
%\end{tikzpicture}

\end{remark}

\subsection{Properties of the Fractal Examples}
\label{ssec:prop-fract-exampl}

Let us derive a few useful properties of~$u$, $A$ and $b$.
\begin{proposition}
  \label{pro:prop-uAb}
  For~$1<p_0<\infty$ let $u,A,b$ be as is
  Definition~\ref{def:fractal-examples}. Then
  \begin{enumerate}
  \item \label{itm:prop-f1} $u \in L^\infty(\Omega) \cap W^{1,1}(\Omega) \cap C^\infty(\overline{\Omega}
  \setminus \frS)$,
  \item \label{itm:prop-f2} $A \in W^{1,1}(\Omega) \cap C^\infty(\overline{\Omega}
  \setminus \frS)$,
  \item \label{itm:prop-f3} $b \in L^1(\Omega) \cap
    C^\infty(\overline{\Omega} \setminus \frS)$. 
  % \item \label{itm:prop-f4} $\divergence b = 0$ on
  %   $\Omega \setminus \frS$ and $\divergence b=0$ in the sense of
  %   distributions $(C^\infty_0(\Omega))'$.
  \end{enumerate}
\end{proposition}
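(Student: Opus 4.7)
The plan is to handle the three cases of Definition~\ref{def:fractal-examples} separately, reducing each to pointwise estimates combined with the Cantor-measure lemmas already proved. Case (i), $p_0 = d$, is immediate from Proposition~\ref{pro:prop-uAb-d}, since by definition $u = u_d$, $A = A_d$, $b = b_d$ and $\frS = \set{0}$.

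For the sub-dimensional case $1 < p_0 < d$, first consider $u = \sgn(x_d)\,\rho$. Boundedness is clear. Smoothness on $\overline\Omega \setminus \frS$ holds because at any $x \notin \frS$ either $x_d \neq 0$ (so $\sgn(x_d)$ is locally constant and $u = \pm \rho$ is smooth by Lemma~\ref{lem:smooth-indicator}), or $x_d = 0$ with $d(\bar x, \frC^{d-1}_\lambda) > 0$, in which case the lower support bound on $\rho$ forces $\rho \equiv 0$ in a neighborhood of $x$. The gradient bound on $\rho$ gives $\abs{\nabla u} \lesssim \abs{x_d}^{-1} \indicator_{\set{d(\bar x, \frC^{d-1}_\lambda) \leq 4 \abs{x_d}}}$, and Lemma~\ref{lem:cantor-weak}~\ref{itm:cantor-weak2} with $\beta = 1$ places this in $L^{p_0, \infty}(\Omega) \hookrightarrow L^1(\Omega)$, so $u \in W^{1,1}(\Omega)$. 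For $A = (\mu^{d-1}_\lambda \times \delta_0) * A_d$ and $b = \divergence A = (\mu^{d-1}_\lambda \times \delta_0) * b_d$, Fubini combined with the local integrability of $A_d$, $\nabla A_d$, $b_d$ from Proposition~\ref{pro:prop-uAb-d} and the finiteness of the convolving measure gives $A, \nabla A, b \in L^1(\Omega)$. Smoothness on $\overline\Omega \setminus \frS$ is obtained by differentiation under the integral: for $x \notin \frS$ the translates $(\bar x - \bar y, x_d)$ with $\bar y \in \frC^{d-1}_\lambda$ stay uniformly away from the origin, where $A_d$ is smooth.

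For the super-dimensional case $p_0 > d$, $A$ is defined directly, so smoothness on $\overline\Omega \setminus \frS$ follows from the smoothness of $\abs{\bar x}^{1-d}$ off $\set{\bar x = 0}$ together with the lower support bound on $\rho$, which makes $A \equiv 0$ near every $(0, x_d)$ with $x_d \notin \frC_\lambda$. The pointwise estimates $\abs{A} \lesssim \abs{\bar x}^{2-d} \indicator_{\set{d(x_d, \frC_\lambda) \leq 4 \abs{\bar x}}}$ and $\abs{\nabla A}, \abs{b} \lesssim \abs{\bar x}^{1-d} \indicator_{\set{d(x_d, \frC_\lambda) \leq 4 \abs{\bar x}}}$ (combining the derivatives of the rational factor with $\abs{\nabla \rho} \lesssim \abs{\bar x}^{-1}$) together with Lemma~\ref{lem:cantor-weak}~\ref{itm:cantor-weak2}, applied with $m = 1$ and the Cantor direction identified with $x_d$, give membership in weak Lebesgue spaces of exponent strictly above $1$, hence in $L^1(\Omega)$. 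The convolution $u = (\delta_0^{d-1} \times \mu_\lambda) * u_d$ is handled as in the sub-dimensional case.

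No single step is deep; the main obstacle is bookkeeping — aligning the support of the mollified cutoff $\rho$ with the support of the singular factor so that the combined singular set is exactly $\frS$, and checking that the weak-Lebesgue exponents from Lemma~\ref{lem:cantor-weak} exceed $1$, which is ensured by $0 < \frD < 1$ in the super-dimensional case and $0 < \frD < d-1$ in the sub-dimensional case.
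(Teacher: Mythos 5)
Your proof is correct and follows essentially the same route as the paper: the case $p_0=d$ is delegated to Proposition~\ref{pro:prop-uAb-d}, and in the sub- and super-dimensional cases smoothness off $\frS$ comes from the convolution/cut-off structure while integrability comes from the local integrability of the building blocks — you merely spell out the Fubini, differentiation-under-the-integral and weak-Lebesgue (Lemma~\ref{lem:cantor-weak}) details that the paper leaves implicit. One cosmetic remark: where you invoke the ``lower support bound'' on $\rho$ to conclude $\rho\equiv 0$ near points off $\frS$, it is of course the upper bound $\rho\le\indicator_{\set{d(\bar{x},\frC^{d-1}_\lambda)\le 4\abs{x_d}}}$ (resp.\ its super-dimensional analogue) that forces the vanishing.
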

\begin{proof}
  The case~$p_0=d$ follows from Proposition~\ref{pro:prop-uAb-d}.  We
  continue with the sub-dimensional case $1<p_0 < d$. It is easy to
  see that
  $u \in C^\infty(\overline{\Omega} \setminus \frS) \cap
  L^\infty(\Omega)$.  Since $A_d \in C^\infty(\Rd \setminus \set{0})$
  and $\support (\mu_\lambda^{d-1} \times \delta_0) = \frS$, it
  follows from the definition by convolution that
  $A \in C^\infty(\overline{\Omega} \setminus \frS)$, so also
  $b \in C^\infty(\overline{\Omega} \setminus \frS)$. It also follows
  that $A \in W^{1,1}(\Omega)$ and $b \in L^1(\Omega)$. The
  case~$p_0>d$ is similar.
\end{proof}

\begin{proposition}
  \label{pro:est-uAb}
  For~$1<p_0<\infty$ let $u,A,b$ be as is
  Definition~\ref{def:fractal-examples}.
  \begin{enumerate}
  \item \label{itm:est-uAb1}
  \noindent%
  If $p_0=d$, then
  \begin{align*}
    \abs{\nabla u} 
    &\lesssim           \abs{\bar{x}}^{-1} \indicator_{ 
      \set{ 2 \abs{x_d} \leq \abs{\bar{x}} \leq 4
      \abs{x_d}}} \eqsim \abs{x_d}^{-1} \indicator_{ 
      \set{ 2 \abs{x_d} \leq \abs{\bar{x}} \leq 4
      \abs{x_d}}}
      ,
    \\
    \abs{b} 
    &\lesssim           \abs{x_d}^{1-d} \indicator_{ 
      \set{ 2 \abs{\bar{x}} \leq \abs{x_d} \leq 4
      \abs{\bar{x}}}} \eqsim \abs{\bar{x}}^{1-d} \indicator_{ 
      \set{ 2 \abs{\bar{x}} \leq \abs{x_d} \leq 4
      \abs{\bar{x}}}}
      .
    \\
    \intertext{If $1 < p_0 < d$, then}
    \abs{\nabla u} &\lesssim \abs{x_d}^{-1} \indicator_{
                     \set{
                     2 \abs{x_d} \leq 
                     d(\bar{x},\frC_\lambda^{d-1}) \leq 4 \abs{x_d}
                     }
                     }
                     ,
    \\
    \abs{b} 
                 &\lesssim \abs{x_d}^{\frD+1-d} \indicator_{ \set{ 
      d(\bar{x},\frC^{d-1}_\lambda) \leq \frac 12 \abs{x_d}}}
      .
    \\
    \intertext{If $p_0>d$, then}
    \abs{\nabla u} &\lesssim \abs{\bar{x}}^{\frD-1} \indicator_{ \set{ 
                     d(x_d,\frC_\lambda) \leq \frac 12 \abs{\bar{x}}}}
                     , 
    \\
    \abs{b}  &\lesssim \abs{ \bar{x}}^{1-d} \indicator_{
                             \set{ 2 \abs{\bar{x}} \leq d(x_d,\frC_\lambda) \leq 4
                             \abs{\bar{x}}}}. 
  \end{align*}
  \item \label{itm:est-uAb2} $\abs{\nabla u}\cdot \abs{b} = 0$ a.e. in~$\Omega$.
  \end{enumerate}
\end{proposition}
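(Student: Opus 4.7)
The plan is to handle the three cases of Definition~\ref{def:fractal-examples} separately and to deduce~\ref{itm:est-uAb2} at the end from the disjointness of the supports appearing in~\ref{itm:est-uAb1}: in every case the $\nabla u$-support lies in the ``annular'' transition region between the two smooth cutoff levels of $\rho$ (or of $\theta$), while the $b$-support lies in the corresponding ``core''. The matching case $p_0=d$ is just Proposition~\ref{pro:prop-uAb-d}; the two equivalent forms of each estimate come from the fact that $\abs{\bar x}\eqsim\abs{x_d}$ on the relevant indicator set.

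For the sub-dimensional case $1<p_0<d$, the bound on $\abs{\nabla u}$ follows directly from $u=\sgn(x_d)\,\rho$: outside $\set{x_d=0}$ the sign is locally constant, so $\nabla u=\sgn(x_d)\,\nabla\rho$ pointwise, and property~(ii) of the construction of~$\rho$ gives the announced estimate. (The distributional contribution from the jump of $\sgn$ across $\set{x_d=0}$ vanishes because $\rho=0$ there outside the null set $\frC_\lambda^{d-1}\times\set{0}$.) For $b$ I would use that convolution commutes with divergence to write $b=(\mu_\lambda^{d-1}\times\delta_0)*b_d$, insert the bound $\abs{b_d}\eqsim\abs{x_d}^{1-d}\indicator_{\set{2\abs{\bar x}\le\abs{x_d}\le 4\abs{\bar x}}}$ from Proposition~\ref{pro:prop-uAb-d}, and obtain
\[
\abs{b(x)}\lesssim\abs{x_d}^{1-d}\,\mu_\lambda^{d-1}\bigl(\set{\bar y:\tfrac14\abs{x_d}\le\abs{\bar x-\bar y}\le\tfrac12\abs{x_d}}\bigr).
\]
Bounding this annulus by the ball $B_{\abs{x_d}/2}^{d-1}(\bar x)$ and applying Lemma~\ref{lem:cantor-estimates}~\ref{itm:cantor-estimates1} then produces the extra factor $\abs{x_d}^{\frD}\indicator_{\set{d(\bar x,\frC_\lambda^{d-1})\le\abs{x_d}/2}}$ and yields the claim.

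For the super-dimensional case $p_0>d$ the estimate for $\abs{\nabla u}$ is entirely analogous: write $\nabla u=(\delta_0^{d-1}\times\mu_\lambda)*\nabla u_d$, note that on the support of the integrand one has $\abs{x_d-y}\eqsim\abs{\bar x}$, and again estimate the resulting Cantor mass of a one-dimensional ball via Lemma~\ref{lem:cantor-estimates}~\ref{itm:cantor-estimates1}. The estimate for $b$ requires an actual divergence computation. Writing $A=\rho\tilde A$ with
\[
\tilde A(x)=\frac{1}{\sigma_{d-1}}\abs{\bar x}^{1-d}\begin{pmatrix} 0 & -\bar x\\ \bar x^T & 0\end{pmatrix},
\]
the product rule gives $(b)_i=(\partial_j\rho)\tilde A_{ij}+\rho\,\partial_j\tilde A_{ij}$. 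The key step is to check that $\divergence\tilde A=0$ on $\setR^d\setminus\set{\bar x=0}$: for $i<d$ the only nonzero entry of the $i$-th row sits in position $j=d$ and is independent of $x_d$, while for $i=d$ the row is proportional to the classical divergence-free Newtonian field $\abs{\bar x}^{1-d}\bar x$ on $\setR^{d-1}$. Since $\rho$ vanishes in a neighborhood of $\set{\bar x=0}\setminus\frS$, one deduces $b=\tilde A^{T}\nabla\rho$ pointwise on $\Omega\setminus\frS$, and combining $\abs{\tilde A}\eqsim\abs{\bar x}^{2-d}$ with property~(ii) of $\rho$ gives the claim. The main potential obstacle is this rowwise verification of $\divergence\tilde A=0$; once it is in place, everything else reduces to routine convolution estimates against the Cantor measure combined with Lemma~\ref{lem:cantor-estimates}.
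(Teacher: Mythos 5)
Your proposal is correct and follows essentially the same route as the paper: case distinction according to $p_0$, the bound on the cutoff gradient from Lemma~\ref{lem:smooth-indicator}, and convolution against the Cantor measure estimated via Lemma~\ref{lem:cantor-estimates} (your use of part~\ref{itm:cantor-estimates1} with the annulus enclosed in a ball is exactly how the paper's part~\ref{itm:cantor-estimates3} is proved), with part~\ref{itm:est-uAb2} deduced from the disjointness of the supports. Your explicit verification that $\divergence \tilde A=0$ away from $\set{\bar{x}=0}$ in the super-dimensional case, and the observation that $\abs{x_d-y}\eqsim\abs{\bar{x}}$ on the support before convolving, only spell out steps the paper leaves implicit.
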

\begin{proof}
  We begin with~\ref{itm:est-uAb1}. The case~$p_0=d$ follows directly
  from Proposition~\ref{pro:prop-uAb-d}. We continue with the sub-dimensional
  case~$1<p_0 < d$.  It follows from the properties of~$\rho$ that
  \begin{align*}
    \abs{\nabla u} 
    &\lesssim \indicator_{ 
      \set{ 2 \abs{x_d} \leq
      d(\bar{x},\frC^{d-1}_\lambda) \leq 4
      \abs{x_d}}}
      \abs{x_d}^{-1}.
  \end{align*}
  If follows from
  \begin{align*}
    \abs{b}&= (\mu^{d-1}_\lambda \times \delta_0) *
                    b_d,
  \end{align*}
  Proposition~\ref{pro:prop-uAb-d} and Lemma~\ref{lem:cantor-estimates} that
  \begin{align*}
    \abs{b}&\leq
                    (\mu^{d-1}_\lambda \times \delta_0) *
                    \abs{b_d}
    \\
                  &\lesssim (\mu^{d-1}_\lambda \times \delta_0) * \big(
                    \indicator_{\set{\frac 14 \abs{x_d}
                    \leq \abs{\bar{x}} \leq \frac 12 \abs{x_d}}}\, \abs{x_d}^{1-d} \big)
    \\
                  &= \big((\mu^{d-1}_\lambda \times \delta_0) *
                    \indicator_{\set{\frac 14 \abs{x_d}
                    \leq \abs{\bar{x}} \leq \frac 12 \abs{x_d}}}\big)\,
                    \abs{x_d}^{1-d}
    \\
                  &\leq
                    \indicator_{ \set{ 
                    d(\bar{x},\frC^{d-1}_\lambda) \leq \frac 12 \abs{x_d}}}(x)
                    \,\abs{x_d}^{\frD+1-d}.
  \end{align*}
  This proves the sub-dimensional case.

  If remains to prove the super-dimensional case~$p_0 > d$.  It
  follows from the properties of~$\rho$ that
  \begin{align*}
     \abs{b} 
     &\lesssim \indicator_{ 
       \set{
       2 \abs{\bar{x}} \leq
       d(\bar{x_d},\frC_\lambda) \leq 4
       \abs{\bar{x}}
       }
       }
       \abs{\bar{x}}^{1-d}.
  \end{align*}
  If follows from
  \begin{align*}
    \nabla u&= (\delta_0^{d-1} \times \mu_\lambda)  *
                    \nabla u_d,
  \end{align*}
  Proposition~\ref{pro:prop-uAb-d} and Lemma~\ref{lem:cantor-estimates} that
  \begin{align*}
    \abs{\nabla u}&\leq
                    (\delta_0^{d-1} \times \mu_\lambda) *
                    \abs{\nabla u_d}
    \\
                  &\lesssim (\delta_0^{d-1} \times \mu_\lambda) * \big(
                    \indicator_{\set{\frac 14 \abs{\bar{x}}
                    \leq \abs{x_d} \leq \frac 12 \abs{\bar{x}}}}\, \abs{x_d}^{-1} \big)
    \\
                  &= \big((\delta_0^{d-1} \times \mu_\lambda) *
                    \indicator_{\set{\frac 14 \abs{\bar{x}}
                    \leq \abs{x_d} \leq \frac 12 \abs{\bar{x}}}}\big)\,
                    \abs{x_d}^{-1}
    \\
                  &\leq
                    \indicator_{ \set{ 
                    d(x_d,\frC_\lambda) \leq \frac 12 \abs{x_d}}}(x)
                    \,\abs{x_d}^{\frD-1}.
  \end{align*}
  This proves the super-dimensional case and concludes~\ref{itm:est-uAb1}. 

  The estimates in~\ref{itm:est-uAb1} immediately imply that the
  support of~$\nabla u$ and $b$ only overlaps at~$\frS$, which
  is a null set. This proves~\ref{itm:est-uAb2}.
\end{proof}
The following corollary clarifies the role of~$p_0$ in
Definition~\ref{def:fractal-examples}.
\begin{corollary}
  \label{cor:ub-p0}
  For~$1<p_0<\infty$ let $u,b$ be as in
  Definition~\ref{def:fractal-examples}. Then
  $\nabla u \in L^{p_0,\infty}(\Omega)$,
  $b \in L^{p_0',\infty}(\Omega)$.
\end{corollary}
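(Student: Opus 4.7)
The plan is to combine the pointwise estimates for $|\nabla u|$ and $|b|$ provided by Proposition~\ref{pro:est-uAb} with the weak-Lebesgue embeddings in Lemma~\ref{lem:cantor-weak}, checking all three cases of Definition~\ref{def:fractal-examples} separately. In each case the estimates are of the form $|\hat{x}|^{-\beta}$ times an indicator on a set comparable to one of the sets appearing in Lemma~\ref{lem:cantor-weak}, so the proof reduces to a computation of the exponent $\beta$ and a verification that the resulting exponent $d/\beta$ or $(d-\frD)/\beta$ matches $p_0$ or $p_0'$.

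First I would handle the case $p_0=d$. Here $\frS=\set{0}$ and $\frD=0$. For $\nabla u$ the estimate gives $|\nabla u|\lesssim |x_d|^{-1}\indicator_{\set{|\bar x|\le 4|x_d|}}$, so splitting $\setR^d=\setR^{d-1}\times\setR$ (with $m=d-1$, $\hat{x}=x_d$, $\beta=1$) and applying Lemma~\ref{lem:cantor-weak}~\ref{itm:cantor-weak1} yields $\nabla u\in L^{d,\infty}=L^{p_0,\infty}$. For $b$ the rôles of $\bar x$ and $x_d$ are swapped: on the support of $b$ we have $|\bar x|\eqsim|x_d|$, so $|b|\lesssim |\bar x|^{1-d}\indicator_{\set{|x_d|\le 4|\bar x|}}$, and applying Lemma~\ref{lem:cantor-weak}~\ref{itm:cantor-weak1} with $m=1$, $\hat{x}=\bar x$, $\beta=d-1$ gives $b\in L^{d/(d-1),\infty}=L^{p_0',\infty}$.

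Next I would treat the sub-dimensional case $1<p_0<d$, where $\frS=\frC_\lambda^{d-1}\times\set{0}$ and $p_0=d-\frD$. The estimate $|\nabla u|\lesssim |x_d|^{-1}\indicator_{\set{d(\bar x,\frC_\lambda^{d-1})\le 4|x_d|}}$ is exactly of the form in Lemma~\ref{lem:cantor-weak}~\ref{itm:cantor-weak2} with $m=d-1$, $\beta=1$, and since $1<p_0=d-\frD$ the hypothesis $\beta\le d-\frD$ is satisfied; this yields $\nabla u\in L^{(d-\frD)/1,\infty}=L^{p_0,\infty}$. For $b$ the estimate is $|b|\lesssim |x_d|^{\frD+1-d}\indicator_{\set{d(\bar x,\frC_\lambda^{d-1})\le \frac12 |x_d|}}$; apply the same lemma with $\beta=d-1-\frD$ (which satisfies $0<\beta<d-\frD$) to obtain $b\in L^{(d-\frD)/(d-1-\frD),\infty}=L^{p_0',\infty}$, using the elementary identity $p_0'=(d-\frD)/(d-1-\frD)$ that follows from $p_0=d-\frD$.

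Finally, the super-dimensional case $p_0>d$ is symmetric: now $\frS=\set{0}^{d-1}\times\frC_\lambda$ lives in the $x_d$-coordinate, so I swap variables and view $\setR^d=\setR^1\times\setR^{d-1}$ with the Cantor set in the first factor and $\hat{x}=\bar x$. For $\nabla u$, Proposition~\ref{pro:est-uAb} gives $|\nabla u|\lesssim |\bar x|^{\frD-1}\indicator_{\set{d(x_d,\frC_\lambda)\le \frac12 |\bar x|\,}}$, so Lemma~\ref{lem:cantor-weak}~\ref{itm:cantor-weak2} with $m=1$ and $\beta=1-\frD$ produces $L^{(d-\frD)/(1-\frD),\infty}=L^{p_0,\infty}$. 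For $b$, the estimate $|b|\lesssim|\bar x|^{1-d}\indicator_{\set{d(x_d,\frC_\lambda)\le 4|\bar x|}}$ combined with $m=1$ and $\beta=d-1$ gives $L^{(d-\frD)/(d-1),\infty}$, and a short computation shows that $p_0'=(d-\frD)/(d-1)$ whenever $p_0=(d-\frD)/(1-\frD)$, which concludes the proof. No step is truly hard; the only mild obstacle is book-keeping — making sure one applies Lemma~\ref{lem:cantor-weak} to the right splitting of coordinates and with the right value of $\beta$, and then checking that $d/\beta$ or $(d-\frD)/\beta$ equals the claimed dual exponent.
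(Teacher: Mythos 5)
Your proposal is correct and is essentially the paper's own argument: the paper also proves the corollary by combining the pointwise bounds of Proposition~\ref{pro:est-uAb} with Lemma~\ref{lem:cantor-weak} (applied with $m=d-1$ and $m=1$), and your case-by-case bookkeeping of $\beta$ and the identities $p_0'=(d-\frD)/(d-\frD-1)$, resp.\ $p_0'=(d-\frD)/(d-1)$, just spells out the details the paper leaves implicit.
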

\begin{proof}
  The proof is an immediate consequence of Lemma~\ref{lem:cantor-weak}
  in combination with
  Proposition~\ref{pro:est-uAb}~\ref{itm:est-uAb1}. For this recall
  that we have $p_0=d$ for matching the dimension, $p_0=d-\frD$ in the
  sub-dimensional case and $p_0 = \frac{d-\frD}{1-\frD}$ in the
  super-dimensional case. We apply Lemma~\ref{lem:cantor-weak} for
  $m=d-1$ and for~$m=1$ to cover all cases.
\end{proof}
\begin{remark}
  \label{rem:ub-p0}
  The integrability exponents of~$\nabla u$ and~$b$ are sharp. In
  particular, $\nabla u \notin L^{p_0}(\Omega)$ and
  $b \notin L^{p_0'}(\Omega)$. This can be shown with the help of
  Remark~\ref{rem:cantor-weak}.
\end{remark}

We also need localized version of~$u$, $A$ and $b$.
\begin{definition}
  \label{def:localuAb}
  For~$1<p_0<\infty$ let $u,A,b$ be as is
  Definition~\ref{def:fractal-examples}.  Let
  $\eta \in C^\infty_0(\Omega)$ with
  $\indicator_{(-\frac46,\frac46)^d} \leq \eta \leq
  \indicator_{(-\frac56,\frac56)^d}$ and $\norm{\nabla \eta}_\infty \leq
  c$. Then we define
  \begin{align*}
    u^\circ &= \eta u, &  u^\partial &= (1-\eta) u,
    \\
    A^\circ &= \eta A, &  A^\partial &= (1-\eta) A,
    \\
    b^\circ &= \divergence(A^\circ) = \divergence(\eta A),
                       &
                         b^\partial
                                     &= \divergence(A^\partial) = \divergence\big((1-\eta) A\big).
  \end{align*}
\end{definition}

The following proposition shows that $b$ and $b^\circ$ are divergence
free in the sense of distributions.

\begin{proposition}
  \label{pro:b-divfree}
  For all $w \in C^\infty(\Omega)$ we have
  \begin{align*}
    \int_\Omega  b^\circ \cdot \nabla w  \,dx = 0.
  \end{align*}
  For all $w \in C^\infty_0(\Omega)$ we have
  \begin{align*}
    \int_\Omega  b \cdot \nabla w  \,dx = 0.
  \end{align*}
  Moreover, $\divergence b = \divergence b^\circ = 0$ in the
  distributional sense and on~$\Omega \setminus \frS$ in the classical
  sense.
\end{proposition}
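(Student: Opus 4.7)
The plan is to exploit the skew-symmetry of $A$ (which holds in all three cases of Definition~\ref{def:fractal-examples}) combined with the regularity $A \in W^{1,1}(\Omega)$ from Proposition~\ref{pro:prop-uAb}. The guiding principle, already used informally around~\eqref{eq:divbd}, is that if $M \in W^{1,1}(\Omega;\setR^{d\times d})$ is skew-symmetric and we can integrate by parts twice without picking up boundary terms, then $\int \divergence(M)\cdot \nabla w\,dx = -\int M_{ij}\,\partial_i\partial_j w\,dx = 0$, because $M_{ij}$ is antisymmetric in $(i,j)$ while $\partial_i\partial_j w$ is symmetric.

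For the identity $\int_\Omega b\cdot \nabla w\,dx = 0$ with $w \in C^\infty_0(\Omega)$, first I write $\int_\Omega (\divergence A)_i\,\partial_i w\,dx = \int_\Omega \partial_j A_{ij}\,\partial_i w\,dx$. Since $A \in W^{1,1}(\Omega)$ and each $\partial_i w$ is smooth and compactly supported in $\Omega$, the standard Sobolev–smooth integration by parts gives $-\int_\Omega A_{ij}\,\partial_j\partial_i w\,dx$ with no boundary contribution, and skew-symmetry finishes the argument.

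For the identity $\int_\Omega b^\circ \cdot \nabla w\,dx = 0$ with $w \in C^\infty(\Omega)$, the key observation is that $\eta A$ is again skew-symmetric (since $\eta$ is a scalar) and, because $\eta \in C^\infty_0(\Omega)$, the matrix $\eta A$ now has \emph{compact} support in $\Omega$. Therefore the same two-fold integration by parts applied to $\int_\Omega \partial_j(\eta A_{ij})\,\partial_i w\,dx$ produces no boundary term even though $w$ itself need not vanish on~$\partial\Omega$, and one concludes as before via skew-symmetry.

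The distributional divergence freeness is then immediate: testing against $\psi \in C^\infty_0(\Omega) \subset C^\infty(\Omega)$ in the two identities just proved yields $\langle \divergence b,\psi\rangle = -\int b\cdot\nabla\psi\,dx = 0$ and likewise $\langle \divergence b^\circ,\psi\rangle = 0$. Finally, by Proposition~\ref{pro:prop-uAb} we have $b, b^\circ \in C^\infty(\overline{\Omega}\setminus \frS)$, so on $\Omega\setminus \frS$ the distributional divergence coincides with the pointwise one, which must therefore also vanish. The only point requiring any care is the validity of the integration by parts at the $W^{1,1}$ level; this is routine since the partner function is always $C^\infty$ and either it is compactly supported in $\Omega$ (first statement) or the $W^{1,1}$ object $\eta A$ is (second statement), so no boundary flux appears in either case.
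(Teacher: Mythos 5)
Your proof is correct and follows essentially the same route as the paper's own argument: one integration by parts (valid at the $W^{1,1}$ level, with no boundary terms because either $w$ or $\eta A$ has compact support in $\Omega$), then the cancellation of the skew-symmetric matrix $A$ (resp.\ $\eta A$) against the symmetric Hessian $\nabla^2 w$, and finally the distributional and classical divergence-free statements via testing with $C^\infty_0(\Omega)$ and the smoothness of $b$, $b^\circ$ away from $\frS$ from Proposition~\ref{pro:prop-uAb}.
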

\begin{proof}
  For $w \in C^\infty(\overline{\Omega})$ we get by partial integration
  \begin{align*}
    \int_\Omega  b^\circ \cdot \nabla w  \,dx
    &=
      \int_\Omega   \divergence\big(\eta A\big) \cdot \nabla w\,dx =
      \int_\Omega  (\eta A) \cdot \nabla^2 w \,dx = 0,
  \end{align*}
  since~$A$ is anti-symmetric.  For
  $w \in C^\infty_0(\overline{\Omega})$ we also get by partial
  integration
  \begin{align*}
    \int_\Omega  b \cdot \nabla w  \,dx
    &=
      \int_\Omega   \divergence A \cdot \nabla w\,dx =
      \int_\Omega  A: \nabla^2 w \,dx = 0,
  \end{align*}
  since~$A$ is anti-symmetric. It follows that
  $\divergence b = \divergence b^\circ=0$ in the distributional
  sense. Since by Proposition~\ref{pro:prop-uAb} we have
  $b \in C^\infty(\overline{\Omega} \setminus \frS)$, it follows that
  $\divergence b= \divergence b^\circ = 0$ on $\Omega \setminus \frS$
  in the classical sense.
\end{proof}
Due to Proposition~\ref{pro:b-divfree} the functions~$b$ and $b^\circ$
are called~\emph{separating vector fields}.
\begin{proposition}
  \label{pro:int-bu}
  For~$1<p_0<\infty$ let $u,A,b$ be as is
  Definition~\ref{def:fractal-examples}. Then
  \begin{align*}
    \int_{\partial \Omega} (b \cdot \nu) u\,dS = 1.
  \end{align*}
\end{proposition}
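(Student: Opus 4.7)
The plan is to treat the three cases of Definition~\ref{def:fractal-examples} separately, in each case reducing the boundary integral to Lemma~\ref{lem:bd-int} and Proposition~\ref{pro:match-int-bu}.

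For $p_0 = d$ we have $u = u_d$ and $b = b_d$, so the claim is just Proposition~\ref{pro:match-int-bu}.

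For the sub-dimensional case $1 < p_0 < d$, the strategy is to exploit the convolution structure $b = (\mu^{d-1}_\lambda \times \delta_0) * b_d$ to push the computation back to $b_d$ and $u_d$. First I would verify that $b\cdot \nu = 0$ almost everywhere on the lateral faces $\{x_i = \pm 1\}$ with $i<d$: by Proposition~\ref{pro:est-uAb} the support of $|b|$ is contained in $\{d(\bar{x},\frC^{d-1}_\lambda) \leq |x_d|/2\}$, and since $\frC^{d-1}_\lambda \subset [-\tfrac12,\tfrac12]^{d-1}$ the distance is at least $\tfrac12$ on a lateral face, while $|x_d|/2 \leq \tfrac12$, so the intersection has measure zero. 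On the top face $\{x_d=1\}$ the support of $b(\cdot,1)$ lies in $\{d(\bar{x},\frC^{d-1}_\lambda) \leq \tfrac12\}$, which is contained in $(-1,1)^{d-1}$; there $\rho(\bar{x},1)=1$ because $2|x_d|=2>\tfrac12$, hence $u(\bar{x},1)$ is constant on the support of $b(\cdot,1)$. Applying Fubini to the convolution and Lemma~\ref{lem:bd-int} together with $\mu^{d-1}_\lambda(\setR^{d-1})=1$,
\begin{align*}
  \int_{(-1,1)^{d-1}} b(\bar{x},1)\cdot e_d\, d\bar{x}
  = \int_{\setR^{d-1}}\!\int b_d(\bar{x}-\bar{y},1)\cdot e_d\, d\mu^{d-1}_\lambda(\bar{y})\, d\bar{x}
  = \int 1\, d\mu^{d-1}_\lambda = 1.
\end{align*}
Since $b_d\cdot e_d$ is even in $x_d$, the convolution $b\cdot e_d$ inherits the same symmetry, and the bottom face is treated in exactly the same way as in the proof of Proposition~\ref{pro:match-int-bu}. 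Assembling top, bottom and sides reproduces the value $1$ in the same way the matching dimensional case does.

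For the super-dimensional case $p_0 > d$, the roles of $\bar{x}$ and $x_d$ are swapped: now $u = (\delta_0^{d-1}\times \mu_\lambda)*u_d$ is the convolved object, while $A$ is defined directly with $\rho(\bar{x},x_d) = \tilde\rho(d(x_d,\frC_\lambda)/|\bar{x}|)$ in place of $\theta$. By Proposition~\ref{pro:est-uAb} the support of $b$ is contained in $\{2|\bar{x}| \leq d(x_d,\frC_\lambda) \leq 4|\bar{x}|\}$. On a lateral face $\{x_i=\pm 1\}$ one has $|\bar{x}| \geq 1$ but $d(x_d,\frC_\lambda) \leq |x_d|+\tfrac12 \leq \tfrac32$, so $2|\bar{x}| \leq d(x_d,\frC_\lambda)$ fails and $b$ vanishes there. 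On the top face the support is $\{|\bar{x}|\in[\tfrac18,\tfrac14]\}$, on which $\theta((1-y)/|\bar{x}|)=1$ for every $y\in\frC_\lambda$ (because $(1-y)/|\bar{x}| \geq (1/2)/(1/4) = 2$), so $u(\bar{x},1) = \int 1\, d\mu_\lambda = 1$ is again constant on the support of $b$. The remaining integral $\int b(\bar{x},1)\cdot e_d\, d\bar{x}$ is computed directly from the explicit formula: using skew-symmetry and $\divergence_{\bar{x}}(\bar{x}/|\bar{x}|^{d-1})=0$ on $\setR^{d-1}\setminus\{0\}$, one reduces to
\begin{align*}
  b(\bar{x},1)\cdot e_d = \frac{1}{\sigma_{d-1}}|\bar{x}|^{1-d}\,\nabla_{\bar{x}}\rho(\bar{x},1)\cdot\bar{x},
\end{align*}
and the substitution $s = d(1,\frC_\lambda)/|\bar{x}| = \tfrac{1}{2|\bar{x}|}$ in polar coordinates, together with $\tilde\rho(0)=1$ and $\tilde\rho(\infty)=0$, reproduces verbatim the Gauss-theorem argument of Lemma~\ref{lem:bd-int} and gives the value $1$. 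Combining with the bottom face (using the symmetry of $\rho$ in $x_d \mapsto -x_d$) and the vanishing on the sides yields the claim.

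The main obstacle is the super-dimensional case: unlike the sub-dimensional case, $b$ is not a convolution of $b_d$, so one cannot simply pull the identity of Lemma~\ref{lem:bd-int} through a convolution. The key step is to replace the convolution reduction by a direct Gauss-type computation with $\rho$ playing the role of $\theta$; once this is done the bookkeeping of boundary contributions follows the same pattern as in Proposition~\ref{pro:match-int-bu}.
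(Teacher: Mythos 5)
Your proof is correct and, for the case $p_0=d$ and the sub-dimensional case, essentially identical to the paper's: kill the lateral faces, use that $u$ is constant on the support of $b(\cdot,\pm 1)$, reduce by evenness of $b\cdot e_d$ to $\int_{(-1,1)^{d-1}}b(\bar{x},1)\cdot e_d\,d\bar{x}$, and pull the convolution $b=(\mu^{d-1}_\lambda\times\delta_0)*b_d$ through Lemma~\ref{lem:bd-int} with $\mu^{d-1}_\lambda(\setR^{d-1})=1$. The only genuine difference is the super-dimensional case. The paper writes $b=\divergence\big((\rho-1)A_d\big)+b_d$, notes that $g(\bar{x}):=(\rho(\bar{x},1)-1)e_d^TA_d(\bar{x},1)$ lies in $C^\infty_0((-1,1)^{d-1})$, so Gauss annihilates its contribution and Lemma~\ref{lem:bd-int} supplies the value $1$; you instead compute $b\cdot e_d=\sigma_{d-1}^{-1}\abs{\bar{x}}^{1-d}\,\bar{x}\cdot\nabla_{\bar{x}}\rho$ on the top face and evaluate that integral directly. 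This is the same flux computation in different clothes, with one caveat: your one-dimensional substitution $s=d(1,\frC_\lambda)/\abs{\bar{x}}$ tacitly assumes $\rho(\cdot,1)$ is radial, of the form $\tilde\rho\big(d(x_d,\frC_\lambda)/\abs{\bar{x}}\big)$, which Lemma~\ref{lem:smooth-indicator} does not guarantee (it only yields a mollified indicator with the stated bounds). The repair is the Gauss-theorem version you also allude to: since $b\cdot e_d=\divergence_{\bar{x}}\big(\rho(\bar{x},1)\,\sigma_{d-1}^{-1}\abs{\bar{x}}^{1-d}\bar{x}\big)$ away from $\bar{x}=0$, and on the top face $\rho\equiv 0$ for $\abs{\bar{x}}<\tfrac18$ while $\rho\equiv 1$ for $\abs{\bar{x}}>\tfrac14$, the divergence theorem on $(-1,1)^{d-1}$ reduces the integral to the flux of $\sigma_{d-1}^{-1}\abs{\bar{x}}^{1-d}\bar{x}$ through $\partial(-1,1)^{d-1}$, which is $1$ exactly as in Lemma~\ref{lem:bd-int} — morally the same correction the paper performs with its compactly supported $g$. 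Finally, when assembling top and bottom keep the normalization of Proposition~\ref{pro:match-int-bu} in mind: the paper's proofs (and Figure~\ref{fig:revised-checkerboard}) take $u=\pm\tfrac12$ on those faces, whereas with the value $u(\bar{x},1)=1$ you wrote in the super-dimensional case the two faces would double-count and produce $2$; this factor traces back to the mismatch between Definition~\ref{def:zhikov} and the figure rather than to an error in your argument, but it should be stated consistently.
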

\begin{proof}
  The case $p_0=d$ is already contained in
  Proposition~\ref{pro:match-int-bu}.

  Let us continue with the sub-dimensional case $1 < p_0 < d$. Note
  that $b=0$ on~$\partial \Omega$ except on the sets~$\set{x_d = \pm
    1} \cap \partial \Omega$. On these sets $u$ takes the
  values~$\pm \frac 12$ and $\nu = \pm e_d$. Moreover, $b_d$ is even
  with respect to~$x_d$. Thus, 
  \begin{align}
    \label{eq:int-bu1}
    \mathrm{I} := \int_{\partial \Omega} (b \cdot \nu) u\,dS 
    &=
      \int_{(-1,1)^{d-1}} b(\bar{x},1) \cdot e_d\,d\bar{x} =
      \int_{\setR^{d-1}} b(\bar{x},1) \cdot e_d\,d\bar{x}
  \end{align}
  using the first part of Lemma~\ref{lem:bd-int}. By definition of~$b$
  we have
  \begin{align*}
    b &= \divergence\big(\mu_\lambda^{d-1} \times \delta_0 * A_d\big)=
        (\mu_\lambda^{d-1} \times \delta_0) * \divergence(A_d) = 
        (\mu_\lambda^{d-1} \times \delta_0) * b_d.
  \end{align*}
  This,~\eqref{eq:int-bu1}, $\mu_\lambda^{d-1}(\setR^{d-1})=1$ and
  Lemma~\ref{lem:bd-int} imply
  \begin{align*}
    \mathrm{I} &=  \int_{\setR^{d-1}} b_d(\bar{x},1) \cdot e_d 
        \,d\bar{x} = 1.
  \end{align*}
  This proves the sub-dimensional case.

  Let us continue with the super-dimensional case $p_0 > d$. Note
  that $b=0$ on~$\partial \Omega$ except on the sets~$\set{x_d = \pm
    1} \cap \partial \Omega$. On these sets $u_d$ takes the
  values~$\pm \frac 12$ and $\nu = \pm e_d$. Moreover, $b_d\cdot e_d$ is even
  with respect to~$x_d$. Thus, 
  \begin{align}
    \label{eq:int-bu2}
    \mathrm{II} := \int_{\partial \Omega} (b \cdot \nu) u\,dS 
    &=
      \int_{(-1,1)^{d-1}} b(\bar{x},1) \cdot e_d\,d\bar{x}.
  \end{align}
%  using the first part of Lemma~\ref{lem:bd-int}.
  By definition
  of~$b$ we have
  \begin{align*}
    b &= \divergence(\rho A_d) = \divergence\big((\rho-1) A_d\big) +
        \divergence(A_d)
        = \divergence\big((\rho-1) A_d\big) + b_d.
  \end{align*}
  Let $g(\bar{x}) := (\rho(\bar{x})-1) e_d^T A_d(\bar{x},1)$. Then
  $g \in C^\infty_0((-1,1)^{d-1})$ and
  \begin{align*}
    b \cdot e_d = \divergence_{\bar{x}}
    g+ b_d \cdot e_d.
  \end{align*}
  Hence, by~\eqref{eq:int-bu2}, the theorem of Gau{\ss} and
  Lemma~\ref{lem:bd-int}
  \begin{align*}
    \mathrm{II}
    &= 
      \int_{(-1,1)^{d-1}} \divergence_{\bar{x}} g\,d\bar{x} +
      \int_{(-1,1)^{d-1}} b_d(\bar{x},1) \cdot e_d\,d\bar{x} = 0+1 =1.
  \end{align*}
  This proves the super-dimensional case.
\end{proof}
\begin{proposition}
  \label{pro:uAblocal}
  For~$1<p_0<\infty$ with the notation of
  Definition~\ref{def:localuAb} we have
  \begin{enumerate}
  \item $\support(u^\circ), \support(A^\circ), \support(b^\circ)
    \subset [-\frac56,\frac56]^d \compactsubset \Omega$.
  \item $u^\partial, A^\partial, b^\partial \in C^\infty_0(\overline{\Omega}
    \setminus \frS)$.
  \end{enumerate}
\end{proposition}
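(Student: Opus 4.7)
The plan is to verify both claims by exploiting the fact that the contact set $\frS$ sits well inside the region where $\eta \equiv 1$. Concretely, I would first check that in each of the three cases of Definition~\ref{def:fractal-examples} one has
\begin{align*}
  \frS \subset [-\tfrac 12,\tfrac 12]^d \subset (-\tfrac 46,\tfrac 46)^d,
\end{align*}
since in the matching case $\frS=\{0\}$, and in the sub- and super-dimensional cases $\frS$ is a slice of a product of copies of $\frC_\lambda\subset[-\tfrac 12,\tfrac 12]$ (recall $\frC_{\lambda,0}=(-\tfrac 12,\tfrac 12)$). Hence $\eta\equiv 1$ on an open neighborhood of $\frS$, equivalently $1-\eta$ vanishes on a neighborhood of $\frS$.

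For the first claim, I would simply note that $\support\eta\subset[-\tfrac 56,\tfrac 56]^d$ by assumption, so $u^\circ=\eta u$ and $A^\circ=\eta A$ are automatically supported there. Since $b^\circ=\divergence(\eta A)$ and differentiation does not enlarge supports, $b^\circ$ is also supported in $[-\tfrac 56,\tfrac 56]^d\compactsubset\Omega$.

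For the second claim, I would combine the observation of the first paragraph with Proposition~\ref{pro:prop-uAb}, which gives $u,A\in C^\infty(\overline{\Omega}\setminus\frS)$. Since $1-\eta$ vanishes in a neighborhood of $\frS$, both $(1-\eta)u$ and $(1-\eta)A$ extend by zero across $\frS$ to genuinely smooth functions on $\overline{\Omega}$; their supports are contained in the compact set $\overline{\Omega}\setminus(-\tfrac 46,\tfrac 46)^d$, which is at positive distance from $\frS$. Consequently $u^\partial$, $A^\partial$, and $b^\partial=\divergence(A^\partial)$ all lie in $C^\infty_0(\overline{\Omega}\setminus\frS)$.

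The only real substance is the geometric containment $\frS\subset(-\tfrac 46,\tfrac 46)^d$, which rests on the explicit scaling of the Cantor sets; once that is in place, both statements are formal consequences of the Leibniz rule and the regularity of $u$ and $A$ away from $\frS$ established in Proposition~\ref{pro:prop-uAb}.
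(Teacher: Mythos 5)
Your proposal is correct and follows essentially the same route as the paper, whose proof is just a terse appeal to the definition, the containment $\frS \compactsubset (-\frac46,\frac46)^d$, and Proposition~\ref{pro:prop-uAb}; you have merely spelled out the same ingredients (support of $\eta$, $1-\eta$ vanishing near $\frS$, smoothness of $u,A$ away from $\frS$, and that taking $\divergence$ does not enlarge supports) in detail.
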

\begin{proof}
  The claim follows immediately from the definition,
  $\frS \compactsubset (-\frac46,\frac46)^d$ and
  Proposition~\ref{pro:prop-uAb}.
\end{proof}

%% ------------------------------------------------------------
%% ------------------------------------------------------------

\section{Important Consequences}
\label{sec:consequences}

Zhikov used the functions~$u_2$ and $b_2$ in order to derive the
Lavrentiev gap, $H\neq W$ and the different notions of $\px$-harmonic
functions. We show in this section that also our fractal examples
display these phenomena. We will do this in this section in
quite general form and apply it to specific examples in
Section~\ref{sec:applications}. 

\subsection{Energy and Generalized Orlicz Spaces}
\label{ssec:gener-orlicz-space}

In this section we introduce the necessary function spaces, the so
called generalized Orlicz and Orlicz-Sobolev spaces.

We assume that~$\Omega \subset \Rd$ is a domain of finite
measure\footnote{It is no problem to consider infinite domains, but it
  is not needed in this context of counter examples.}. Later in our
applications we will only use~$\Omega= (-1,1)^d$.

We say that~$\oldphi\,:\, [0,\infty) \to [0,\infty]$ is an Orlicz
function if $\oldphi$ is convex, left-continuous, $\oldphi(0)=0$,
$\lim_{t \to 0} \oldphi(t)=0$ and
$\lim_{t \to \infty} \oldphi(t)=\infty$.  The conjugate Orlicz
function~$\oldphi^*$ is defined by
\begin{align*}
  \oldphi^*(s) &:= \sup_{t \geq 0} \big( st - \oldphi(t)\big).
\end{align*}
In particular, $st \leq \oldphi(t) + \oldphi^*(s)$.

In the following we assume that
$\phi\,:\, \Omega \times [0,\infty) \to [0,\infty]$ is a generalized
Orlicz function, i.e. $\phi(x, \cdot)$ is an Orlicz function for
every~$x \in \Omega$ and $\phi(\cdot,t)$ is measurable for
every~$t\geq 0$. We define the conjugate function~$\phi^*$ pointwise,
i.e.  $\phi^*(x,\cdot) := (\phi(x,\cdot))^*$.

We further assume the following additional properties:
\begin{enumerate}
\item We assume that~$\phi$ satisfies the $\Delta_2$-condition,
  i.e. there exists~$c \geq 2$ such that for
  all~$x \in \Omega$ and all~$t\geq 0$
  \begin{align}
    \label{eq:phi-Delta2}
    \phi(x,2t) &\leq c\, \phi(x,t). 
  \end{align}
\item We assume that~$\phi$ satisfies the~$\nabla_2$-condition,
  i.e. $\phi^*$ satisfies the~$\Delta_2$-condition. As a consequence,
  there exist~$s>1$ and~$c> 0$ such that for all $x\in \Omega$,
  $t\geq 0$ and $\gamma \in [0,1]$ there holds
  \begin{align}
    \label{eq:phi-Nabla2}
    \phi(x,\gamma t) \leq c\,\gamma^s \,\phi(x,t).
  \end{align}
\item We assume that $\phi$ and~$\phi^*$ are proper, i.e. for
  every~$t\geq 0$ there holds $\int_\Omega \phi(x,t)\,dx< \infty$ and
  $\int_\Omega \phi^*(x,t)\,dx < \infty$.
\end{enumerate}
Let $L^0(\Omega)$ denote the set of measurable function on~$\Omega$
and $L^1_{\loc}(\Omega)$ denote the space of locally integrable 
functions. 
We define the generalized Orlicz norm by
\begin{align*}
  \norm{f}_{\phix} &:= \inf \biggset{\gamma > 0\,:\, \int_\Omega
                    \phi(x,\abs{f(x)/\gamma})\,dx \leq 1}.
\end{align*}

Then generalized Orlicz space~$L^{\phix}(\Omega)$ is defined as the
set of all measurable functions with finite generalized Orlicz norm
\begin{align*}
  L^{\phix}(\Omega) &:= \bigset{f \in L^0(\Omega)\,:\, \norm{f}_\phix<\infty
}.
\end{align*}

For example the generalized Orlicz function $\phi(x,t) = t^p$
generates the usual Lebesgue space~$L^p(\Omega)$.

The $\Delta_2$-condition of~$\phi$ and~$\phi^*$ ensures that our space
is uniformly convex. The condition that~$\phi$ and $\phi^*$ are proper
ensure that $L^{\phix}(\Omega) \embedding L^1(\Omega)$ and
$L^{\phidx}(\Omega) \embedding L^1(\Omega)$. Thus $L^{\phix}(\Omega)$
and $L^{\phidx}(\Omega)$ are Banach spaces.

We define the generalized Orlicz-Sobolev space~$W^{1,\phix}$ as
\begin{align*}
  W^{1,\phix}(\Omega) &:= \set{w \in W^{1,1}(\Omega)\,:\,
                        \nabla w \in L^{\phix}(\Omega)},
\end{align*}
with the norm
\begin{align*}
 \norm{w}_{1, \phi(\cdot)}:=\norm{w}_{1}+\norm{\nabla w}_{ \phi(\cdot)}.
\end{align*}

In general smooth functions are not dense
in~$W^{1,\phix}(\Omega)$. Therefore, we define~$H^{1,\phix}(\Omega)$
as
\begin{align*}
  H^{1,\phix}(\Omega) &:= \big(\text{closure of~$C^\infty(\Omega) \cap W^{1,\phix}(\Omega)$
                        in~$W^{1,\phix}(\Omega)$}\big).
\end{align*}
See \cite{DieHHR11} and~\cite{HarHas19} for further properties of
these spaces.

We also introduce the corresponding spaces with zero boundary values
as
\begin{align*}
  W^{1,\phix}_0(\Omega) &:= \set{w \in W^{1,1}_0(\Omega)\,:\,
                        \nabla w \in L^{\phix}(\Omega)}
\end{align*}
with same norm as in~$W^{1,\phi(\cdot)}(\Omega)$. And the corresponding space of smooth functions is defined as  
\begin{align*}
  H_0^{1,\phix}(\Omega) &:= \big(\text{closure of~$C^\infty_0(\Omega) \cap W^{1,\phix}(\Omega)$
                        in~$W^{1,\phix}(\Omega)$}\big). 
\end{align*}
The space~$W_0^{1,\phix}(\Omega)$ are exactly those function, which can
be extended by zero to~$W^{1,\phix}(\Rd)$ functions.

Let us define our
energy~$\mathcal{F}\,:\, W^{1,\phix}(\Omega) \to \setR$ by
\begin{align*}
  \mathcal{F}(w) &:= \int_\Omega \phi(x,\abs{\nabla w(x)})\,dx.
\end{align*}
In the language of function spaces~$\mathcal{F}$ is a semi-modular
on~$W^{1,\phix}(\Omega)$ and a modular on~$W^{1,\phix}_0(\Omega)$.

\subsection{\texorpdfstring{H $\neq$ W and H$_0 \neq$ W$_0$}{H != W
    and H0 != W0}}
\label{ssec:HneqW}

In this section we show how to use the function~$u$ and the
vector~$b$ from Definition~\ref{def:fractal-examples} to give examples
for~$W^{1,\phix}(\Omega) \neq H^{1,\phix}(\Omega)$ and
for~$W_0^{1,\phix}(\Omega) \neq H^{1,\phix}_0(\Omega)$. In this
section, we need the following assumption:
\begin{assumption}
  \label{ass:HneqW}
  Let $u,u^\circ, u^\partial,b, b^\circ,b^\partial$ be  as in
  Section~\ref{sec:constr-fract}, i.e.
  Proposition~\ref{pro:prop-uAb}, \ref{pro:est-uAb}, \ref{pro:int-bu}
  and~\ref{pro:uAblocal} hold. Let $\phi$ be such that
  $u \in W^{1,\phix}(\Omega)$ and $b \in L^{\phidx}(\Omega)$.
\end{assumption}
For all $w \in W^{1,\phix}(\Omega)$ we define the continuous functionals
\begin{align}
  \label{eq:defS}
  \begin{aligned}
    \mathcal{S}(w) &:= \int_\Omega b \cdot \nabla w \,dx,
    \\
    \mathcal{S}^\circ(w) &:= \int_\Omega b^\circ \cdot \nabla w \,dx,
    \\
    \mathcal{S}^\partial(w) &:= \int_\Omega b ^\partial \cdot \nabla w
   \,dx.
  \end{aligned}
\end{align}
This is well defined, since
$b, b^\circ, b^\partial \in L^{\phidx}(\Omega)$.
\begin{proposition}
  \label{pro:separating}
  For all $w \in H^{1,\phix}(\Omega)$  we have
  $\mathcal{S}^\circ(w)=0$. Moreover, for all $w \in
  H^{1,\phix}_0(\Omega)$ we have~$\mathcal{S}(w)=0$. 
\end{proposition}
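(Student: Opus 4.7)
The plan is to prove both statements by density: each functional vanishes on the dense subset of smooth functions by Proposition~\ref{pro:b-divfree}, and then extends to the closure by continuity. The only non-trivial ingredient is verifying that $\mathcal{S}$ and $\mathcal{S}^\circ$ are indeed continuous on the ambient space $W^{1,\phix}(\Omega)$.

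First I would establish continuity. By Assumption~\ref{ass:HneqW}, $b \in L^{\phidx}(\Omega)$, and since $\eta$ is bounded with bounded derivative and $A \in W^{1,1}(\Omega) \cap C^\infty(\overline{\Omega}\setminus\frS)$ with $\mathrm{supp}(\eta)\Subset \Omega$, one checks $b^\circ = \divergence(\eta A) = \eta\,b + (\nabla \eta)\cdot A^{T}$ lies in $L^{\phidx}(\Omega)$ as well (and likewise $b^\partial$). By the generalized Hölder inequality in Orlicz spaces we obtain
\begin{align*}
  \abs{\mathcal{S}(w)} &\leq 2 \norm{b}_{\phidx}\, \norm{\nabla w}_{\phix}, &
  \abs{\mathcal{S}^\circ(w)} &\leq 2\, \norm{b^\circ}_{\phidx}\,\norm{\nabla w}_{\phix},
\end{align*}
so both functionals are continuous linear functionals on $W^{1,\phix}(\Omega)$.

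Next, I would use Proposition~\ref{pro:b-divfree}. It asserts $\mathcal{S}^\circ(w)=0$ for every $w\in C^\infty(\Omega)$ and $\mathcal{S}(w)=0$ for every $w\in C^\infty_0(\Omega)$. In particular both identities hold on $C^\infty(\Omega)\cap W^{1,\phix}(\Omega)$ and $C^\infty_0(\Omega)\cap W^{1,\phix}(\Omega)$ respectively.

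Finally, I would invoke the definitions of $H^{1,\phix}(\Omega)$ and $H^{1,\phix}_0(\Omega)$ as closures of these smooth classes in the $W^{1,\phix}$-norm. Given $w\in H^{1,\phix}(\Omega)$, pick $w_n\in C^\infty(\Omega)\cap W^{1,\phix}(\Omega)$ with $w_n\to w$ in $W^{1,\phix}(\Omega)$; then the continuity estimate above gives $\mathcal{S}^\circ(w)=\lim_n \mathcal{S}^\circ(w_n)=0$. The argument for $\mathcal{S}$ on $H^{1,\phix}_0(\Omega)$ is identical using $C^\infty_0$-approximants. The only potential obstacle is the continuity step, which reduces to verifying $b^\circ \in L^{\phidx}(\Omega)$; this follows once one notes that $\eta \in C^\infty_0(\Omega)$ and applies the product-rule expansion for $\divergence(\eta A)$ together with Assumption~\ref{ass:HneqW} and the $\Delta_2$/$\nabla_2$ conditions on $\phi$.
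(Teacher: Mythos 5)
Your proposal is correct and follows essentially the same route as the paper: $\mathcal{S}^\circ$ and $\mathcal{S}$ vanish on $C^\infty(\Omega)$ and $C^\infty_0(\Omega)$ respectively by Proposition~\ref{pro:b-divfree}, and the claim passes to $H^{1,\phix}(\Omega)$ and $H^{1,\phix}_0(\Omega)$ by density and continuity. The only difference is that you spell out the continuity of the functionals (via $b^\circ\in L^{\phidx}(\Omega)$, which holds since $\eta b$ is dominated by $b$ and $A\nabla\eta$ is bounded because $\nabla\eta$ is supported away from $\frS$), a point the paper absorbs into the definition~\eqref{eq:defS} and Assumption~\ref{ass:HneqW}.
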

\begin{proof}
  Due to Proposition~\ref{pro:b-divfree} we have
  $\mathcal{S}^\circ(w)=0$ for~$w \in C^\infty(\Omega)$ and
  $\mathcal{S}(w)=0$ for~$w \in C^\infty_0(\Omega)$. Now,
  the claim follows by density.
\end{proof}
Due to Proposition~\ref{pro:separating} the functionals~$\mathcal{S}$
and~$\mathcal{S}^\circ$ are called \emph{separating functionals}.
\begin{proposition}
  \label{pro:Su}
  There holds
  \begin{enumerate}
  \item  $\mathcal{S}(u) = 0$, $\mathcal{S}(u^\partial) = 1$ and $\mathcal{S}(u^\circ) =
    -1$.
  \item $\mathcal{S^\partial}(u) = 1$,
    $\mathcal{S^\partial}(u^\partial) = 1$ and
    $\mathcal{S^\partial}(u^\circ) = 0$.
  \item  $\mathcal{S}^\circ(u) = -1$, $\mathcal{S}^\circ(u^\partial) =
    0$ and $\mathcal{S}^\circ(u^\circ) = 
    -1$.  
  \end{enumerate}
\end{proposition}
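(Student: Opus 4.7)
The strategy is to exploit linearity through the decompositions $b = b^\circ + b^\partial$ (which follows from $A = \eta A + (1-\eta)A$ and additivity of the rowwise divergence) and $u = u^\circ + u^\partial$. This immediately gives $\mathcal{S} = \mathcal{S}^\circ + \mathcal{S}^\partial$ as linear functionals, together with $\mathcal{S}^\bullet(u) = \mathcal{S}^\bullet(u^\circ) + \mathcal{S}^\bullet(u^\partial)$ for each of the three choices of $\bullet$. Hence once three independent entries of the $3\times 3$ table of values are known, the remaining six are determined by bookkeeping. I will compute $\mathcal{S}(u)$, $\mathcal{S}^\partial(u)$, and $\mathcal{S}^\partial(u^\circ)$ directly.

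The first value is immediate: Proposition~\ref{pro:est-uAb}\ref{itm:est-uAb2} gives $\abs{\nabla u}\cdot\abs{b}=0$ a.e., so $\mathcal{S}(u) = \int_\Omega b\cdot\nabla u\,dx = 0$. For the other two, the key structural observation is that $A^\partial = (1-\eta)A$ vanishes in a neighbourhood of $\frS$ (because $\eta \equiv 1$ on $(-\tfrac46,\tfrac46)^d$ while $\frS \subset (-\tfrac46,\tfrac46)^d$). Hence $b^\partial = \divergence A^\partial$ extends by zero to an element of $C^\infty(\overline{\Omega})$, and the skew-symmetry of $A^\partial$ yields $\divergence b^\partial = 0$ everywhere classically. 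Furthermore $\support b^\circ \subset [-\tfrac56,\tfrac56]^d$ implies $b^\partial = b$ on $\partial\Omega$, while $\support u^\circ \subset [-\tfrac56,\tfrac56]^d$ implies that $u^\circ$ has vanishing trace on $\partial\Omega$.

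Applying the Gauss--Green formula to the smooth field $b^\partial$ paired with the $W^{1,1}(\Omega)$-function $u$,
\begin{align*}
  \mathcal{S}^\partial(u) &= \int_\Omega b^\partial \cdot \nabla u\,dx = \int_{\partial \Omega}(b^\partial\cdot\nu)\,u\,dS - \int_\Omega u\,\divergence b^\partial\,dx = \int_{\partial\Omega}(b\cdot\nu)\,u\,dS = 1,
\end{align*}
the last equality being Proposition~\ref{pro:int-bu}. The same formula applied to $u^\circ$ in place of $u$ collapses to the boundary integral of $(b^\partial\cdot\nu)\,u^\circ$, which vanishes because $u^\circ$ has zero trace. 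Hence $\mathcal{S}^\partial(u^\circ)=0$.

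Combining, from $\mathcal{S}^\partial(u)=1$ and $\mathcal{S}^\partial(u^\circ)=0$ we get $\mathcal{S}^\partial(u^\partial)=1$; from $\mathcal{S}(u)=0$ and $\mathcal{S}^\partial(u)=1$ we get $\mathcal{S}^\circ(u)=-1$. For $\mathcal{S}^\circ(u^\partial)$ I use Proposition~\ref{pro:uAblocal}: $u^\partial$ is smooth with support disjoint from $\frS$, so extending by zero across $\frS$ gives $u^\partial \in C^\infty(\overline{\Omega})$, and Proposition~\ref{pro:b-divfree} yields $\mathcal{S}^\circ(u^\partial) = 0$. The remaining entries follow at once: $\mathcal{S}^\circ(u^\circ) = -1$, $\mathcal{S}(u^\partial) = 1$, and $\mathcal{S}(u^\circ) = -1$. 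The only delicate step is the Gauss--Green computation of $\mathcal{S}^\partial(u)$; it is admissible precisely because $b^\partial$ is smooth all the way up to $\partial\Omega$, so the standard $W^{1,1}$-trace of $u$ suffices to make the boundary integral meaningful.
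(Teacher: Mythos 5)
Your proposal is correct, and it rests on the same three pillars as the paper's proof: $\mathcal{S}(u)=0$ from the disjoint supports of $\nabla u$ and $b$, the value $1$ coming from the boundary integral of Proposition~\ref{pro:int-bu} via integration by parts with the skew-symmetry killing the divergence term, and linearity of the $3\times 3$ table in both arguments to fill in the rest. The difference is in which entries you compute directly and with what tool. The paper evaluates $\mathcal{S}^\partial(u^\partial)$ by pairing $u^\partial$ and $b^\partial$, both of which lie in $C^\infty_0(\overline{\Omega}\setminus\frS)$, and likewise gets $\mathcal{S}^\circ(u^\partial)=0$ and $\mathcal{S}^\partial(u^\circ)=0$ from $\divergence b^\circ=0$ on $\Omega\setminus\frS$ together with the compact support of $b^\circ$; thus every integration by parts involves only functions smooth up to the relevant boundary, and no Sobolev trace theory is needed. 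You instead evaluate $\mathcal{S}^\partial(u)$ and $\mathcal{S}^\partial(u^\circ)$ by Gauss--Green against the globally smooth, divergence-free field $b^\partial$ (your observations that $A^\partial$ vanishes near $\frS$, so $b^\partial\in C^\infty(\overline{\Omega})$ with $\divergence b^\partial=\divergence\divergence A^\partial=0$, and that $b^\partial=b$ on $\partial\Omega$, are all correct), which requires the divergence theorem for $W^{1,1}$ fields and the $L^1$-trace on the Lipschitz cube — a slightly heavier tool than the paper uses, but perfectly valid here since $u$ is smooth near $\partial\Omega$ and $u^\circ$ has zero trace. In exchange you get $\mathcal{S}^\partial(u^\circ)=0$ essentially for free from the vanishing trace, where the paper invokes a separate ``analogous'' integration by parts. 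Your use of Proposition~\ref{pro:b-divfree} for $\mathcal{S}^\circ(u^\partial)=0$, after noting that $u^\partial$ extends smoothly by zero across $\frS$, is also sound, and the final bookkeeping reproduces all nine values consistently.
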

\begin{proof}
  Since $\nabla u \cdot b=0$ everywhere, we have $\mathcal{S}(u)=0$.
  Since
  $u^\partial, v^\partial \in C^\infty_0(\overline{\Omega} \setminus
  \frS)$, we can use partial integration to get
  \begin{align*}
    \mathcal{S}^\partial(u^\partial) &= \int_\Omega b^\partial\cdot  \nabla u^\partial  \,dx
    = \int_{\partial \Omega} (b^\partial \cdot \nu) u^\partial\,ds
      = \int_{\partial \Omega} (b \cdot \nu) u\,ds = 1
  \end{align*}
  using also Proposition~\ref{pro:int-bu}.  Since
  $u^\partial \in C^\infty_0(\overline{\Omega} \setminus \frS)$,
  $b^\circ \in C^\infty(\overline{\Omega} \setminus \frS)$,
  $\support b^\circ \compactsubset \Omega$ and $\divergence b^\circ =0$ on~$\Omega \setminus \frS$, we can use
  partial integration together to get
  \begin{align*}
    \mathcal{S}^\circ(u^\partial) = \int_\Omega b^\circ \cdot \nabla u^\partial
     \,dx = -
    \int_{(-1,1)^d \setminus \frS} \divergence b^\circ \cdot  u^\partial  \,dx = 0.
  \end{align*}
  Analogously, we obtain $\mathcal{S}^\partial(u^\circ)=0$. Now,
  \begin{align*}
    \mathcal{S}^\circ(u^\circ) &= 
    \mathcal{S}(u) -
    \mathcal{S}^\partial(u^\partial) -
    \mathcal{S}^\circ(u^\partial) - 
    \mathcal{S}^\partial(u^\circ) = 0 - 1 - 0 - 0 = -1.
  \end{align*}
  This proves the claim.
\end{proof}
We come to the main result of this subsection.
\begin{theorem}[H $\neq$ W]
  \label{thm:HneqW}
  Under the assumption~\ref{ass:HneqW} there holds
  \begin{enumerate}
  \item $u^\circ \in W^{1,\phix}(\Omega) \setminus
    H^{1,\phix}(\Omega)$.
  \item $u^\circ \in W_0^{1,\phix}(\Omega) \setminus
    H_0^{1,\phix}(\Omega)$.
  \end{enumerate}
\end{theorem}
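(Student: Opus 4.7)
The proof is essentially a pairing argument: the separating functionals $\mathcal{S}$ and $\mathcal{S}^\circ$ vanish on the $H$-spaces but detect $u^\circ$ nontrivially. Explicitly, my plan is to combine Proposition~\ref{pro:separating} (which says $\mathcal{S}^\circ = 0$ on $H^{1,\phi(\cdot)}(\Omega)$ and $\mathcal{S} = 0$ on $H_0^{1,\phi(\cdot)}(\Omega)$) with Proposition~\ref{pro:Su} (which says $\mathcal{S}^\circ(u^\circ) = -1$ and $\mathcal{S}(u^\circ) = -1$) to obtain immediate contradictions.

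First I would verify that $u^\circ$ is in fact a candidate element of the $W$-spaces. Since $u \in W^{1,\phi(\cdot)}(\Omega)$ by Assumption~\ref{ass:HneqW} and $\eta \in C_0^\infty(\Omega)$ is bounded with bounded gradient, the Leibniz rule gives
\begin{align*}
  \nabla u^\circ = \eta\, \nabla u + u\, \nabla \eta,
\end{align*}
and both terms lie in $L^{\phi(\cdot)}(\Omega)$ because $u \in L^\infty(\Omega)$ (Proposition~\ref{pro:prop-uAb}) and $\nabla u \in L^{\phi(\cdot)}(\Omega)$. Hence $u^\circ \in W^{1,\phi(\cdot)}(\Omega)$. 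By Proposition~\ref{pro:uAblocal} the support of $u^\circ$ is compactly contained in $\Omega$, so $u^\circ \in W_0^{1,\phi(\cdot)}(\Omega)$ as well.

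Now suppose, for contradiction, that $u^\circ \in H^{1,\phi(\cdot)}(\Omega)$. Then Proposition~\ref{pro:separating} forces $\mathcal{S}^\circ(u^\circ) = 0$, which contradicts $\mathcal{S}^\circ(u^\circ) = -1$ from Proposition~\ref{pro:Su}. This proves~(1). Similarly, if $u^\circ \in H_0^{1,\phi(\cdot)}(\Omega)$, then Proposition~\ref{pro:separating} yields $\mathcal{S}(u^\circ) = 0$, while Proposition~\ref{pro:Su} gives $\mathcal{S}(u^\circ) = -1$, again a contradiction, proving~(2).

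There is no real obstacle here: the only nontrivial input — the divergence-free property of $b$ and $b^\circ$ together with the boundary integral identity $\int_{\partial\Omega} (b\cdot\nu)\, u\, dS = 1$ — has already been absorbed into Propositions~\ref{pro:separating} and~\ref{pro:Su}. What one should emphasize is that the continuity of $\mathcal{S}$ and $\mathcal{S}^\circ$ on $W^{1,\phi(\cdot)}(\Omega)$ (guaranteed by $b, b^\circ \in L^{\phi^*(\cdot)}(\Omega)$ in Assumption~\ref{ass:HneqW} together with Hölder's inequality in generalized Orlicz spaces) is what allows the density argument in Proposition~\ref{pro:separating} to extend vanishing from $C^\infty$ (resp.\ $C_0^\infty$) to the full $H$-spaces, and this is precisely the step that $H$ cannot escape.
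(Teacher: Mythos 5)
Your proposal is correct and follows essentially the same argument as the paper: pair $u^\circ$ against the separating functionals, using Proposition~\ref{pro:separating} (vanishing on the $H$-spaces) against Proposition~\ref{pro:Su}. The only cosmetic difference is that the paper uses $\mathcal{S}^\circ$ for both parts (since $H_0^{1,\phix}(\Omega)\subset H^{1,\phix}(\Omega)$), while you invoke $\mathcal{S}$ with $\mathcal{S}(u^\circ)=-1$ for the second part, which is equally valid; your explicit Leibniz-rule check that $u^\circ\in W_0^{1,\phix}(\Omega)$ is a harmless elaboration of what the paper takes for granted.
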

\begin{proof}
  Recall that
  $u^\circ \in W^{1,\phix}_0(\Omega) \subset W^{1,\phix}(\Omega)$. Due
  to Proposition~\ref{pro:separating} we
  know that~$\mathcal{S}^\circ=0$ on $H^{1,\phix}(\Omega)$ and therefore
  also on
  $H_0^{1,\phix}(\Omega)$. However, 
  $\mathcal{S}^\circ(u^\circ) = -1$ by proposition~\ref{pro:Su}. This proves
  $u^\circ \notin H^{1,\phix}(\Omega)$ and
  $u^\circ \notin H^{1,\phix}_0(\Omega)$. This proves the claim.
\end{proof}

\subsection{Lavrentiev Gap}
\label{ssec:lavrentiev-gap}

In this section we show how to use the function~$u$ and the vector
field~$b$ from
Definition~\ref{def:fractal-examples} for the Lavrentiev gap. In this
section, we need the following assumption:
\begin{assumption}
  \label{ass:gap}
  Let $u,u^\circ, u^\partial,b, b^\circ,b^\partial$ as in
  Section~\ref{sec:constr-fract}, i.e.
  Proposition~\ref{pro:prop-uAb}, \ref{pro:est-uAb}, \ref{pro:int-bu}
  and~\ref{pro:uAblocal} hold. Let $\phi$ be such that
  $u \in W^{1,\phix}(\Omega)$ and $b \in L^{\phidx}(\Omega)$. Also
  recall, that $\phi^*$ satisfies the~$\Delta_2$-condition.
\end{assumption}
From the~$\Delta_2$-condition of~$\phi^*$, see~\eqref{eq:phi-Nabla2}, it follows that
\begin{align}
  \label{eq:Fdelta2}
  \lim_{t \searrow 0} \frac{\mathcal{F}(t w)}{t}
  &\leq
    \lim_{t \searrow 0} \bigg(   c\, t^{s-1}\int_\Omega \phi(x, \abs{\nabla
    w(x)})\,dx \bigg) =0.
\end{align}
We come to the main result of this subsection.
\begin{theorem}[Lavrentiev gap]
  \label{thm:gap}
  Under the assumptions~\ref{ass:gap} the functional
  \begin{align*}
    \mathcal{G} := \mathcal{F} + \mathcal{S}^\circ
  \end{align*}
  with~$\mathcal{S}^\circ$ defined in~\eqref{eq:defS} has a Lavrentiev gap, i.e.
  \begin{align*}
    \inf \mathcal{G}(W^{1,\phix}_0(\Omega)) < \inf
    \mathcal{G}(H^{1,\phix}_0(\Omega))=0.
  \end{align*}
\end{theorem}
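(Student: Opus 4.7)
The plan is to separately compute (or bound) the two infima in the claimed inequality, using the separating functional $\mathcal{S}^\circ$ as the engine that drives the gap.

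First I would handle the infimum on $H^{1,\phix}_0(\Omega)$. By Proposition~\ref{pro:separating}, $\mathcal{S}^\circ$ vanishes identically on $H^{1,\phix}_0(\Omega)$, so there $\mathcal{G}=\mathcal{F}$. Since $\mathcal{F}(w)=\int_\Omega \phi(x,\abs{\nabla w})\,dx\ge 0$ and $\mathcal{F}(0)=0$ with $0\in H^{1,\phix}_0(\Omega)$, we immediately obtain
\begin{align*}
\inf \mathcal{G}\bigl(H^{1,\phix}_0(\Omega)\bigr)=0.
\end{align*}

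Next I would exhibit an element of $W^{1,\phix}_0(\Omega)$ on which $\mathcal{G}$ is strictly negative. The natural candidate is the localized fractal example $u^\circ$ from Definition~\ref{def:localuAb}; by Assumption~\ref{ass:gap} we have $u^\circ\in W^{1,\phix}_0(\Omega)$, and Proposition~\ref{pro:Su} gives $\mathcal{S}^\circ(u^\circ)=-1$. Testing with the rescaled function $t u^\circ$ for small $t>0$ yields
\begin{align*}
\mathcal{G}(t u^\circ)=\mathcal{F}(t u^\circ)+t\,\mathcal{S}^\circ(u^\circ)=\mathcal{F}(t u^\circ)-t.
\end{align*}
Dividing by $t$ and invoking the $\nabla_2$-bound \eqref{eq:Fdelta2}, we get $\mathcal{F}(tu^\circ)/t\to 0$ as $t\searrow 0$, so $\mathcal{G}(tu^\circ)/t\to -1$. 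In particular, for $t>0$ small enough, $\mathcal{G}(tu^\circ)<0$, which gives
\begin{align*}
\inf \mathcal{G}\bigl(W^{1,\phix}_0(\Omega)\bigr)<0=\inf \mathcal{G}\bigl(H^{1,\phix}_0(\Omega)\bigr).
\end{align*}

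There is no real obstacle here beyond bookkeeping: the substantive work has already been done in Section~\ref{sec:constr-fract}, where the vector field~$b^\circ$ is constructed to be distributionally divergence-free (hence orthogonal to $\nabla H^{1,\phix}$-functions via Proposition~\ref{pro:separating}) yet to satisfy $\int b^\circ\cdot\nabla u^\circ\,dx=-1$ (Proposition~\ref{pro:Su}). The only point that deserves care is that the rescaling argument genuinely uses the $\nabla_2$-condition on $\phi$: it is what guarantees the sublinear decay $\mathcal{F}(tu^\circ)=o(t)$ as $t\searrow 0$, without which the linear gain $-t$ coming from $\mathcal{S}^\circ$ could be swamped by the energy term. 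The $\Delta_2$-assumption in turn ensures $\mathcal{S}^\circ$ is well-defined on $W^{1,\phix}_0(\Omega)$ (since $b^\circ\in L^{\phidx}(\Omega)$ pairs with $\nabla W^{1,\phix}_0\subset L^{\phix}$), closing the argument.
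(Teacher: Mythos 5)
Your proposal is correct and follows essentially the same route as the paper's proof: $\mathcal{G}=\mathcal{F}$ on $H^{1,\phix}_0(\Omega)$ via Proposition~\ref{pro:separating} gives the zero infimum, and testing with $tu^\circ$, using $\mathcal{S}^\circ(u^\circ)=-1$ from Proposition~\ref{pro:Su} together with $\mathcal{F}(tu^\circ)=o(t)$ from~\eqref{eq:Fdelta2}, makes $\mathcal{G}$ negative on $W^{1,\phix}_0(\Omega)$. The only cosmetic difference is your closing remark attributing the well-definedness of $\mathcal{S}^\circ$ to the $\Delta_2$-condition, whereas it follows already from $b^\circ\in L^{\phidx}(\Omega)$ and the Young-type pairing; this does not affect the argument.
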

\begin{proof}
  Due to Proposition~\ref{pro:separating} we have
  $\mathcal{G} = \mathcal{F}$ on~$H^{1,\phix}_0(\Omega)$, which
  implies that $\inf \mathcal{G}(H^{1,\phix}_0(\Omega))= 0$. However,
  for~$t >0$ we have
  \begin{align*}
    \mathcal{G}(t u^\circ) &= \mathcal{F}(t u^\circ) + t\,
                             \mathcal{S}^\circ(u^\circ) = t\, \bigg(
                             \frac{\mathcal{F}(tu^\circ)}{t} - 1 \bigg)
  \end{align*}
  using~$\mathcal{S}^\circ(u^\circ)=-1$ by Proposition~\ref{pro:Su}.
  Since $\lim_{t\to 0} \frac{\mathcal{F}(t u^\circ)}{t} = 0$
  by~\eqref{eq:Fdelta2}, the right-hand side becomes negative for
  small~$t>0$. Thus $\inf \mathcal{G}(W^{1,\phix}_0(\Omega)) < 0$.
\end{proof}

\subsection{\texorpdfstring{H-harmonic $\neq$ W-harmonic}{H-harmonic != W-harmonic}}
\label{ssec:h-harmonic-neq}

In this section we show that the spaces~$W^{1,\phix}(\Omega)$ and
$H^{1,\phix}(\Omega)$ lead to different concepts of $\phix$-harmonic functions.

Let us start by introducing spaces with boundary values: for
$g \in H^{1,\phix}(\Omega)$ we define
\begin{align*}
  H_g^{1,\phix}(\Omega) &:= g + H_0^{1,\phix}(\Omega).
\end{align*}
For $g \in W^{1,\phix}(\Omega)$ we define
\begin{align*}
  W_g^{1,\phix}(\Omega) &:= g + W_0^{1,\phix}(\Omega).
\end{align*}
Since $u^\partial \in W^{1,\phix}(\Omega)$, we can define
\begin{align*}
  h_W &= \argmin \mathcal{F}\big(W^{1,\phix}_{u^\partial}(\Omega)\big).
  % \\
  % h_HW &= \argmin \mathcal{F}\big(H^{1,\phix}_g(\Omega)\big).
\end{align*}
Formally, it satisfies the Euler-Lagrange equation (in the weak sense)
\begin{align*}
  -\Delta_\phix h_W := -\divergence \bigg( \frac{\phi'(x,\abs{\nabla h_W})}{\abs{\nabla
  h_W}} \nabla h_W \bigg) &= 0 \qquad \text{ in $
                            (W^{1,\phix}_0(\Omega))^*$},
\end{align*}
where $\phi'(x,t)$ is the derivative with respect to~$t$.
However, since also $u^\partial \in  H^{1,\phix}(\Omega)$, we can define
\begin{align*}
  h_H &= \argmin \mathcal{F}\big(H^{1,\phix}_g(\Omega)\big).
\end{align*}
Then
\begin{align*}
  -\Delta_\phix h_W :=-\divergence \bigg( \frac{\phi'(x,\abs{\nabla h_H})}{\abs{\nabla
  h_H}} \nabla h_H \bigg) &= 0 \qquad \text{ in $
                            (H^{1,\phix}_0(\Omega))^*$}.
\end{align*}
If $\phi(x,t)=\frac 12 t^2$, then $\Delta_\phix$ is just the standard Laplacian.
If $\phi(x,t)=\frac 1p t^p$, then $\Delta_\phix$ is the $p$-Laplacian.

Thus $h_W$ and $h_H$ are both~$\phix$-harmonic but $h_W$ is
$\phix$-harmonic in the sense of~$W^{1,\phix}$ and $h_H$ is
$\phix$-harmonic with respect to~$H^{1,\phix}$.
Our goal is to provide an example, where these concepts differ. For
this we assume the following:
\begin{assumption}
  \label{ass:harmonic}
  Let $u,u^\circ, u^\partial,b, b^\circ,b^\partial$ as in
  Section~\ref{sec:constr-fract}, i.e.
  Proposition~\ref{pro:prop-uAb}, , \ref{pro:est-uAb}, \ref{pro:int-bu}
  and~\ref{pro:uAblocal} hold. Let $\phi$ be such that
  $u \in W^{1,\phix}(\Omega)$ and $b \in L^{\phidx}(\Omega)$.

  Moreover, assume that there exists $s,t > 0$ such that
  \begin{align}
    \label{eq:ass_FF*}
    \mathcal{F}(tu) + \mathcal{F}^*(s b) &< t s,
  \end{align}
  where
  \begin{align*}
    \mathcal{F}^*(g) &:= \int_\Omega \phi^*(x\, \abs{g(x)})\,dx.  
  \end{align*}
\end{assumption}
We come to the main result of this subsection.
\begin{theorem}[H-harmonic $\neq$ W-harmonic]
  \label{thm:harmonic}
  Under the Assumption~\ref{ass:harmonic} there
  exists~$g \in H^{1,\phix}(\Omega)$ such that the $\phix$-harmonic
  functions~$h_W$ in the sense of~$W^{1,\phix}$ and the
  $\phix$-harmonic function~$h_H$ in the sense of~$H^{1,\phix}$ with
  the same boundary values~$g$ differ. In particular, for
  \begin{align*}
    h_W &= \argmin
          \mathcal{F}\big(W^{1,\phix}_{u^\partial}(\Omega)\big) \qquad
    \\
    h_H &= \argmin \mathcal{F}\big(H^{1,\phix}_{u^\partial}(\Omega)\big)
  \end{align*}
  we have $h_W \neq h_H$ and $\mathcal{F}(h_W) < \mathcal{F}(h_H)$.
\end{theorem}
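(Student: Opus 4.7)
The overall strategy is the classical duality trick for Lavrentiev-gap problems: bound $\mathcal{F}(h_H)$ from below using the separating vector field $b$ via Fenchel--Young, bound $\mathcal{F}(h_W)$ from above using $u$ itself (after a suitable rescaling) as a competitor, and appeal to Assumption~\ref{ass:harmonic} to separate the two bounds strictly.

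First I would establish that the minimizers $h_W$ and $h_H$ exist and are unique. The $\Delta_2$ and $\nabla_2$ hypotheses make $W^{1,\phix}_0(\Omega)$ a uniformly convex reflexive Banach space on which $\mathcal{F}$ is convex, coercive, and weakly lower semicontinuous; the admissible sets $W^{1,\phix}_{u^\partial}$ and $H^{1,\phix}_{u^\partial}$ are weakly closed affine subspaces, and the strict convexity implied by \eqref{eq:phi-Nabla2} yields uniqueness of each minimizer.

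For the lower bound on $\mathcal{F}(h_H)$, every $h \in H^{1,\phix}_{u^\partial}$ can be written $h = u^\partial + w$ with $w \in H^{1,\phix}_0$. Proposition~\ref{pro:separating} gives $\mathcal{S}(w) = 0$ and Proposition~\ref{pro:Su} gives $\mathcal{S}(u^\partial) = 1$, hence $\mathcal{S}(h) = 1$. Applying pointwise Fenchel--Young, $sb \cdot \nabla h \leq \phi(x,\abs{\nabla h}) + \phi^*(x, s\abs{b})$, and integrating,
\[
  s \;=\; s\,\mathcal{S}(h) \;=\; \int_\Omega sb\cdot \nabla h\,dx \;\leq\; \mathcal{F}(h) + \mathcal{F}^*(sb),
\]
so $\mathcal{F}(h_H) \geq s - \mathcal{F}^*(sb)$ for every $s>0$.

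For the upper bound, $u - u^\partial = u^\circ \in W^{1,\phix}_0(\Omega)$, so $u$ is admissible in the $W$-problem and $\mathcal{F}(h_W) \leq \mathcal{F}(u)$. To use the free parameter $t$ in Assumption~\ref{ass:harmonic}, I would take boundary data $g := tu^\partial$ (in place of $u^\partial$): then $tu - g = tu^\circ \in W^{1,\phix}_0$, so $tu \in W^{1,\phix}_g$, the upper bound becomes $\mathcal{F}(h_W) \leq \mathcal{F}(tu)$, and the lower-bound argument gives $\mathcal{F}(h_H) \geq ts - \mathcal{F}^*(sb)$. Combining with \eqref{eq:ass_FF*},
\[
  \mathcal{F}(h_W) \;\leq\; \mathcal{F}(tu) \;<\; ts - \mathcal{F}^*(sb) \;\leq\; \mathcal{F}(h_H),
\]
which in particular forces $h_W \neq h_H$. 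The main obstacle I foresee is reconciling the ``In particular'' statement (which uses $g = u^\partial$) with the general parameter $t$ appearing in \eqref{eq:ass_FF*}; in the concrete applications of Section~\ref{sec:applications} this will be handled by absorbing the scaling into the specific construction of $u$ and $b$, so that one may effectively take $t=1$ and keep $g = u^\partial$.
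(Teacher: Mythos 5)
Your proposal is correct and follows essentially the same route as the paper: Fenchel--Young with the separating field $sb$ plus $\mathcal{S}(h)=t\,\mathcal{S}(u^\partial)=t$ for the lower bound on the $H$-minimizer, the competitor $tu\in W^{1,\phix}_{tu^\partial}(\Omega)$ for the upper bound on the $W$-minimizer, and Assumption~\ref{ass:harmonic} to separate them. Your resolution of the scaling issue — taking boundary data $g=tu^\partial$ rather than $u^\partial$ — is exactly what the paper's proof does (it sets $h_H:=h_t$, $h_W:=w_t$ with data $tu^\partial$), so no gap remains.
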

\begin{proof}
  We define $g := t u^\partial \in H^{1,\phix}(\Omega)$, with~$t>0$ to
  be chosen later. Now, let
  \begin{align*}
    w_t &:= \argmin \mathcal{F}(W^{1,\phix}_{tu^\partial}(\Omega)),
    \\
    h_t &:= \argmin \mathcal{F}(H^{1,\phix}_{tu^\partial}(\Omega)).
  \end{align*}
  We have  $t u = tu^\partial + tu^\circ \in W_{t
    u^\partial}^{1,\phix}(\Omega)$. Thus,
  \begin{align}
    \label{eq:Fwt}
    \mathcal{F}(w_t) &\leq \mathcal{F}(t u).
  \end{align}
  From the other hand, using Young's inequality,  we get for all~$s>0$ that 
  \begin{align*}
    \mathcal{F}(h_t)
    &= \int_\Omega \phi(x,\abs{\nabla h_t})\,dx
    \\
    &\geq \int_\Omega  \nabla h_t \cdot (s b)\,dx - \phi^*(x,
      s \abs{b}) \big)\,dx
    \\
    &= s\,\mathcal{S}(h_t) - \mathcal{F}^*(s b).
  \end{align*}
  Since $h_t - tu^\partial \in H_0^{1,\phix}(\Omega)$, we have
  $\mathcal{S}(h_t - tu^\partial)=0$ by
  Theorem~\ref{pro:separating}. This and
  $\mathcal{S}(u^\partial)=1$ by Proposition~\ref{pro:Su} imply
  \begin{align}
    \label{eq:Fht}
    \mathcal{F}(h_t)
    &= s\,\mathcal{S}(t u^\partial) - \mathcal{F}^*(s b)
    = t s - \mathcal{F}^*(s b).
  \end{align}
  Combining~\eqref{eq:Fwt} and~\eqref{eq:Fht} we get
  \begin{align*}
    \mathcal{F}(h_t) - \mathcal{F}(w_t) \geq t s -
    \mathcal{F}(tu) - \mathcal{F}^*(s b)
  \end{align*}
  for all $t,s > 0$. By Assumption~\ref{eq:ass_FF*} we can
  find~$t,s>0$ such that the right hand-side of last inequality is positive. For
  these~$t,s$ we have $\mathcal{F}(h_t) > \mathcal{F}(w_t)$. This
  proves the claim for $h_H := h_t$ and $h_W := w_t$.
\end{proof}

%% ------------------------------------------------------------
%% ------------------------------------------------------------

\section{Applications}
\label{sec:applications}

We will now apply our results to the following three models:
\begin{tabbing}
  Variable exponent space: \qquad\= $\phi(x,t) = \tfrac{1}{p(x)} t^{p(x)}$.
  \\[2mm]
  Double phase potential: \> $\phi(x,t) =  \tfrac 1p t^p + a(x) \tfrac
  1q t^q = \tfrac 1p t^p + \tfrac 1q
  (\omega(x)t)^q$.
  \\[2mm]
  Weighted $p$-energy: \> $\phi(x,t) = \tfrac 1p a(x) t^p = \tfrac 1p (\omega(x) t)^p$.
\end{tabbing}

\subsection{Variable Exponents}
\label{ssec:variable-exponents}

In this section we study the variable exponent model. In particular,
we assume that
\begin{align*}
  \phi(x,t) = \tfrac{1}{p(x)} t^{p(x)},
\end{align*}
where $p\,:\, \Omega \to (1,\infty)$ is a variable exponent. The
corresponding energy is
\begin{align*}
  \mathcal{F}(w) &= \int_\Omega \tfrac{1}{p(x)} \abs{\nabla w}^{p(x)}\,dx.
\end{align*}
We abbreviate $W^{1,\px}(\Omega) := W^{1,\phix}(\Omega)$ and
similarly~$W^{1,\px}_0$, $H^{1,\px}$ and $H^{1,\px}_0$. 

Our main result of the variable exponent model is the following:
\begin{theorem}
  \label{thm:main-px}
  Let~$\Omega=(-1,1)^d$. Let $1< p^- < p^+ < \infty$. Then there
  exists a variable exponent~$p\,:\, \Omega \to [p^-,p^+]$ such that
  \begin{enumerate}
  \item $H^{1,\px}(\Omega) \neq W^{1,\px}(\Omega)$ and
    $H_0^{1,\px}(\Omega) \neq W_0^{1,\px}(\Omega)$.
  \item There exists a linear, continuous
    functional~$\mathcal{S}^\circ\,:\, W^{1,\px}(\Omega) \to \setR$ such
    that functional~$\mathcal{G} := \mathcal{F} + \mathcal{S}^\circ$
    has a Lavrentiev gap, i.e.
    \begin{align*}
      \inf \mathcal{G}\big(W^{1,\px}_0(\Omega) \big)
      &<
        \inf \mathcal{G}\big(H^{1,\px}_0(\Omega)\big) =0.
    \end{align*}
  \item The notions of $\px$-harmonic functions with respect to
    $W^{1,\px}$ and $H^{1,\px}$ differ.
  \end{enumerate}
\end{theorem}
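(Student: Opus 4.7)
The plan is to apply the general machinery of Section~\ref{sec:consequences} (Theorems~\ref{thm:HneqW}, \ref{thm:gap}, \ref{thm:harmonic}) to the fractal data $u,A,b$ from Definition~\ref{def:fractal-examples}. First I would pick a ``target'' saddle value $p_0 \in (p^-,p^+)$ and, by choosing $\lambda \in (0,\tfrac12)$ in the appropriate one of the three cases of Definition~\ref{def:fractal-examples}, build a contact set $\frS$ of the correct fractal dimension $\frD$ so that $p_0 = d$, $p_0 = d-\frD$, or $p_0 = (d-\frD)/(1-\frD)$ accordingly. This yields $u$, $b$ together with the separating functional $\mathcal{S}^\circ$ with $\mathcal{S}^\circ(u^\circ)=-1$ (Proposition~\ref{pro:Su}).

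The key geometric input is Proposition~\ref{pro:est-uAb}: the supports of $\nabla u$ and $b$ are disjoint, lying in two complementary regions characterized by the ratio of $d(\bar{x},\frC^{m}_\lambda)$ to the transverse coordinate. I would exploit this to define a variable exponent taking two distinct values on the two regions. Concretely, pick $p_1,p_2$ with $p^- \le p_1 < p_0 < p_2 \le p^+$ and, using a smooth cutoff constructed via Lemma~\ref{lem:smooth-indicator} (analogous to the $\rho$ already used in Definition~\ref{def:fractal-examples}), set $p(x) := p_1 + (p_2-p_1)\eta_p(x)$, where $\eta_p \in C^\infty(\overline{\Omega}\setminus\frS)$ equals $1$ on a neighborhood of $\support b$ and vanishes on $\support \nabla u$. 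Then $p \in [p^-,p^+]$, and since $1 < p^- \le p(\cdot) \le p^+ < \infty$, the generated $\phi(x,t) = \tfrac{1}{p(x)}t^{p(x)}$ automatically satisfies $\Delta_2$, $\nabla_2$, and is proper.

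The integrability verifications $u \in W^{1,\px}(\Omega)$ and $b \in L^{\pdx}(\Omega)$ are then automatic: on $\support \nabla u$ one has $p(\cdot)=p_1 < p_0$, and the computation underlying Corollary~\ref{cor:ub-p0} shows $\nabla u \in L^{p_1}$ there; on $\support b$ one has $p'(\cdot) = p_2' < p_0'$, giving $b \in L^{p_2'}$. Hence Assumption~\ref{ass:gap} is met and items~(i) and~(ii) follow directly from Theorems~\ref{thm:HneqW} and~\ref{thm:gap}. For item~(iii), the remaining obstacle is the Young-type inequality~\eqref{eq:ass_FF*} of Assumption~\ref{ass:harmonic}. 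Since $\mathcal{F}(tu) \lesssim t^{p_1}$ and $\mathcal{F}^*(sb) \lesssim s^{p_2'}$, setting $t = T$, $s = T^\alpha$ with any $\alpha \in (p_1-1,\,p_2-1)$ (such an interval is nonempty because $p_1 < p_2$) makes both exponents $p_1$ and $\alpha p_2'$ strictly less than $1+\alpha$, so $\mathcal{F}(tu) + \mathcal{F}^*(sb) = o(ts)$ as $T \to \infty$ and the inequality holds. Theorem~\ref{thm:harmonic} then supplies the ambiguity of $\px$-harmonic functions. The main delicate point is really the simultaneous choice of $p_1,p_2$ and the cutoff $\eta_p$ so that the disjoint supports of $\nabla u$ and $b$ carry the ``right'' exponent; after that, everything is an application of the already-established abstract theorems.
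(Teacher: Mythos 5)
Your proposal is correct and follows essentially the same route as the paper: choose $p_0\in(p^-,p^+)$, take $u,b,\mathcal{S}^\circ$ from Definition~\ref{def:fractal-examples}, define an exponent equal to a value below $p_0$ on $\support\nabla u$ and above $p_0$ on $\support b$, verify Assumptions~\ref{ass:HneqW}, \ref{ass:gap}, \ref{ass:harmonic} via Corollary~\ref{cor:ub-p0} and the scaling $s\eqsim t^{p_0-1}$, and invoke Theorems~\ref{thm:HneqW}, \ref{thm:gap}, \ref{thm:harmonic}. The only (inessential) deviations are that you smooth the transition of $p$ via a cutoff rather than using the paper's sharp two-valued exponent $p^\pm$, and you parametrize the Young-inequality scaling by a general $\alpha\in(p_1-1,p_2-1)$ instead of the paper's specific choice.
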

\begin{proof}
  Choose $p_0$ with $p^- < p_0 < p^+$. Now, let~$u,b$ be as in
  Definition~\ref{def:fractal-examples} and $\mathcal{S}^\circ$ as
  in~\eqref{eq:defS}.
  We begin with the definition of the variable exponent~$p$.
  \begin{enumerate}
  \item (Matching the dimension; Zhikov) $p_0=d$: Define
    \begin{align*}
      p(x) &:=
             \begin{cases}
               p^- &\quad \text{for $\abs{x_d} \leq \abs{\bar{x}}$},
               \\
               p^+ &\quad \text{for $\abs{x_d} > \abs{\bar{x}}$}.
             \end{cases}
    \end{align*}
  \item (Sub-dimensional) $1<p_0<d$: Define
    \begin{align*}
      p(x) &:=
             \begin{cases}
               p^- &\quad \text{for $\abs{x_d} \leq d(\bar{x},\frC^{d-1}_\lambda)$},
               \\
               p^+ &\quad \text{for $\abs{x_d} > d(\bar{x},\frC^{d-1}_\lambda)$}.
             \end{cases}
    \end{align*}
  \item (Super-dimensional) $p_0 > d$:
    Define
    \begin{align*}
      p(x) &:=
             \begin{cases}
               p^- &\quad \text{for
                 $d(x_d,\frC_\lambda) \leq \abs{\bar{x}}$},
               \\
               p^+ &\quad \text{for
                 $d(x_d,\frC_\lambda) > \abs{\bar{x}}$}.
             \end{cases}
    \end{align*}
  \end{enumerate}
  In particular, it follows from Proposition~\ref{pro:est-uAb} that
  \begin{align}
    \label{eq:supp-p+-}
    \set{\nabla u \neq 0} \subset \set{p=p^-} \quad \text{and} \quad
    \set{b \neq 0} \subset \set{p=p^+}.
  \end{align}
  Thus, with Corollary~\ref{cor:ub-p0} and $p^- < p_0 < p^+$ we obtain
  \begin{align*}
    \norm{\nabla u}_{L^{\px}(\Omega)}
    &\lesssim
      \norm{\nabla u}_{L^{p^-}(\Omega)} \lesssim
      \norm{\nabla u}_{L^{p_0,\infty}(\Omega)} < \infty
    \\
    \norm{b}_{L^{\pdx}(\Omega)}
    &\lesssim
      \norm{b}_{L^{(p^+)'}(\Omega)} \lesssim
      \norm{b}_{L^{p_0,\infty}(\Omega)} < \infty.
  \end{align*}
  This proves $u \in W^{1,\px}(\Omega)$ and $b \in
  L^\pdx(\Omega)$. This proves the validity of Assumption~\ref{ass:HneqW}.

  Since $1 < p^- \leq p \leq p^+ < \infty$, it follows that $\phi$
  and~$\phi^*$ satisfy the~$\Delta_2$ condition. Thus
  Assumption~\ref{ass:gap} also holds.

  Using~\eqref{eq:supp-p+-} we obtain for all~$s,t>0$ 
  \begin{align*}
    \mathcal{F}(tu) +
    \mathcal{F}^*(sb)
    &= t^{p^-}
      \mathcal{F}(u) + s^{(p^+)'}
      \mathcal{F}^*(b)
    \\
    &= \tfrac{1}{p_0} t^{p_0} \big( t^{p^--p_0} p_0
      \mathcal{F}(u) \big) + \tfrac{1}{p_0'} s^{p_0'} \big(
      s^{(p^+)'-p_0'} p_0'
      \mathcal{F}^*(b)  \big).
  \end{align*}
  Now, fix $s:= t^{p_0}-1$. Then for suitable large~$t$ (and therefore
  large~$s$) we obtain
  \begin{align*}
    \mathcal{F}(tu) +
    \mathcal{F}^*(sb)
    &\leq \tfrac{1}{p_0} t^{p_0} \cdot \tfrac 12 +
                \tfrac{1}{p_0'} s^{p_0'} \tfrac 12 = \tfrac 12 ts < ts.
  \end{align*}
  This proves Assumption~\ref{ass:harmonic}.

  Overall, we have constructed $u$, $b$, and $p$ such that the
  Assumptions~\ref{ass:HneqW}, \ref{ass:gap} and~\ref{ass:harmonic}
  holds.  Now, the claim follows from the results of
  Theorem~\ref{thm:HneqW}, \ref{thm:gap} and~\ref{thm:harmonic} of
  Section~\ref{sec:consequences}.
\end{proof}

The exponent in Theorem~\ref{thm:main-px} was discontinuous at the
singular set~$\frS$. The following result shows that it is also
possible to construct a uniformly continuous exponent with the same
phenomena.  However, this exponent is not $\log$-H\"older continuous,
since this would imply the $H^{1,\px}=W^{1,\px}$ by means of
convolution, see \cite{Zhi95} and~\cite[Section~4.6]{DieHHR11}.
\begin{theorem}
  \label{thm:main-px-cont}
  Let~$\Omega=(-1,1)^d$ with~$d \geq 2$. Let $1< p_0 < \infty$. Let
  $\frS$ (the fractal contact set) be as in
  Definition~\ref{def:fractal-examples}.  Then there exists a
  uniformly continuous variable exponent~$p$ with saddle points
  on~$\frS$ and $p=p_0$ on~$\frS$ and
  \begin{enumerate}
  \item $H^{1,\px}(\Omega) \neq W^{1,\px}(\Omega)$ and
    $H_0^{1,\px}(\Omega) \neq W_0^{1,\px}(\Omega)$.
  \item There exists a linear, continuous
    functional~$\mathcal{S}^\circ\,:\, W^{1,\px}(\Omega) \to \setR$ such
    that functional~$\mathcal{G} := \mathcal{F} + \mathcal{S}^\circ$
    has a Lavrentiev gap, i.e.
    \begin{align*}
      \inf \mathcal{G}\big(W^{1,\px}_0(\Omega) \big)
      &<
        \inf \mathcal{G}\big(H^{1,\px}_0(\Omega)\big) =0.
    \end{align*}
  \item The notions of $\px$-harmonic functions with respect to
    $W^{1,\px}$ and $H^{1,\px}$ differ.
  \end{enumerate}
\end{theorem}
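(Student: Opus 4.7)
The plan is to construct $p$ as a continuous smoothing of the discontinuous exponent used in Theorem~\ref{thm:main-px}, while preserving the essential separation $p < p_0$ on $\support(\nabla u)$ and $p > p_0$ on $\support(b)$. The key observation from the previous proof is that $\nabla u \in L^{p_0,\infty}(\Omega)$ and $b \in L^{p_0',\infty}(\Omega)$ are only at the Marcinkiewicz borderline; the slack we need to exploit is logarithmic in size, so we can afford a modulus of continuity that tends to zero at $\frS$ much slower than $1/\log(1/r)$.

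I would let $\psi(r) := c / \log\log(1/r)$ for small $r>0$ and set $\psi(0)=0$, extended smoothly to $(0,\infty)$ with $\psi$ bounded by some small constant. Using the same geometric partition as in Theorem~\ref{thm:main-px} (the sub/super/matching split from Definition~\ref{def:fractal-examples}), I would define
\begin{align*}
  p(x) := p_0 - \psi\bigl(d(x,\frS)\bigr)\, \sigma(x),
\end{align*}
where $\sigma$ is a smooth function valued in $[-1,1]$ that equals $+1$ on a neighbourhood of $\support(\nabla u)$ and $-1$ on a neighbourhood of $\support(b)$ (with a smooth transition in between). The transition region can be inserted in the ``gap'' between the two supports, which are uniformly separated away from $\frS$ by the angular geometry of the building blocks. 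On $\frS$ we have $p=p_0$, and $p$ has the required saddle-point structure. Uniform continuity of $p$ follows because $\psi$ is a continuous modulus vanishing at $0$; the failure of $\log$-Hölder continuity is immediate since $1/\log\log(1/r) \gg 1/\log(1/r)$ as $r \to 0$.

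Next I would verify $u\in W^{1,\px}(\Omega)$ and $b \in L^{\pdx}(\Omega)$ by direct modular computation. In the sub-dimensional case, using Proposition~\ref{pro:est-uAb} and Lemma~\ref{lem:cantor-estimates}, the relevant integral reduces to a sum over dyadic scales $r_k = 2^{-k}$ of the form $\sum_k r_k^{\psi(r_k)}$, and the choice $\psi(r)=c/\log\log(1/r)$ gives $r_k^{\psi(r_k)} \sim 2^{-ck/\log k}$, which is summable; the symmetric computation with $p'$ shows $b\in L^{\pdx}(\Omega)$. The super-dimensional and matching cases are analogous with the roles of $\bar x$ and $x_d$ swapped appropriately. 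Since $1 < p^- \leq p \leq p^+ < \infty$ on $\Omega$, the $\Delta_2$ and $\nabla_2$ conditions hold, so Assumptions~\ref{ass:HneqW} and~\ref{ass:gap} are satisfied. For Assumption~\ref{ass:harmonic} I would repeat the scaling argument from the proof of Theorem~\ref{thm:main-px}: since $\nabla u$ and $b$ have disjoint supports, $\mathcal{F}(tu) = \int_{\{\nabla u \neq 0\}} \tfrac{1}{p(x)} t^{p(x)} |\nabla u|^{p(x)} dx$ is dominated by $t^{p^-}$ times a finite constant for large $t$, while $\mathcal{F}^*(sb) \lesssim s^{(p^+)'}$, and choosing $s = t^{p_0 - 1}$ with $t$ large yields $\mathcal{F}(tu) + \mathcal{F}^*(sb) \ll ts$, since $p^- < p_0 < p^+$.

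The main obstacle is the integrability check in Step~2: one must verify that the slightly enlarged exponent inside the modular integrals, which oscillates continuously around the critical value $p_0$, still produces a convergent sum. This requires ensuring that the transition region of $\sigma$ lies strictly outside the supports of $\nabla u$ and $b$ (so that on those supports the sign of $p - p_0$ is the right one), and that $\psi$ decays slowly enough to be uniformly continuous but fast enough that $\sum 2^{-ck/\log k}$ converges. Once this balance is struck, Theorems~\ref{thm:HneqW}, \ref{thm:gap}, and~\ref{thm:harmonic} apply verbatim to conclude all three claims.
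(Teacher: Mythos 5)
Your construction of the exponent and the integrability check are essentially the paper's own route: the paper takes $\sigma(t)=(\log(e+1/t))^{-\kappa}$, $0<\kappa<1$, and interpolates between $p_0-\sigma$ and $p_0+\sigma$ with an angular cutoff, while you take a double-log modulus; both fail log-H\"older, and your dyadic sum $\sum_k r_k^{\psi(r_k)}$ is the same computation as the paper's integral $\int_0^1 t^{-1+c_1\sigma(t)}\,dt<\infty$, so Assumptions~\ref{ass:HneqW} and~\ref{ass:gap} are fine. The genuine gap is in your verification of Assumption~\ref{ass:harmonic}. You propose to ``repeat the scaling argument from Theorem~\ref{thm:main-px}'', bounding $\mathcal{F}(tu)\lesssim t^{p^-}$ and $\mathcal{F}^*(sb)\lesssim s^{(p^+)'}$ with constants $p^-<p_0<p^+$. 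In the continuous construction no such constants exist on the relevant supports: on $\set{\nabla u\neq 0}$ you have $p(x)=p_0-\psi(d(x,\frS))$ and this set accumulates at $\frS$, so $\sup_{\set{\nabla u\neq0}}p=p_0$; likewise $\inf_{\set{b\neq0}}p=p_0$, hence $\sup_{\set{b\neq 0}}p'=p_0'$. The only uniform bounds are $\mathcal{F}(tu)\leq t^{p_0}\mathcal{F}(u)$ and $\mathcal{F}^*(sb)\leq s^{p_0'}\mathcal{F}^*(b)$ for $t,s\geq1$, and with your choice $s=t^{p_0-1}$ one has $t^{p_0}=s^{p_0'}=ts$, so this yields only $\mathcal{F}(tu)+\mathcal{F}^*(sb)\leq\big(\mathcal{F}(u)+\mathcal{F}^*(b)\big)\,ts$, which need not be smaller than $ts$. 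As written, this step fails; it is precisely the point where the continuous-exponent case is harder than Theorem~\ref{thm:main-px}.

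The missing idea (this is what the paper does) is a two-region argument: write $\frS_\epsilon:=\set{x:\,d(x,\frS)\leq 5\epsilon}$ and split each modular into $\frS_\epsilon$ and $\Omega\setminus\frS_\epsilon$. Since $\phi(\cdot,\abs{\nabla u}),\,\phi^*(\cdot,\abs{b})\in L^1(\Omega)$, choose $\epsilon$ so small that the contributions from $\frS_\epsilon$, even after multiplication by the worst factors $t^{p_0}$ resp.\ $s^{p_0'}$, are below $\tfrac14\cdot\tfrac1{p_0}t^{p_0}$ resp.\ $\tfrac14\cdot\tfrac1{p_0'}s^{p_0'}$; outside $\frS_\epsilon$ the exponent is separated from $p_0$ by $\psi(\epsilon)$ with the correct sign on each support, so those terms carry extra factors $t^{-\psi(\epsilon)}$ resp.\ $s^{(p_0+\psi(\epsilon))'-p_0'}$ and become small for $t,s$ large. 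Then $\mathcal{F}(tu)+\mathcal{F}^*(sb)\leq\tfrac12\big(\tfrac1{p_0}t^{p_0}+\tfrac1{p_0'}s^{p_0'}\big)=\tfrac12\,ts<ts$ with $s=t^{p_0-1}$. Equivalently, a dominated-convergence argument showing $\mathcal{F}(tu)=o(t^{p_0})$ and $\mathcal{F}^*(sb)=o(s^{p_0'})$ (valid because $p<p_0$ a.e.\ on $\set{\nabla u\neq0}$ and $p>p_0$ a.e.\ on $\set{b\neq0}$) closes the gap; with either repair the rest of your argument goes through.
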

\begin{proof}
  The proof is similar to the one of Theorem~\ref{thm:main-px}. So we
  only point out the difference. Let~$u,b$ be as in
  Definition~\ref{def:fractal-examples} and $\mathcal{S}^\circ$ as
  in~\eqref{eq:defS}.  We have to show that
  $\nabla u \in L^\px(\Omega)$ and $b \in L^\pdx(\Omega)$ and to
  verify Assumption~\ref{ass:harmonic}.

  Let $\sigma(t) := \frac{1}{(\log(e+1/t))^\kappa}$ with
  $0<\kappa< 1$. Then for any~$c_1 > 0$ there holds
  \begin{align}
    \label{eq:p-cont-aux}
    \begin{aligned}
      \int_0^1 t^{-1+c_1 \sigma(t)}\,dt &= \int_0^1 \exp \bigg(-\log t +
      \frac{c_1}{(\log(e+1/t))^{\kappa-1}} \bigg)\,dt
      \\
      &\leq c + \int_0^{1/e} \exp \bigg(-\log t +
      \frac{c_1\,c}{\abs{\log(t)}^{\kappa-1}} \bigg)\,dt
      \\
      &= c + \int_1^\infty \exp\big( - c_1\,c\, r^{1-\kappa}\big)\,dr
      < \infty.
    \end{aligned}
  \end{align}
  \begin{enumerate}
  \item 
    We begin with the case of matching the dimension~$p_0=d$.
    Let~$\theta_p \in C^\infty_0((0,\infty))$ be such that
    $\indicator_{(2,\infty)} \leq \theta_p \leq \indicator_{(\frac 12,
      \infty)}$ and $\norm{\theta_p'}_\infty \le 6$ and define
    \begin{alignat*}{2}
      &p^-(x)& &:= p_0 - \sigma(\abs{x_d}) = d - \sigma(\abs{x_d}),
      \\
      &p^+(x) &&:= p_0 + \sigma(\abs{x_d}) = d + \sigma(\abs{x_d}),
      \\
      &p(x) &&:= p^-(x) \bigg( 1 - \theta_p\bigg(\frac{\abs{x_d}}{\abs{\bar{x}}}\bigg)
      \bigg) + p^+(x)\theta_p\bigg(\frac{\abs{x_d}}{\abs{\bar{x}}}\bigg).
    \end{alignat*}
    Then~$p$ is uniformly continuous with modulus of
    continuity~$\sigma$. It follows by Proposition~\ref{pro:est-uAb},
    Lemma~\ref{lem:cantor-estimates},~\eqref{eq:p-cont-aux} and $p_0=d$
    that
    \begin{align}
      \label{eq:supp_p1}
      \set{\nabla u \neq 0} \subset \set{p(x) = p^-(x)}
      \quad \text{and} \quad \set{b \neq 0} \subset \set{p(x) =  p^+(x)},
    \end{align}
    and
    \begin{align*}
      \int_\Omega \phi(x,\abs{\nabla u})\,dx
      &\lesssim  \int_\Omega
        \abs{x_d}^{p(x_d)-d}          \indicator_{\set{
        2 \abs{x_d} \leq \abs{\bar{x}}
        \leq 4
        \abs{x_d}}} \,dx
      \\
      & \lesssim \int_0^1 \abs{x_d}^{\sigma(\abs{x_d})-1}\,dx_d = \int_0^1
        t^{-1+\sigma(t)}\,dt < \infty.
      \\
      \int_\Omega \phi^*(x,\abs{b})\,dx
      &\lesssim \int_\Omega \abs{x_d}^{(1-d)p'(x)} \indicator_{\set{
        2 \abs{x_d} \leq \abs{\bar{x}} \leq 4
        \abs{x_d}}} \,dx
      \\
      & \lesssim \int_0^1 \abs{x_d}^{(1-d)p'(x) + d-1}\,dx_d
        = \int_0^1 t^{-1+\frac{\sigma(t)}{p_0 + \sigma(t)-1} }
        \,dt < \infty.
    \end{align*}
    This proves $\nabla u \in L^\px(\Omega)$
    and~$b \in L^\pdx(\Omega)$.
  \item 
    We continue with the sub-dimensional case $1< p_0 < d$.  Let
    $\rho \in C^\infty(\setR^d \setminus \frS)$ be such that (using
    Lemma~\ref{lem:smooth-indicator})
    \begin{enumerate}
    \item
      $\indicator_{\set{d(\bar{x},\frC^{d-1}_\lambda) \leq \frac 12 \abs{x_d}}}
      \leq \rho_a \leq \indicator_{\set{d(\bar{x},\frC^{d-1}_\lambda)
          \leq 2 \abs{x_d}}}$.
    \item
      $\abs{\nabla \rho_a} \lesssim \abs{x_d}^{-1}
      \indicator_{\set{\frac 12 \abs{x_d} \leq d(\bar{x},\frC^{d-1}_\lambda) \leq
          2 \abs{x_d}}}$.
    \end{enumerate}
    and define
    \begin{alignat*}{2}
      &p^-(x) &&:= p_0-\sigma(\abs{x_d}),
      \\
      &p^+(x) &&:= p_0+\sigma(\abs{x_d}),
      \\
      &p(x) &&:= p^-(x)\,(1-\rho_a)(x)  + p^+(x)\rho_a(x) .
    \end{alignat*}  
    Then~$p$ is uniformly continuous with modulus of
    continuity~$\sigma$. It follows by Proposition~\ref{pro:est-uAb},
    Lemma~\ref{lem:cantor-estimates},~\eqref{eq:p-cont-aux} and $p_0=d-\frD$
    that
    \begin{align}
      \label{eq:supp_p2}
      \set{\nabla u \neq 0} \subset \set{p = p^-(x)}
      \quad \text{and} \quad \set{b \neq 0} \subset \set{p(x) = p^+(x)},
    \end{align}
    and
    \begin{align*}
      \lefteqn{\int_\Omega \phi(x,\abs{\nabla u})\,dx
      \lesssim  \int_\Omega
      \abs{x_d}^{
      \sigma(x_d)-(d-\frD)}          \indicator_{\set{
      2 \abs{x_d} \leq d(\bar{x}, \frC^{d-1}_\lambda)
      \leq 4
      \abs{x_d}}} \,dx } \qquad &
      \\
                                & \lesssim \int_0^1 \abs{x_d}^{\sigma(\abs{x_d})-1}\,dx_d = \int_0^1
                                  t^{-1+\sigma(t)}\,dt < \infty,
      \\
      \lefteqn{\int_\Omega \phi^*(x,\abs{b})\,dx
      \lesssim \int_\Omega \abs{x_d}^{(\frD+1-d)p'(x)} \indicator_{\set{
      2 \abs{x_d} \leq \abs{\bar{x}} \leq 4
      \abs{x_d}}} \,dx} \qquad &
      \\
                                & \lesssim \int_0^1 \abs{x_d}^{(\frD+1-d)p'(x) + d-1-\frD}\,dx_d
                                  = \int_0^1 t^{-1+\frac{\sigma(t)}{p_0 + \sigma(t)-1} }\,dt < \infty.
    \end{align*}
    This proves $\nabla u \in L^\px(\Omega)$
    and~$b \in L^\pdx(\Omega)$.
  \item 
    Let us turn to the super-dimensional case~$p_0 > d$.  Let
    $\rho \in C^\infty(\setR^d \setminus \frS)$ be such that (using
    Lemma~\ref{lem:smooth-indicator})
    \begin{enumerate}
    \item
      $\indicator_{\set{d(x_d,\frC_\lambda) \leq \frac 1 2 \abs{\bar{x}}}}
      \leq \rho_a \leq \indicator_{\set{d(x_d,\frC_\lambda)
          \leq 2 \abs{\bar{x}}}}$.
    \item
      $\abs{\nabla \rho_a} \lesssim \abs{\bar{x}}^{-1}
      \indicator_{\set{\frac 12 \abs{\bar{x}} \leq
          d(x_d,\frC_\lambda) \leq 2 \abs{\bar{x}}}}
      $
    \end{enumerate}
    and define
    \begin{alignat*}{2}
      &p^-(x)& &:= p_0-\sigma(\abs{\bar{x}}),
      \\
      &p^+(x)& &:= p_0+\sigma(\abs{\bar{x}}),
      \\
      &p(x)& &:= p^-(x)\,\rho_a(x)  + p^+(x)(1-\rho_a)(x).
    \end{alignat*}  
    Then~$p$ is uniformly continuous with modulus of
    continuity~$\sigma$. It follows by Proposition~\ref{pro:est-uAb},
    Lemma~\ref{lem:cantor-estimates},~\eqref{eq:p-cont-aux},
    $p_0=\frac{d-\frD}{1-\frD}$ and $1-\frD = \frac{d-1}{p_0-1}$
    that
    \begin{align*}
      \label{eq:supp_omega3}
      \set{\nabla u \neq 0} \subset \set{p(x) =p^-(x)}
      \quad \text{and} \quad \set{b \neq 0} \subset \set{p(x) =p^+(x) },
    \end{align*}
    and
    \begin{align*}
      \lefteqn{\int_\Omega \phi(x,\abs{\nabla u})\,dx
      \lesssim  \int_\Omega
      \abs{\bar{x}}^{(\frD-1)p(x)}   \indicator_{\set{
      2 \abs{\bar{x}} \leq d(x_d,\frC_\lambda)
      \leq 4
      \abs{\bar{x}}}} \,dx} \qquad&
      \\
                                  & \lesssim \int\limits_{(0,1)^{d-1}} \abs{\bar{x}}^{(1-d)\frac{p(x)-1}{p_0-1}} \,d\bar{x} \leq \int_0^{\sqrt{d}}
                                    t^{\frac{d-1}{p_0-1} \sigma(t) -1}\,dt < \infty,
      \\
      \lefteqn{\int_\Omega \phi^*(x,\abs{b})\,dx
      \lesssim \int_\Omega  \abs{\bar{x}}^{(1-d)p'(x) } \indicator_{\set{
      2 \abs{\bar{x}} \leq d(x_d,\frC_\lambda) \leq 4
      \abs{\bar{x}}}} \,dx} \qquad &
      \\
                                  & \lesssim \int\limits_{(0,1)^{d-1}} \abs{\bar{x}}^{(\frD+1-d)p'(x) + d-1-\frD}\,d\bar{x}
                                    \lesssim \int_0^{\sqrt{d}} t^{-1+\frac{d-1}{(p_0+\sigma(t)-1)(p_0-1)}\sigma(t)} \,dt < \infty.
    \end{align*}
    This proves $\nabla u \in L^\px(\Omega)$
    and~$b \in L^\pdx(\Omega)$.
  \end{enumerate}
  We have proved $\nabla u \in L^\px(\Omega)$
  and~$b \in L^\pdx(\Omega)$ for all~$1 < p_0 < \infty$.  We are now
  going to verify Assumption~\ref{ass:harmonic}. We restrict ourselves
  to the sub-dimensional case, since the other cases are similar.

  We will show that
  \begin{align*}
    \mathcal{F}(tu) + \mathcal{F}^*(sb) \leq \tfrac 12  st
  \end{align*}
  for suitable large~$s,t$.

  Let $s,t > 1$ (to be fixed later).  For $\epsilon>0$ we define
  the~$\epsilon$-neighborhood of~$\frS$ by
  \begin{align*}
    \frS_\epsilon := \set{x \,:\, d(x,\frS) \leq 5 \epsilon}.
  \end{align*}
  Now, with the definition of~$p^\pm$ and~\eqref{eq:supp_p1} we obtain
  \begin{align*}
    \lefteqn{\mathcal{F}(tu ) + \mathcal{F}^*(sb)} \quad
    &
    \\
    &=   \int_{\frS_\epsilon} \phi(x, t\abs{\nabla u})\,dx +  \int_{\Omega \setminus \frS_\epsilon} \phi(x, t\abs{\nabla
      u})\,dx 
    \\
    &+ \int_{\frS_\epsilon} \phi^*(x, s\abs{b})\,dx  + \int_{\Omega
      \setminus \frS_\epsilon} \phi^*(x, s\abs{b})\,dx 
    \\
    &\leq 
      t^{p_0} \int_{\frS_\epsilon} \phi(x, \abs{\nabla u})\,dx +  t^{p_0 - \sigma(\epsilon)} \int_{\Omega \setminus
      \frS_\epsilon} \phi(x, t\abs{\nabla u})\,dx 
    \\
    &\quad 
      +  s^{p_0'} \int_{\frS_\epsilon} \phi^*(x, s\abs{b})\,dx 
      + s^{(p_0+\sigma(\epsilon))'}
      \int_{\Omega \setminus \frS_\epsilon} \phi^*(x, \abs{b})\,dx
    \\
    &\leq 
      \tfrac{1}{p_0}t^{p_0}  \bigg(p_0 \int_{\frS_\epsilon} \phi(x,
      \abs{\nabla u})\,dx +  p_0 t^{- \sigma(\epsilon)} \int_{\Omega \setminus
      \frS_\epsilon} \phi(x, t\abs{\nabla u})\,dx  \bigg)
    \\
    &\quad 
      +  \frac{1}{p_0'} s^{p_0'} \bigg( p_0'\int_{\frS_\epsilon} \phi^*(x, s\abs{b})\,dx 
      + p_0' s^{(p_0+\sigma(\epsilon))'-p_0'}
      \int_{\Omega \setminus \frS_\epsilon} \phi^*(x, \abs{b})\,dx \bigg)
    \\
    &=:       \tfrac{1}{p_0}t^{p_0} \big( \textrm{I} + \textrm{II} \big)
    +       \tfrac{1}{p_0'}s^{p_0'} \big( \textrm{III} + \textrm{IV} \big)
  \end{align*}
  Since
  $\phi(\cdot, \abs{\nabla u}), \phi^*(\cdot, \abs{b}) \in
  L^1(\Omega)$, we can choose~$\epsilon_0>0$ small such
  that~$\textrm{I},\textrm{III} < \tfrac 14$ for
  all~$\epsilon \in (0, \epsilon_0)$. Then choose~$t_0,s_0$ large
  (depending on~$\epsilon_0$) such that
  ~$\textrm{II},\textrm{IV} < \tfrac 14$ for all~$t \geq t_0$ and
  $s\geq s_0$. Thus, we obtain
  \begin{align*}
    \mathcal{F}(tu ) + \mathcal{F}^*(sb) &\leq \tfrac 12 \Big(
                                           \tfrac{1}{p_0}t^{p_0} +
                                           \tfrac{1}{p_0'}s^{p_0'} \Big)
  \end{align*}
  for all~$t\geq t_0$ and $s \geq s_0$. Now, the choice $s:=t^{p_0-1}$
  implies for large~$t$ that
  \begin{align*}
    \mathcal{F}(tu ) + \mathcal{F}^*(sb) &\leq \tfrac 12 st < st.
  \end{align*}
  This proves Assumption~\ref{ass:harmonic}.

  Overall, we have constructed $u$, $b$, and $p$ such that the
  Assumptions~\ref{ass:HneqW}, \ref{ass:gap} and~\ref{ass:harmonic}
  hold.  Now, the claim follows from the results from
  Theorem~\ref{thm:HneqW}, \ref{thm:gap} and~\ref{thm:harmonic} of
  Section~\ref{sec:consequences}.
\end{proof}
Theorem~\ref{thm:main-px} and Theorem~\ref{thm:main-px-cont} show in
particular that the dimensional threshold is not important for the
presence of the Lavrentiev gap and the non-density of smooth
functions.
\begin{remark}
  \label{rem:greek}
  At this point we also have to mention the work of~\cite{KosYan15},
  since it seemingly contradicts our results. The authors claimed that
  $H^{1,\px}(\setR^d)=W^{1,\px}(\setR^d)$ if~$p^- \geq d$
  \cite[Theorem~4.1]{KosYan15} or if $2 \leq p^- < d$ and
  $p^+ < \frac{d p^-}{d-p^-}$ \cite[Theorem~4.2]{KosYan15}. Our
  examples show that both claims are wrong.\footnote{The mistake is in
    Theorem~3.3 of~\cite{KosYan15}. On page~203 the mollified
    gradient~$D_j \omega^i_\lambda$ has no majorant in~$L^{\px}$
    independent of the mollification parameter~$\lambda$.}
\end{remark}

\subsection{Double Phase Potential}
\label{ssec:double-phase-potent}

In this section we study the double phase model. In particular,
we assume that
\begin{align*}
  \phi(x,t) &= \tfrac 1p t^p + a(x) \tfrac
  1q t^q = \tfrac 1p t^p + \tfrac 1q
  (\omega(x)t)^q,
\end{align*}
where $1 < p < q$ with a weights $a,\omega \geq 0$. The corresponding
energy is
\begin{align*}
  \mathcal{F}(w) &= \int_\Omega \tfrac 1p \abs{\nabla w}^p + a(x) \tfrac
                   1q \abs{\nabla w}^q\,dx = \int_\Omega \tfrac 1p \abs{\nabla w}^p + \tfrac 1q
                   \big(\omega(x) \abs{\nabla w}\big)^q\,dx.
\end{align*}
Let us denote by~$C^k$ the space of $k$-times differentiable
functions. Moreover, denote by~$C^{k+\beta}$ for $\beta \in (0,1)$ and
$k \in \setN_0$ the space of functions from~$C^k$ whose $k$-th
derivatives are $\beta$-H\"older continuous.
\begin{theorem}
  \label{thm:main-double-phase}
  Let~$\Omega=(-1,1)^d$ with $d\geq 2$. Let $p>1$ and
  $q > p + \alpha\, \max \set{1,\frac{p-1}{d-1}}$
  with~$\alpha>0$. Then there
  exists~$a \in C^{\alpha}(\overline{\Omega})$ such that
  \begin{enumerate}
  \item $H^{1,\phix}(\Omega) \neq W^{1,\phix}(\Omega)$ and
    $H_0^{1,\phix}(\Omega) \neq W_0^{1,\phix}(\Omega)$.
  \item There exists a linear, continuous
    functional~$\mathcal{S}^\circ\,:\, W^{1,\phix}(\Omega) \to \setR$ such
    that functional~$\mathcal{G} := \mathcal{F} + \mathcal{S}^\circ$
    has a Lavrentiev gap, i.e.
    \begin{align*}
      \inf \mathcal{G}\big(W^{1,\phix}_0(\Omega) \big)
      &<
        \inf \mathcal{G}\big(H^{1,\phix}_0(\Omega)\big) =0.
    \end{align*}
  \item The notions of $\px$-harmonic functions with respect to
    $W^{1,\px}$ and $H^{1,\px}$ differ.
  \end{enumerate}
\end{theorem}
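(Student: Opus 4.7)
The plan is to mirror the strategy of Theorem~\ref{thm:main-px}: I construct $u,b$ from Definition~\ref{def:fractal-examples} with a suitably chosen $p_0>p$, together with a weight $a\in C^{0,\alpha}(\overline{\Omega})$, verify Assumptions~\ref{ass:HneqW}, \ref{ass:gap} and~\ref{ass:harmonic}, and then invoke Theorems~\ref{thm:HneqW}, \ref{thm:gap} and~\ref{thm:harmonic}. I take $p_0:=p+\epsilon$ with $\epsilon>0$ small, using the sub-dimensional regime when $p<d$ and the super-dimensional regime when $p\ge d$. The weight is designed so that $\set{a>0}$ contains $\set{b\neq 0}$ while $a\equiv 0$ on $\set{\nabla u\neq 0}$; concretely, in the sub-dimensional case I set
\begin{align*}
  a(x):=\big(\max\set{0,\,\abs{x_d}-c\,d(\bar x,\frC_\lambda^{d-1})}\big)^\alpha
\end{align*}
with $c$ slightly larger than $2$. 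This is $C^{0,\alpha}(\overline\Omega)$, has the required support properties by Proposition~\ref{pro:est-uAb}, and satisfies $a(x)\eqsim\abs{x_d}^\alpha$ on $\set{b\neq 0}$; the super-dimensional case uses the symmetric construction with the roles of $\bar x$ and $x_d$ swapped.

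To verify Assumption~\ref{ass:HneqW} I use that $\phi(x,\abs{\nabla u})=\tfrac1p\abs{\nabla u}^p$ on $\set{\nabla u\neq 0}$ (because $a=0$ there), so $u\in W^{1,\phi(\cdot)}(\Omega)$ follows from $\nabla u\in L^{p_0,\infty}(\Omega)\subset L^p(\Omega)$ (on the bounded domain $\Omega$) by Corollary~\ref{cor:ub-p0}. For $b$ I rely on the pointwise bound
\begin{align*}
  \phi^*(x,\sigma)\le \tfrac{1}{q'}\,a(x)^{1-q'}\sigma^{q'}
\end{align*}
valid on $\set{a>0}$, which is the conjugate of the dominating $q$-part of $\phi$. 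Combining this bound with the estimates of Proposition~\ref{pro:est-uAb} and Lemma~\ref{lem:cantor-estimates} reduces the integrability $\int\phi^*(x,\abs{b})\,dx<\infty$ to a one-dimensional computation whose sharp threshold reads $q>p_0+\alpha$ in the sub-dimensional case and $q>p_0+\alpha(p_0-1)/(d-1)$ in the super-dimensional case. Both match the hypothesis $q>p+\alpha\max\set{1,(p-1)/(d-1)}$ after $\epsilon$ is chosen small enough. Assumption~\ref{ass:gap} is automatic from $1<p<q<\infty$, which yields the $\Delta_2$-condition on both $\phi$ and $\phi^*$.

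For Assumption~\ref{ass:harmonic} the vanishing of $a$ on $\set{\nabla u\neq 0}$ gives $\mathcal{F}(tu)=c_0 t^p$, while the bound above yields $\mathcal{F}^*(sb)\le c_2 s^{q'}$ with $c_0,c_2<\infty$. Setting $s=\gamma t^{p-1}$, the target inequality $\mathcal{F}(tu)+\mathcal{F}^*(sb)<ts$ becomes
\begin{align*}
  c_0+c_2\gamma^{q'}\,t^{(p-1)q'-p}<\gamma.
\end{align*}
Since $q>p$ is equivalent to $(p-1)q'<p$, the middle term vanishes as $t\to\infty$, so any $\gamma>c_0$ and $t$ sufficiently large will do the job. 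With the three assumptions verified, Theorems~\ref{thm:HneqW}, \ref{thm:gap} and~\ref{thm:harmonic} finish the proof.

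The main obstacle is the $C^{0,\alpha}$-regular construction of $a$ together with the delicate integrability estimate for $a^{1-q'}\abs{b}^{q'}$: the interplay between $\alpha$, the gap $q-p$, and the fractal dimension $\frD$ (which is determined by $p_0$) is precisely what forces the sharp condition on $q-p$ stated in the theorem, and the refined factor $(p-1)/(d-1)$ in the max emerges only from the super-dimensional computation, where the Cantor set lies along the $x_d$-axis and the transverse volume growth is controlled by $1-\frD=(d-1)/(p_0-1)$.
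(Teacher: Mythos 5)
Your overall strategy is exactly the paper's: pick $p_0=p+\epsilon$ with $q>p_0+\alpha\max\set{1,\frac{p_0-1}{d-1}}$, build a $C^\alpha$ weight that vanishes on $\set{\nabla u\neq 0}$ and is comparable to $\abs{x_d}^\alpha$ (resp.\ $\abs{\bar x}^\alpha$) on $\set{b\neq 0}$, bound $\phi^*(x,t)\le \tfrac1{q'}a(x)^{-1/(q-1)}t^{q'}$ on $\set{a>0}$, run the weak-Lebesgue computation of Lemma~\ref{lem:cantor-weak} to get precisely the thresholds $q>p_0+\alpha$ and $q>p_0+\alpha\frac{p_0-1}{d-1}$, and verify Assumption~\ref{ass:harmonic} by a scaling choice of $s$ in terms of $t$ (your $s=\gamma t^{p-1}$ is a fine variant of the paper's $s=t^{p_0-1}$). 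The only structural difference is the formula for the weight: the paper takes $a(x)=\abs{x_d}^\alpha\rho_a(x)$ with the smoothed indicator of Lemma~\ref{lem:smooth-indicator}, while you take $a(x)=(\max\set{0,\abs{x_d}-c\,d(\bar x,\frC_\lambda^{d-1})})^\alpha$.

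There are two concrete problems with that formula as you stated it. First, the constant: you need $c<2$ (any $c\in[\tfrac12,2)$, e.g.\ $c=1$), not ``slightly larger than $2$''. Since $\set{b\neq0}\subset\set{d(\bar x,\frC^{d-1}_\lambda)\le\tfrac12\abs{x_d}}$, the lower bound $a\gtrsim\abs{x_d}^\alpha$ there requires $1-\tfrac c2>0$; with $c>2$ your weight vanishes on the part of $\set{b\neq 0}$ where $\tfrac1c\abs{x_d}\le d(\bar x,\frC^{d-1}_\lambda)\le\tfrac12\abs{x_d}$, and on that set the bound $\phi^*(x,\abs{b})\le\tfrac1{q'}a^{-1/(q-1)}\abs{b}^{q'}$ is unavailable; all that remains there is $\phi^*(x,\abs{b})\le\tfrac1{p'}\abs{b}^{p'}$, and $b\notin L^{p'}$ near $\frS$ because $p<p_0$ and $b$ is only in $L^{p_0',\infty}$, so the verification of $b\in L^{\phidx}(\Omega)$ breaks exactly where it matters. (With $c\ge\tfrac12$ the vanishing on $\set{\nabla u\neq0}\subset\set{d(\bar x,\frC^{d-1}_\lambda)\ge2\abs{x_d}}$ still holds, so the fix is only the constant.) Second, the regularity claim: the theorem allows any $\alpha>0$, and for $\alpha>1$ your $a$ is not $C^\alpha$ in the sense $C^{k,\beta}$, because the distance function $\bar x\mapsto d(\bar x,\frC^{d-1}_\lambda)$ has ridge points inside $\set{a>0}$ where $\nabla a$ jumps; the paper's choice $\abs{x_d}^\alpha\rho_a(x)$, with $\rho_a$ smooth away from $\frS$ and $\abs{\nabla\rho_a}\lesssim\abs{x_d}^{-1}$, is what gives $a\in C^\alpha(\overline\Omega)$ for all $\alpha>0$. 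For $\alpha\le1$ your construction (with the corrected constant, and the symmetric version $(\max\set{0,d(x_d,\frC_\lambda)-c\abs{\bar x}})^\alpha$ in the super-dimensional case) does work and reproduces the paper's estimates.
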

\begin{proof}
  Since $q>p + \alpha\, \max \set{1,\frac{p-1}{d-1}}$  and $p +
  \alpha\, \max \set{1,\frac{p-1}{d-1}}$ is continuous in~$p$, we can
  choose~$p_0>p$ such that $q > p_0 + \alpha\, \max
  \set{1,\frac{p_0-1}{d-1}}$. In particular, $p < p_0 < q$.
  
  Now, let~$u,b$ be as in Definition~\ref{def:fractal-examples} and
  $\mathcal{S}^\circ$ as in~\eqref{eq:defS}.  We begin with the
  definition of our weight~$a(x)$,
  resp.~$\omega(x) = (a(x))^{\frac 1q}$.
  \begin{enumerate}
  \item (Matching the dimension; Zhikov) $p_0=d$:
    Let~$\theta_a \in C^\infty_0((0,\infty))$ be such that
    $\indicator_{(2,\infty)} \leq \theta_a \leq
    \indicator_{(\frac 12,
      \infty)}$ and $\norm{\theta_a'}_\infty \le 6$. We  define
    \begin{align*}
      a(x) &:= \abs{x_d}^\alpha \theta_a\bigg(\frac{\abs{x_d}}{\abs{\bar{x}}}\bigg).
    \end{align*}
  \item (Sub-dimensional) $1<p_0<d$: Let  $\rho \in C^\infty(\setR^d \setminus \frS)$ be such
    that (using Lemma~\ref{lem:smooth-indicator})
    \begin{enumerate}
    \item
      $\indicator_{\set{d(\bar{x},\frC^{d-1}_\lambda) \leq \frac 12 \abs{x_d}}}
      \leq \rho_a \leq \indicator_{\set{d(\bar{x},\frC^{d-1}_\lambda)
          \leq 2 \abs{x_d}}}$.
    \item
      $\abs{\nabla \rho_a} \lesssim \abs{x_d}^{-1}
      \indicator_{\set{\frac 12 \abs{x_d} \leq d(\bar{x},\frC^{d-1}_\lambda) \leq
          2 \abs{x_d}}}$.
    \end{enumerate}
    We define
    \begin{align*}
      a(x) &:= \abs{x_d}^\alpha\, \rho_a(x).
    \end{align*}
  \item (Super-dimensional) $p_0 > d$:  Let $\rho_a \in \setR^d$ be such
    that (using Lemma~\ref{lem:smooth-indicator})
    \begin{enumerate}
    \item
      $\indicator_{\set{d(x_d,\frC_\lambda) \leq \frac 1 2 \abs{\bar{x}}}}
      \leq \rho_a \leq \indicator_{\set{d(x_d,\frC_\lambda)
          \leq 2 \abs{\bar{x}}}}$.
    \item
      $\abs{\nabla \rho_a} \lesssim \abs{\bar{x}}^{-1}
      \indicator_{\set{\frac 12 \abs{\bar{x}} \leq
          d(x_d,\frC_\lambda) \leq 2 \abs{\bar{x}}}}
      $
    \end{enumerate}
    We define
    \begin{align*}
      a(x) &:= \abs{x_d}^\alpha\, (1-\rho_a)(x).
    \end{align*}
  \end{enumerate}
  Then is follows easily by the properties of~$\rho_a$ that
  $a \in C^\alpha(\overline{\Omega})$. Moreover, in the
  sub-dimensional case and the case of matching the dimension we have
  \begin{subequations}
    \label{eq:supp-a}
    \begin{align}
      \set{\nabla u \neq 0} \subset \set{a=0} \quad \text{and} \quad
      \set{b \neq 0} \subset \set{a =\abs{x_d}^\alpha},
    \end{align}
    while in the super-dimensional case we have
    \begin{align}
      \set{\nabla u \neq 0} \subset \set{a=0} \quad \text{and} \quad
    \set{b \neq 0} \subset \set{a =\abs{\bar{x}}^\alpha},
    \end{align}
  \end{subequations}
  Thus, it follows from $p < p_0$ and Corollary~\ref{cor:ub-p0} that
  \begin{align*}
    \norm{\nabla u}_{L^{\phix}(\Omega)}
    &\lesssim
      \norm{\nabla u}_{L^{p}(\Omega)} \leq 
      \norm{\nabla u}_{L^{p_0,\infty}(\Omega)} < \infty.
  \end{align*}
  In the following let~$x$ be such that $b(x)>0$. Then
  $a(x) = \abs{x_d}^\alpha > 0$ for $p_0 \leq d$ and
  $a(x) = \abs{\bar{x}}^\alpha > 0$ for $p_0 > d$. Since
  $\phi(x,t) \geq a(x) \frac 1q t^q = \frac 1q (\omega(x) t)^q$, it
  follows that
  $\phi^*(x,t)\leq \frac 1{q'} (t/\omega(x))^{q'} = \frac 1{q'}
  a(x)^{-\frac{1}{q-1}} t^{p'}=: \psi(x,t)$. This implies
  \begin{subequations}
    \label{eq:double-phi*-psi}
    \begin{alignat}{2}
      \phi^*(x,\abs{b})
      &\leq \frac{1}{q'}  \abs{x_d}^{\frac{-\alpha}{q-1}} \abs{b}^{q'} =
        \psi(x,\abs{b}) &\qquad& \text{for $p_0 \leq d$},
      \\
      \intertext{and}
      \phi^*(x,\abs{b})
      &\leq \frac{1}{q'}  \abs{
        \bar{x}}^{\frac{-\alpha}{q-1}} \abs{b}^{q'} =
        \psi(x,\abs{b})& \qquad &\text{for $p_0 > d$},
    \end{alignat}
  \end{subequations}
  We claim that~$\psi(\cdot,\abs{b}) \in L^{r,\infty}(\Omega)$ for
  some~$r>1$. To prove this we distinguish three cases using
  Proposition~\ref{pro:est-uAb}:
  \begin{enumerate}
  \item (Matching the dimension; Zhikov) $p_0=d$:
    \begin{align*}
      \phi^*(x,\abs{b})
      &\lesssim
        \abs{x_d}^{\frac{-\alpha}{q-1}}
        \abs{x_d}^{(1-d)q'} \indicator_{\set{2\abs{\bar{x}} \leq
        \abs{x_d}}}.
    \end{align*}
    Thus, by Lemma~\ref{lem:cantor-weak} we have
    $\phi^*(\cdot,\abs{b}) \in L^{r,\infty}(\Omega)$ with
    $r:= \frac{d(q-1)}{\alpha+(d-1)q}$. Now, $r>1$ is equivalent to
    $q > d+\alpha = p_0 + \alpha$, which is true by choice of~$p_0$.
  \item (Sub-dimensional) $1<p_0=d - \frD <d$: 
    \begin{align*}
      \phi^*(x,\abs{b})
      &\lesssim
        \abs{x_d}^{\frac{-\alpha}{q-1}}
        \abs{x_d}^{(\frD+1-d)q'} \indicator_{\set{d(\bar{x}, \frC^{d-1}_\lambda) \leq
        \frac 12\abs{x_d}}}.
    \end{align*}
    Thus, by Lemma~\ref{lem:cantor-weak} we have
    $\phi^*(\cdot,\abs{b}) \in L^{r,\infty}(\Omega)$ with
    $r:= \frac{(d-\frD)(q-1)}{\alpha +(d-\frD-1)q}$. Now, $r>1$ is
    equivalent to~$q > d-\frD + \alpha = p_0 + \alpha$, which is true
    by choice of~$p_0$.
  \item (Super-dimensional) $p_0=
    \frac{d-\frD}{1-\frD} > d$. 
    \begin{align*}
      \phi^*(x,\abs{b})
      &\lesssim
        \tfrac 1{q'} \abs{
        \bar{x}}^{\frac{-\alpha}{q-1}}
        \abs{\bar{x}}^{(1-d)q'} \indicator_{\set{d(x_d,\frC_\lambda) \leq
        4\abs{\bar{x}}}}.
    \end{align*}
    Thus, by Lemma~\ref{lem:cantor-weak} (applied to~$m=1$) we have
    $\phi^*(\cdot,\abs{b}) \in L^{r,\infty}(\Omega)$ with
    $r:= \frac{(d-\frD)(q-1)}{\alpha+(d-1)q}$. Now, $r>1$ is
    equivalent to
    $q > \frac{\alpha+d-\frD}{1-\frD} = \frac{\alpha}{1-\frD} +
    p_0$. Since $p_0 =\frac{d-\frD}{1-\frD}$, we have
    $1-\frD = \frac{d-1}{p_0-1}$. Thus, $r>1$ is equivalent to
    $q> p_0 + \alpha \frac{d-1}{p_0-1}$, which is true by choice
    of~$p_0$.
  \end{enumerate}
  We have proved that $\psi(\cdot,\abs{b}) \in L^{r,\infty}(\Omega)$
  some~$r>1$, which proves $\psi(\cdot,\abs{b}) \in L^1(\Omega)$. Due
  to~\eqref{eq:double-phi*-psi} this implies
  $\phi^*(\cdot,\abs{b}) \in L^1(\Omega)$ and
  therefore~$b \in L^\phidx(\Omega)$. Overall, we have verified
  Assumption~\ref{ass:HneqW}.
  
  Since $p,q \in (1,\infty)$, it follows that~$\phi$ and $\phi^*$ satisfy
  the~$\Delta_2$ condition. Thus Assumption~\ref{ass:gap} also holds.

  Let $s,t>0$ (to be fixed later).  By~\eqref{eq:supp-a} we have
  \begin{align*}
    \mathcal{F}(tu) &= t^p \mathcal{F}(u).
  \end{align*}
  Moreover, by~\eqref{eq:double-phi*-psi} we have
  \begin{align*}
    \mathcal{F}^*(sb)
    &\leq  \int_\Omega \psi(x,s\abs{b})\,dx
      = s^{q'}  \int_\Omega \psi(x,\abs{b})\,dx.
  \end{align*}
  Thus, we have
  \begin{align*}
    \mathcal{F}(tu) +
    \mathcal{F}^*(sb)
    &\leq t^p \mathcal{F}(u) + s^{q'}  \int_\Omega \psi(x,\abs{b})\,dx.
  \end{align*}
  Since~$p<p_0 < q$, we get
  \begin{align*}
    \mathcal{F}(tu) +
    \mathcal{F}^*(sb)
    &\leq \frac 1{p_0} t^{p_0} \bigg( p_0 t^{p-p_0}
      \mathcal{F}(u)\bigg)
      + \frac{1}{p_0} s^{p_0'} \bigg( p_0' s^{q'-p_0'}
      \int_\Omega \psi(x,\abs{b})\,dx \bigg).
  \end{align*}
  Now, fix $s:= t^{p_0-1}$. Then for suitable large~$t$ (and
  therefore large~$s$) we obtain
  \begin{align*}
    \mathcal{F}(tu) +
    \mathcal{F}^*(sb)
    &\leq \tfrac{1}{p_0} t^{p_0} \cdot \tfrac 12 +
                \tfrac{1}{p_0'} s^{p_0'} \tfrac 12 = \tfrac 12 ts < ts,
  \end{align*}
  where we have used~$p<p_0$ and $q' < p_0'$.  This proves
  Assumption~\ref{ass:harmonic}.

  Overall, we have constructed $u$, $b$, and $p$ such that the
  Assumptions~\ref{ass:HneqW}, \ref{ass:gap} and~\ref{ass:harmonic}
  holds.  Now, the claim follows from the results from
  Theorem~\ref{thm:HneqW}, \ref{thm:gap} and~\ref{thm:harmonic} of
  Section~\ref{sec:consequences}.
\end{proof}

\begin{remark}
  Theorem~\ref{thm:main-double-phase} shows that the dimensional
  threshold is not important for the presence of the Lavrentiev gap
  and the non-density of smooth functions. (Recall that the previous
  examples needed $p<d< d+\alpha < p$ crossing the dimension.)

  Since we have overcome the dimensional threshold, it might be
  surprising that we obtain different conditions on~$p$ and $q$
  for~$p\leq d$ and $p>d$. Therefore, let us explain in the following
  that our conditions are sharp:

  Consider first the case~$p \leq d$. In this case we get Lavrentiev
  gap for~$q> p+\alpha$. Now, it has been shown in~\cite{ColMin15}
  that if~$h$ is a bounded minimizer of~$\mathcal{F}$ and
  $q \leq p+\alpha$, then~$h$ is automatically in~$W^{1,q}(\Omega)$.
  Since
  $W^{1,q}(\Omega)=H^{1,q}(\Omega) \embedding H^{1,\phix}(\Omega)$ it
  follows that~$h \in H^{1,\phix}(\Omega)$ and there is no Lavrentiev
  gap. This shows that our condition~$q>p+\alpha$ is sharp.  The
  boundeness of the minimizer is a reasonable assumption due to the
  maximum principle.  Also note that our function~$u$ is also
  bounded. This is reflected by the fact that functions
  in~$W^{1,\phix}(\Omega)$ can always be approximated by
  $L^\infty(\Omega) \cap W^{1,\phix}(\Omega)$ functions by means of
  truncation.

  Now, consider the case~$p > d$. In this case it has been shown in
  \cite[Theorem~1.4]{BarColMin18} that if~$h$ is a minimizer
  of~$\mathcal{F}$,~$h \in C^{0,\gamma}(\overline{\Omega})$ and
  $q \leq p +\frac{\alpha}{1-\gamma}$, then~$h$ is automatically
  in~$W^{1,q}(\Omega)$. Again
  $W^{1,q}(\Omega)=H^{1,q}(\Omega) \embedding H^{1,\phix}(\Omega)$
  implies that~$h \in H^{1,\phix}(\Omega)$ and there is no Lavrentiev
  gap. Now, in our example we constructed a
  function~$u \in \mathcal{C}^{0,\frD}(\overline{\Omega})$ with
  $\frD = \frac{p_0-d}{p_0-1}$. Thus, we can compare our condition
  $q > p + \alpha \frac{p-1}{d-1}$ for the Lavrentiev gap with
  $q \leq p+\frac{\alpha}{1-\gamma}$ for~$\gamma := \frD$ for the
  absence of the Lavrentiev gap. Now,
  $p+ \frac{\alpha}{1-\frD} = p + \alpha\frac{p_0-1}{d-1}$.  Since
  $p_0$ can be chosen close to~$p$ this shows, that our
  condition~$q > p + \alpha\frac{p-1}{d-1}$ is sharp.
\end{remark}

\begin{remark}
  Fonseca, Mal{\'y} and Mingione studied in~\cite{FonMalMin04} the
  size of possible singular sets of minimizer of the double phase
  potential. For $1 < p < d < d+\alpha < q < \infty$ they constructed
  a weight~$a$ such that the singular set has Hausdorff dimension
  larger than $d-p-\epsilon$.

  Let us compare this to our result. Since $p<d$, we can choose
  $p_0 = p+\delta$ with~$\delta>0$ small. Thus, our function~$u$ has a
  singular set of Hausdorff
  dimension~$\mathcal{D}= d-p_0= d-p-\delta$. In particular, we obtain
  a singular set of the same size. Note however, that our function~$u$
  is not a minimizer yet, but we expect that we can use~$u$ as a
  competitor to find a minimizer with a singular set of same Hausdorff
  dimension. We will work on this question in a future project. Using
  out method we hope to overcome the dimensional threshold $p < d <
  d+\alpha < q$ from~\cite{FonMalMin04}.
\end{remark}

% \comment[Lars]{
%   Things to do:
%   \begin{enumerate}
%   \item Cite some papers on the absences of the Lavrentiev gap. In
%     particular, \cite{ColMin15} and~\cite{BarColMin18}
%   \item For $p>d$ we have $u \in C^{0,\gamma}$. Compare this to the
%     result of Baroni-Colombo-Mingione [Theorem~4] who showed that
%     this implies a relaxed (positive) bound
%     $q \leq p + \frac{\alpha}{1-\gamma}$.

%     YES, it works: Let $p>d$, then
%     $u \in C^{0,\frD}$. Our 
%     negative condition is $p < p_0$ and
%     $q > p_0 + \frac{\alpha}{1-\frD}$. Thus it is sharp!!!!
%   \item 
%     Note that $u \in W^{1,\phi(\cdot)} \embedding W^{1,p} \embedding
%     C^{0,\gamma}$ with $\gamma = 1 - \frac{d}{p}$. Now, if we put
%     this~$\gamma$ into $q < p + \frac{\alpha}{1-\gamma}$ we obtain $q
%     < p + \frac{\alpha p}{d}$. This would mean, we need to
%     choose~$\frD = \gamma = 1 - \frac{d}{p}$. So it is sharp in some sense!

%     (To be sharp in general, We need to check if
%     $p < \frac{d-\frD}{1-\frD}$ is still valid: It is
%     $p \stackrel{!}{<} \frac{dp-p+d}{d}$, which is equivalent to
%     $p<d$, which fails.)
%   \item Cite \cite{Mar89}. It is not about gap but unboundedness of minimizers.
%   \end{enumerate}
  
% }

\subsection{\texorpdfstring{Weighted $p$-Energy}{Weighted p-Energy}}
\label{ssec:weighted-p-energy}

In this section we study the model with weighted $p$-energy. In
particular, we assume that
\begin{align*}
  \phi(x,t) &= \tfrac 1p a(x) t^p =\tfrac 1p (\omega(x) t)^p
\end{align*}
where $1 < p < \infty$ and weights $a, \omega \geq 0$ (almost
everywhere). The corresponding energy is
\begin{align*}
  \mathcal{F}(w) &= \int_\Omega \tfrac 1p a(x) \abs{\nabla w}^p \,dx = \int_\Omega
                   \tfrac 1p (\omega(x) \abs{\nabla w})^p\,dx. 
\end{align*}
\begin{definition}
  The weight~$a(x)$ belongs to the Muckenhoupt class~$A_p$ if
  \begin{align*}
    \sup \left(\frac {1} {\abs{Q}} \int_Q a(x)\, dx \right)\left(\frac {1} {\abs{Q}} \int_Q (a(x))^{-\frac{1}{p-1}}\, dx \right)^{p-1}\le K,
  \end{align*}
  where the supremum is taken over all cubes~$Q$.
\end{definition}

Our main result of the weighted $p$-energy is the following:
\begin{theorem}
  \label{thm:main-weighted-p}
  Let~$\Omega=(-1,1)^d$ with $d\geq 2$ and $1< p < \infty$.  Then
  there exists weights $a^-$ and $a^+$ of Muckenhoupt class~$A_p$ and
  another weight~$a$ with $a^+ \leq a \leq a^+$ such that the
  following holds: 
  \begin{enumerate}
  \item \label{itm:main-weighted-p1} $H^{1,\phix}(\Omega) \neq W^{1,\phix}(\Omega)$ and
    $H_0^{1,\phix}(\Omega) \neq W_0^{1,\phix}(\Omega)$.
  \item \label{itm:main-weighted-p2} There exists a linear, continuous
    functional~$\mathcal{S}^\circ\,:\, W^{1,\phix}(\Omega) \to \setR$ such
    that~$\mathcal{G} := \mathcal{F} + \mathcal{S}^\circ$
    has a Lavrentiev gap, i.e.
    \begin{align*}
      \inf \mathcal{G}\big(W^{1,\phix}_0(\Omega) \big)
      &<
        \inf \mathcal{G}\big((H^{1,\phix}_0(\Omega)\big) =0.
    \end{align*}
  \item \label{itm:main-weighted-p3} The notions of $\phix$-harmonic
    functions with respect to $W^{1,\phix}$ and $H^{1,\phix}$ differ.
  \end{enumerate}
  It is possible to choose either~$a$ or $\frac{1}{a}$ bounded.
\end{theorem}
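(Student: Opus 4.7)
The plan is to follow the same template as Theorems~\ref{thm:main-px} and~\ref{thm:main-double-phase}: choose a parameter $p_0 \in (1,\infty)$ (with $p_0\neq p$ in one of the two directions, depending on which of $a,\tfrac1a$ we wish to keep bounded), take $u,b$ from Definition~\ref{def:fractal-examples} and $\mathcal{S}^\circ$ from~\eqref{eq:defS}, build a weight $a$ tailored to the (disjoint) supports of $\nabla u$ and $b$ given by Proposition~\ref{pro:est-uAb}, verify Assumptions~\ref{ass:HneqW},~\ref{ass:gap} and~\ref{ass:harmonic}, and conclude by invoking Theorems~\ref{thm:HneqW}, \ref{thm:gap} and~\ref{thm:harmonic}.

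The construction of $a$ mirrors the weight construction in the proof of Theorem~\ref{thm:main-double-phase}. Using Lemma~\ref{lem:smooth-indicator} we pick a smooth cut-off $\rho_a$ adapted to the $b$-cone, and set $a$ to behave like a suitable power of the natural radial distance ($|x_d|$ in the matching and sub-dimensional cases, $|\bar{x}|$ in the super-dimensional case) on the $b$-cone, while being small (or $\sim 1$) on the $\nabla u$-cone. For the ``$a$ bounded'' alternative we choose $p_0$ so that $b\in L^{p'}(\Omega)$ holds without a weight (Corollary~\ref{cor:ub-p0}), and then let $a$ vanish gently on the $\nabla u$-cone; conversely, for ``$1/a$ bounded'' we choose $p_0$ so that $\nabla u\in L^p(\Omega)$ holds unweighted and let $a$ blow up on the $b$-cone. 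The envelopes $a^\pm$ are standard one-variable power weights of the form $|x_d|^{\beta}$ or $d(\bar{x},\frC_\lambda^{m})^{\beta}$, which lie in $A_p$ for $\beta$ inside the standard Muckenhoupt ranges; Lemma~\ref{lem:cantor-estimates} ensures that distance-to-fractal powers satisfy the doubling property needed for the $A_p$ estimate.

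With $u,b$ and $a$ in place, the integrability estimates $\int_\Omega a\,|\nabla u|^p\,dx<\infty$ and $\int_\Omega a^{-1/(p-1)}|b|^{p'}\,dx<\infty$ follow by the same computations as in the sub- and super-dimensional parts of the proof of Theorem~\ref{thm:main-double-phase}, combining Proposition~\ref{pro:est-uAb} with the weak-type bounds of Lemma~\ref{lem:cantor-weak}. This verifies Assumption~\ref{ass:HneqW}; Assumption~\ref{ass:gap} is automatic since $\phi(x,t)=\tfrac1p a(x)t^p$ satisfies $\Delta_2$ and $\nabla_2$. For Assumption~\ref{ass:harmonic} we use $p$-homogeneity, $\phi(x,tu)=t^p\phi(x,u)$ and $\phi^*(x,sb)=s^{p'}\phi^*(x,b)$, to write
\begin{align*}
  \mathcal{F}(tu)+\mathcal{F}^*(sb) &= t^p\mathcal{F}(u)+s^{p'}\mathcal{F}^*(b);
\end{align*}
setting $s:=t^{p-1}$ and $t$ large makes the right-hand side $\leq \tfrac12 ts<ts$, as in the previous two theorems.

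The main obstacle is the fine-tuning of $a$. Because $\nabla u$ and $b$ sit exactly at the borderline of $L^{p_0}$ and $L^{p_0'}$ integrability, the exponents entering the definition of $a$ must be chosen to simultaneously gain integrability on both sides while keeping $a$ sandwiched between two genuine $A_p$ weights; this balance is where the fractal dimension relation $p_0=d-\frD$ or $p_0=(d-\frD)/(1-\frD)$ is used, exactly as in the sub- and super-dimensional parts of Theorem~\ref{thm:main-double-phase}. Finally, the one-sided boundedness of $a$ or $1/a$ is achieved by letting only one of the two ``slopes'' in the construction of $a$ be nonzero.
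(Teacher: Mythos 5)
Your skeleton (take $u,b$ from Definition~\ref{def:fractal-examples}, build $a$ adapted to the disjoint supports of $\nabla u$ and $b$, verify Assumptions~\ref{ass:HneqW}, \ref{ass:gap}, \ref{ass:harmonic}, invoke Theorems~\ref{thm:HneqW}, \ref{thm:gap}, \ref{thm:harmonic}) is the same as the paper's, and your treatment of Assumptions~\ref{ass:HneqW} and~\ref{ass:gap} is essentially correct. But your verification of Assumption~\ref{ass:harmonic} fails, and this is a genuine gap: for the weighted $p$-energy the integrand is \emph{exactly} $p$-homogeneous, so with $s=t^{p-1}$ you get $\mathcal{F}(tu)+\mathcal{F}^*(sb)=t^p\big(\mathcal{F}(u)+\mathcal{F}^*(b)\big)$ while $ts=t^p$; the ratio is independent of $t$, and letting $t\to\infty$ gains nothing. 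The analogous trick in Theorems~\ref{thm:main-px} and~\ref{thm:main-double-phase} worked only because there the homogeneity degrees on the supports of $\nabla u$ and $b$ were strictly below $p_0$ and $p_0'$ (the factors $t^{p^--p_0}$, $s^{(p^+)'-p_0'}$, resp.\ $t^{p-p_0}$, $s^{q'-p_0'}$, tend to zero); here the exponents match exactly, so \eqref{eq:ass_FF*} is a genuine smallness condition on $\mathcal{F}(u)$ and $\mathcal{F}^*(b)$ that finiteness alone does not give. The paper fixes this by building a free small parameter $\epsilon$ into the weight on the $\nabla u$-cone ($\omega^-=\epsilon\abs{x_d}^\beta$, resp.\ $\epsilon\abs{\bar{x}}^\beta$), so that $\mathcal{F}(tu)+\mathcal{F}^*(sb)\le c_1(\epsilon t)^p+c_2 s^{p'}$ with $c_1,c_2$ independent of $\epsilon$, and then chooses $t=1$, $s=\epsilon^{p-1}$, $\epsilon^p=\tfrac{1}{2(c_1+c_2)}$ to get $\le\tfrac12 ts<ts$. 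Without this rescaling of $a$ on the $u$-side (which is harmless for the $A_p$ sandwich and the boundedness alternatives), part~\ref{itm:main-weighted-p3} of the theorem is not established by your argument.

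Two smaller points. First, your claim that envelopes of the form $d(\bar{x},\frC^{m}_\lambda)^{\beta}$ are $A_p$ weights ``by the doubling property from Lemma~\ref{lem:cantor-estimates}'' is unjustified and also unnecessary: the paper sandwiches $a$ between the pure power weights $\epsilon^p\abs{x_d}^{\beta p}$ and $\abs{x_d}^{\alpha p}$ (resp.\ powers of $\abs{\bar{x}}$ in the super-dimensional case), whose $A_p$ membership is classical for $-\tfrac1p<\alpha<\beta<\tfrac1{p'}$ (resp.\ $-\tfrac{d-1}{p}<\alpha<\beta<\tfrac{d-1}{p'}$). Second, the ``fine-tuning'' you allude to is made precise in the paper by a critical exponent $\gamma$ strictly between the two weight powers, with $\gamma=1-\tfrac{p_0}{p}$ for $p_0\le d$ and $\gamma=\tfrac{d-1}{p_0-1}\big(1-\tfrac{p_0}{p}\big)$ for $p_0>d$; the strict inequalities $\alpha<\gamma<\beta$ are what make the weak-type bounds of Lemma~\ref{lem:cantor-weak} land in $L^{r,\infty}$ with $r>1$ on both sides. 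Your one-sided variant (only one nonzero power, the other side constant) is admissible for the existence statement, but it still needs the multiplicative $\epsilon$ on the $u$-side for Assumption~\ref{ass:harmonic}.
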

\begin{proof}
  Let $\alpha,\beta, \gamma$ be such that
  \begin{align*}
    -\frac{1}{p} < \alpha < \gamma < \beta < \frac{1}{p'}. 
  \end{align*}
  If $\alpha,\beta \geq 0$, then $a$ will be bounded. If
  $\alpha, \beta\leq 0$, then $\frac{1}{a}$ will be bounded
  
  If $\gamma = 1-\frac{d}{p}$, let $p_0 := d$. If
  $\gamma \in (1-\frac{d}{p},\frac 1{p'})$, choose $p_0 \in (1,d)$
  (and hence~$p_0 = d-\frD$) such that $\gamma = 1-\frac{p_0}{p}$. If
  $\gamma \in (-\frac{d-1}{p},1 -\frac{d}{p})$ choose~$p_0>d$ (and
  hence~$p_0 = \frac{d-\frD}{1-\frD}$) such that
  $\gamma = \frac{d-1}{p_0-1}(1-\frac{p_0}{p})= \frac{d-1}{p}
  \frac{p-p_0}{p_0-1}$. Moreover, let~$\epsilon>0$ be another
  parameter. To obtain~\ref{itm:main-weighted-p3}, we have to
  choose~$\epsilon>0$ later small.

  Now, let~$u,b$ be as in Definition~\ref{def:fractal-examples} and
  $\mathcal{S}^\circ$ as in~\eqref{eq:defS}. For our construction and
  proof we distinguish the three cases $p_0=d$, $p_0 < d$ and $p_0>d$.

  \begin{enumerate}
  \item 
    We begin with the case of matching the dimension~$p_0=d$.
    Let~$\theta_a \in C^\infty_0((0,\infty))$ be such that
    $\indicator_{(2,\infty)} \leq \theta_a \leq \indicator_{(\frac 12,
      \infty)}$ and $\norm{\theta_a'}_\infty \le 6$. We  define
    \begin{align*}
      \omega^-(x) &:= \epsilon\,\abs{x_d}^\beta,
      \\
      \omega^+(x) &:= \abs{x_d}^\alpha,
      \\
      \omega(x) &:= \omega^-(x)
                  \theta_a\bigg(\frac{\abs{x_d}}{\abs{\bar{x}}}\bigg) +
                  \omega^+(x) \bigg( 1 -
                  \theta_a\bigg(\frac{\abs{x_d}}{\abs{\bar{x}}}\bigg) \bigg).
    \end{align*}
    We can assume that~$\epsilon \leq d^{\frac{\beta-\alpha}{2}}$, so
    that $\omega^- \leq \omega \leq \omega^+$. The weights~$\omega^-$
    and $\omega^+$ are of Muckenhoupt class~$A_p$, since
    $\alpha, \beta \in (-\frac 1p, \frac{1}{p'})$.  It follows by
    Proposition~\ref{pro:est-uAb} that
    \begin{align}
      \label{eq:supp_omega1}
      \set{\nabla u \neq 0} \subset \set{\omega= \epsilon \abs{x_d}^\beta}
      \quad \text{and} \quad \set{b \neq 0} \subset \set{\omega
      =\abs{x_d}^\alpha},
    \end{align}
    and
    \begin{align*}
      \phi(x,\abs{\nabla u}) &\lesssim  \epsilon^p\, \big( \abs{x_d}^\beta
                               \abs{x_d}^{-1} \big)^p
                               \indicator_{\set{
                               2 \abs{x_d} \leq \abs{\bar{x}} \leq 4
                               \abs{x_d}}},
      \\
      \phi^*(x,\abs{b}) &\lesssim \big( \abs{x_d}^{-\alpha}
                          \abs{x_d}^{1-d} \big)^{p'} \indicator_{\set{
                          2 \abs{x_d} \leq \abs{\bar{x}} \leq 4 \abs{x_d}}}.
    \end{align*}
    Thus, by Lemma~\ref{lem:cantor-weak} we have
    $\phi(\cdot,\abs{\nabla u}) \in L^{r,\infty}(\Omega)$ with
    $r=\frac{d}{(1-\beta)p}>1$ using $\beta > \gamma=1-\frac{d}{p}$.
    This proves~$\nabla u \in L^\phix(\Omega)$.  Moreover, by
    Lemma~\ref{lem:cantor-weak} we have
    $\phi^*(\cdot,\abs{b}) \in L^{s,\infty}(\Omega)$ with
    $s=\frac{d}{(\alpha+d-1)p'}>1$ using
    $\alpha < \gamma=1-\frac{d}{p}$.  This
    proves~$b \in L^\phidx(\Omega)$.
    
  \item 
    We continue with the sub-dimensional case $1< p_0 < d$.  Let
    $\rho_a \in C^\infty(\setR^d \setminus \frS)$ be such that (using
    Lemma~\ref{lem:smooth-indicator})
    \begin{enumerate}
    \item
      $\indicator_{\set{d(\bar{x},\frC^{d-1}_\lambda) \leq \frac 12 \abs{x_d}}}
      \leq \rho_a \leq \indicator_{\set{d(\bar{x},\frC^{d-1}_\lambda)
          \leq 2 \abs{x_d}}}$.
    \item
      $\abs{\nabla \rho_a} \lesssim \abs{x_d}^{-1}
      \indicator_{\set{\frac 12 \abs{x_d} \leq d(\bar{x},\frC^{d-1}_\lambda) \leq
          2 \abs{x_d}}}$.
    \end{enumerate}
    and define
    \begin{align*}
      \omega^-(x) &:= \epsilon\,\abs{x_d}^\beta,
      \\
      \omega^+(x) &:= \abs{x_d}^\alpha,
      \\
      \omega(x) &:= \omega^-(x)\, \rho_a(x) + \omega^+(x)(1-\rho_a)(x).
    \end{align*}  
    We can assume that~$\epsilon \leq d^{\frac{\beta-\alpha}{2}}$, so
    that $\omega^- \leq \omega \leq \omega^+$. The weights~$\omega^-$
    and $\omega^+$ are of Muckenhoupt class~$A_p$, since
    $\alpha, \beta \in (-\frac 1p, \frac{1}{p'})$.  It follows by
    Proposition~\ref{pro:est-uAb} that
    \begin{align}
      \label{eq:supp_omega2}
      \set{\nabla u \neq 0} \subset \set{\omega=\epsilon \abs{x_d}^\beta}
      \quad \text{and} \quad \set{b \neq 0} \subset \set{\omega
      =\abs{x_d}^\alpha},
    \end{align}
    and
    \begin{align*}
      \phi(x,\abs{\nabla u})
      &\lesssim \epsilon^p\,(\abs{x_d}^\beta \abs{x_d}^{-1})^p \indicator_{\set{2
        \abs{x_d} \leq d(\bar{x}, \frC^{d-1}_\lambda) \leq 4
        \abs{x_d}}},
              \\
            \phi^*(x,\abs{b})
      &\lesssim (\abs{x_d}^{-\alpha} \abs{x_d}^{\frD + 1-d})^{p'}
        \indicator_{\set{d(\bar{x}, \frC^{d-1}_\lambda) \leq \frac 12 \abs{x_d}}}.
    \end{align*}
    Thus, by Lemma~\ref{lem:cantor-weak} we have
    $\phi(\cdot,\abs{\nabla u}) \in L^{r,\infty}(\Omega)$ with
    $r=\frac{d-\frD}{(1-\beta)p}>1$ using
    $\beta > \gamma = 1 - \frac{d-\frD}p = 1 - \frac{p_0}{p}$. This
    proves~$\nabla u \in L^\phix(\Omega)$. Moreover, by
    Lemma~\ref{lem:cantor-weak} we have
    $\phi^*(\cdot,\abs{b}) \in L^{s,\infty}(\Omega)$ with
    $s=\frac{d-\frD}{d-1-\frD+\alpha}>1$ using
    $\alpha < \gamma = 1 - \frac{d-\frD}p = 1 - \frac{p_0}{p}$. This
    proves~$b \in L^\phidx(\Omega)$.
  \item 
    Let us turn to the super-dimensional case~$p_0 > d$.  Let
    $\rho_a \in C^\infty(\setR^d \setminus \frS)$ be such that (using
    Lemma~\ref{lem:smooth-indicator})
    \begin{enumerate}
    \item
      $\indicator_{\set{d(x_d,\frC_\lambda) \leq \frac 1 2 \abs{\bar{x}}}}
      \leq \rho_a \leq \indicator_{\set{d(x_d,\frC_\lambda)
          \leq 2 \abs{\bar{x}}}}$.
    \item
      $\abs{\nabla \rho_a} \lesssim \abs{\bar{x}}^{-1}
      \indicator_{\set{\frac 12 \abs{\bar{x}} \leq
          d(x_d,\frC_\lambda) \leq 2 \abs{\bar{x}}}}
      $
    \end{enumerate}
    and define
    \begin{align*}
      \omega^-(\bar{x}) &:= \epsilon\,\abs{\bar{x}}^\beta,
      \\
      \omega^+(x) &:= \abs{\bar{x}}^\alpha,
      \\
      \omega(x) &:= \omega^-(x)\, (1-\rho_a)(x) + \omega^+(x) \rho_a(x).
    \end{align*}
    The weights~$\omega^-$ and $\omega^+$ are of Muckenhoupt
    class~$A_p$, since $\alpha, \beta \in (-\frac{d-1}p, \frac{d-1}{p'})$.
    It follows by Proposition~\ref{pro:est-uAb} that
    \begin{align*}
      \label{eq:supp_omega3}
      \set{\nabla u \neq 0} \subset \set{\omega=\epsilon \abs{\bar{x}}^\beta}
      \quad \text{and} \quad \set{b \neq 0} \subset \set{\omega
      =\abs{\bar{x}}^\alpha},
    \end{align*}
    and
    \begin{align*}
      \phi(x,\abs{\nabla u})
      &\lesssim \epsilon^p (\abs{\bar{x}}^\beta \abs{\bar{x}}^{\frD-1})^p \indicator_{\set{2
        \abs{\bar{x}} \leq d(x_d, \frC_\lambda) \leq 4
        \abs{\bar{x}}}},
      \\
      \phi^*(x,\abs{b})
      &\lesssim (\abs{\bar{x}}^{-\alpha} \abs{\bar{x}}^{1-d})^{p'}
        \indicator_{\set{d(x_d, \frC^{d-1}_\lambda) \leq \frac 12 \abs{\bar{x}}}}.
    \end{align*}
    Thus, by Lemma~\ref{lem:cantor-weak} we have
    $\phi(\cdot,\abs{\nabla u}) \in L^{r,\infty}(\Omega)$ with
    $r=\frac{d-\frD}{(1-\frD-\alpha)p}>1$ using
    $\beta > \gamma = 1-\frD -\frac{d-\frD}{p} =
    \frac{d-1}{p_0-1}(1-\frac{p_0}{p})$. This
    proves~$\nabla u \in L^\phix(\Omega)$. Moreover, by
    Lemma~\ref{lem:cantor-weak} we have
    $\phi^*(\cdot,\abs{b}) \in L^{s,\infty}(\Omega)$ with
    $s=\frac{d-\frD}{(d-1+\beta)p'}>1$ using
    $\beta < \gamma = 1-\frD -\frac{d-\frD}{p} =
    \frac{d-1}{p_0-1}(1-\frac{p_0}{p})$. This
    proves~$b \in L^\phidx(\Omega)$.
  \end{enumerate}
  Now $1<p<\infty$ ensures that $\phi$ and $\phi^*$ satisfy the
  $\Delta_2$-condition.

  Overall, we have constructed $u$, $b$, and $p$ such that the
  Assumptions~\ref{ass:HneqW} and \ref{ass:gap} hold.

  We are now going to verify Assumption~\ref{ass:harmonic}. This will
  be the step, where we have to choose~$\epsilon>0$ small enough. It
  suffices to prove Assumption~\ref{ass:harmonic} in the case
  of~$p_0 < d$. (The other case are completely analogous.)
  
  Let $s,t>0$ (to be fixed later).  By~\eqref{eq:supp_omega2} we have
  \begin{align*}
    \mathcal{F}(tu) &\lesssim (\epsilon t)^p \int_\Omega
                      (\abs{x_d}^\beta \abs{x_d}^{-1})^p \indicator_{\set{2
                      \abs{x_d} \leq d(\bar{x}, \frC^{d-1}_\lambda) \leq 4
                      \abs{x_d}}}
                      \,dx.
  \end{align*}
  Moreover, by~\eqref{eq:double-phi*-psi} we have
  \begin{align*}
    \mathcal{F}^*(sb)
    &\lesssim s^{p'} \int_\Omega 
      (\abs{x_d}^{-\alpha} \abs{x_d}^{\frD + 1-d})^{p'}
      \indicator_{\set{d(\bar{x}, \frC^{d-1}_\lambda) \leq \frac 12
      \abs{x_d}}}\,dx.
  \end{align*}
  Thus, there exist~$c_1,c_2>0$ such that
  \begin{align*}
    \mathcal{F}(tu) +
    \mathcal{F}^*(sb)
    &\leq c_1 (\epsilon t)^p + c_2 s^{p'}.
  \end{align*}
  Now, the choice $t:=1$, $s:= \epsilon^{p-1}$ and
  $\epsilon = (\frac{1}{2 (c_1 +c_2)})^{\frac 1p}$ implies
  \begin{align*}
    \mathcal{F}(tu) +
    \mathcal{F}^*(sb)
    &\leq \epsilon^p\, (c_1+c_2) = \tfrac 12 ts < ts.
  \end{align*}
  This proves Assumption~\ref{ass:harmonic}.

  Overall, we have constructed $u$, $b$, and $p$ such that the
  Assumptions~\ref{ass:HneqW}, \ref{ass:gap} and~\ref{ass:harmonic}
  holds.  Now, the claim follows from the results from
  Theorem~\ref{thm:HneqW}, \ref{thm:gap} and~\ref{thm:harmonic} of
  Section~\ref{sec:consequences}.
\end{proof}

\begin{remark}
  \label{rem:zhikov-barrier}
  Note, that Zhikov has also provided in~\cite[Section 5.3]{Zhi98}
  another example for~$H^{1,\phix}(\Omega) \neq W^{1,\phi}(\Omega)$
  for the model of the weighted $p$-energy for~$p=2$. He was
  interested in an example, where the weight is bounded.  Since, the
  checker board example does not work in this case, he constructed an
  example using fractals. For this he split the domain~$\Omega$ into
  two parts~$\Omega_1$ and~$\Omega_0$ separated them by another
  part~$\mathcal{N}$ consisting of a Cantor-necklace, see
  Figure~\ref{fig:zhikov-barrier}. He then constructed a 
  weight~$a\in L^{\infty}(\Omega)$ with $\frac 1a \in L^1(\Omega)$ and a
  function~$u \in (L^\infty(\Omega) \cap W^{1,\phix}(\Omega))
  \setminus H^{1,\phix}(\Omega)$ with $u = 0$ on~$\Omega_0$ and~$u=1$
  on~$\Omega_1$. The weight was chosen such that it is integrable on
  each block of the necklace and scaled by the size on the smaller
  blocks. So the weight becomes smaller on smaller blocks. His
  example is contained in the class of possible weights from
  Theorem~\ref{thm:main-weighted-p}.
\end{remark}

\newcommand{\zhikovbarrier}{%
        \fill[white] (-1/3,0) -- (0,1/3) -- (1/3,0) -- (0,-1/3) -- cycle;
      \draw (-1/3,0) -- (0,1/3) -- (1/3,0) -- (0,-1/3) -- cycle;
%      \fill[pattern=grid] (-1/3,0) -- (0,1/3) -- (1/3,0) -- (0,-1/3)
%      -- cycle;
      \fill[opacity=0.15] (-1/3,0) -- (0,1/3) -- (1/3,0) -- (0,-1/3)
      -- cycle;
}%

\newcommand{\barrierpattern}{%
  %% main pattern, level 0
  \zhikovbarrier

  \begin{scope}[shift={(-2/3,0)},scale=1/3]
    \zhikovbarrier
  \end{scope}
  \begin{scope}[shift={(2/3,0)},scale=1/3]
    \zhikovbarrier
  \end{scope}
  \begin{scope}[shift={(-2/3-2/9,0)},scale=1/9]
    \zhikovbarrier
  \end{scope}
  \begin{scope}[shift={(2/3+2/9,0)},scale=1/9]
    \zhikovbarrier
  \end{scope}
  \begin{scope}[shift={(-2/3+2/9,0)},scale=1/9]
    \zhikovbarrier
  \end{scope}
  \begin{scope}[shift={(2/3-2/9,0)},scale=1/9]
    \zhikovbarrier
  \end{scope}
}

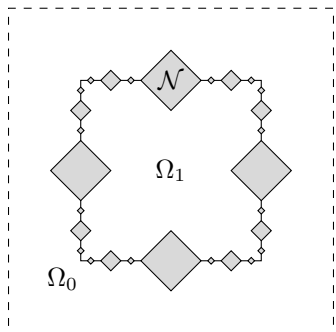
\begin{figure}[!ht]
  \centering

  \begin{tikzpicture}[scale=1.2]
    \draw[dashed] (-1.8,-1.8) rectangle (1.8,1.8);
    \draw (-1,-1) rectangle (1,1);
    \begin{scope}[shift={(0,1)}]
      \barrierpattern
    \end{scope}
    \begin{scope}[shift={(0,-1)}]
      \barrierpattern
    \end{scope}
    \begin{scope}[shift={(1,0)},rotate=90]
      \barrierpattern
    \end{scope}
    \begin{scope}[shift={(-1,0)},rotate=90]
      \barrierpattern
    \end{scope}
    \node at (0,0) {$\Omega_1$};
    \node at (-1.2,-1.2) {$\Omega_0$};
    \node at (0,1) {$\mathcal{N}$};
  \end{tikzpicture}
  
  \caption{Zhikov's fractal barrier}
  \label{fig:zhikov-barrier}
\end{figure}

\end{document}